\documentclass[11pt]{amsart}
\usepackage{mathrsfs,latexsym,amsfonts,amssymb}
\usepackage{hyperref}
\setcounter{page}{1} \setlength{\textwidth}{14.6cm}
\setlength{\textheight}{22.5cm} \setlength{\evensidemargin}{0.8cm}
\setlength{\oddsidemargin}{0.8cm} \setlength{\topmargin}{0.8cm}

\newtheorem{theorem}{Theorem}[section]
\newtheorem{lemma}[theorem]{Lemma}
\newtheorem{corollary}[theorem]{Corollary}
\newtheorem{question}[theorem]{Question}
\newtheorem{remark}[theorem]{Remark}
\theoremstyle{definition}
\newtheorem{definition}[theorem]{Definition}
\newtheorem{proposition}[theorem]{Proposition}

\newtheorem{problem}[theorem]{Problem}

\begin{document}

\title[A note on hyperspaces by closed sets with Vietoris topology]
{A note on hyperspaces by closed sets with Vietoris topology}

\author{Chuan Liu}
\address{(Chuan Liu)Department of Mathematics,
Ohio University Zanesville Campus, Zanesville, OH 43701, USA}
\email{liuc1@ohio.edu}

 \author{Fucai Lin}
 \address{(Fucai Lin) 1. School of mathematics and statistics, Minnan Normal University, Zhangzhou 363000, P. R. China; 2. Fujian Key Laboratory of Granular Computing and Application, Minnan Normal University, Zhangzhou 363000, P. R. China}
 \email{linfucai@mnnu.edu.cn; linfucai2008@aliyun.com}

 \thanks{The second author is supported by the Key Program of the Natural Science Foundation of Fujian Province (No: 2020J02043), the NSFC (No. 11571158), the lab of Granular Computing, the Institute of Meteorological Big Data-Digital Fujian and Fujian Key Laboratory of Data Science and Statistics.}

\keywords{hyperspace, countable set-tightness, compact metrizable, $\gamma$-space, weakly first-countable, $D_{1}$-space, $D_{0}$-space}%insert keywords
\subjclass[2010]{54B20; 54D20}%insert subject class

%\date{\today}
\begin{abstract}
For a topological space $X$, let $CL(X)$ be the set of all non-empty closed subset of $X$, and denote the set $CL(X)$ with the Vietoris topology by $(CL(X),
\mathbb{V})$. In this paper, we mainly discuss the hyperspace $(CL(X),
\mathbb{V})$ when $X$ is an infinite countable discrete space. As an application, we first prove that the hyperspace with the Vietoris topology on an infinite countable discrete space contains a closed copy of $n$-th power of Sorgenfrey line for each $n\in\mathbb{N}$.
Then we investigate the tightness of the hyperspace $(CL(X),
\mathbb{V})$, and prove that the tightness of $(CL(X),
\mathbb{V})$ is equal to the set-tightness of $X$. Moreover, we extend some results about the generalized metric properties on the hyperspace $(CL(X),
\mathbb{V})$. Finally, we give a characterization of $X$ such that $(CL(X),
\mathbb{V})$ is a $\gamma$-space.

\end{abstract}

\maketitle

\section{Introduction}
It is well known that the topics of the hyperspace has been the
focus of much research, see \cite{GM2016, HPZ2003, HL1997, K1978, LSL2021, LLL2021, M1951, NP2015,
N1985, PS2017, TLL2018}. There are many results on the hyperspace
$CL(X)$ of closed subsets of a topological space equipped with
various topologies. In this paper, we endow $CL(X)$ with the Vietoris topology $\mathbb{V}$, or the
so-called finite topology, the base of which consists of all subsets
of the following form: $$\langle U_1, ..., U_k\rangle =\{K\in CL(X): K\subset
{\bigcup}_{i=1}^k U_i\ \mbox{and}\ K\cap U_j\neq \emptyset, 1\leq
j\leq k\},$$where each $U_i$ is open in $X$ and $k\in\mathbb{N}$. We denote the hyperspace $CL(X)$ with Vietoris topology by $(CL(X),
\mathbb{V})$. In 1997, Hol\'{a} and Levi in \cite{HL1997} gave a characterization of the first countability of $(CL(X),
\mathbb{V})$; in 2002, Hol\'{a}, Pelant and Zsilinszky in \cite{HPZ2003} proved that $(CL(X),
\mathbb{V})$ is developable iff $(CL(X),
\mathbb{V})$ is Moore iff $(CL(X),
\mathbb{V})$ is metrizable iff $(CL(X),
\mathbb{V})$ has a $\sigma$-discrete network iff $X$ is compact and metrizable. So it is natural for us to
consider the following two problems:

\begin{problem}\label{p6}
Let $\mathcal{C}$ be a proper subclass of the class of first-countable spaces, and let $\mathcal{P}$ be a topological property. If $(CL(X),
\mathbb{V})\in\mathcal{C}$, does $X$ have the property $\mathcal{P}$?
\end{problem}

\begin{problem}\label{p7}
Let $\mathcal{C}$ be a class of generalized metrizable spaces. If $(CL(X),
\mathbb{V})\in\mathcal{C}$, is $X$ compact and metrizable?
\end{problem}

The paper is organized as follows. In Section 2, we introduce the
necessary notation and terminology which are used in
the paper. In Section 3, we mainly discuss the hyperspace $(CL(D(\omega)),
\mathbb{V})$, and prove that $(CL(D(\omega)), \mathbb{V})$ contains a closed copy of $\mathbb{S}^{n}$ for each $n\in\mathbb{N}$, where $\mathbb{S}$ is the Sorgenfrey line. In Section 4,  we prove that the tightness of $(CL(X),
\mathbb{V})$ is equal to the set-tightness of $X$; moreover, we give a characterization of $(CL(X),
\mathbb{V})$ which is Fr\'echet-Urysohn. In section 5, we give some answers to Problems~\ref{p6} and~\ref{p7} respectively. In particular, we prove that $(CL(X), \mathbb{V})$ is quasi-developable iff $(CL(X), \mathbb{V})$ is a semi-stratifiable space iff $(CL(X), \mathbb{V})$ is symmetrizable iff $(CL(X), \mathbb{V})$ is a $D_{1}$-space iff $X$ is compact and metrizable; moreover, we prove that $(CL(X), \mathbb{V})$ is a $\gamma$-space iff $X$  is a
separable metrizable space and $S(X)$ is compact, where $S(X)$ is the set of all non-isolated points of $X$.

\smallskip
\section{Preliminaries}
In this paper, the base space $X$ is always supposed to be regular. Let $\mathbb{N}$ and $\omega$ denote the sets of all positive integers and all non-negative integers, respectively. Let $\mathbb{S}$ be the real line endowed with half open interval topology, that is, Sorgenfrey line. For a space $X$, $S(X)$ is the set of all non-isolated points of $X$. For undefined notations and terminologies, the reader may refer to \cite{E1989},  \cite{G1984} and \cite{M1951}.

Let $X$ be a topological space and $A \subseteq X$ be a subset of $X$.
The \emph{closure} of $A$ in $X$ is denoted by $\overline{A}$. A subset $P$ of $X$ is called a
\emph{sequential neighborhood} of $x \in X$, if each
sequence converging to $x$ is eventually in $P$. A subset $U$ of
$X$ is called \emph{sequentially open} if $U$ is a sequential neighborhood of
each of its points. A subset $F$ of
$X$ is called \emph{sequentially closed} if $X\setminus F$ is sequentially open. The space $X$ is called a \emph{sequential space} if each
sequentially open subset of $X$ is open. The space $X$ is said to be {\it Fr\'{e}chet-Urysohn} if, for
each $x\in \overline{A}\subset X$, there exists a sequence
$\{x_{n}\}$ in $A$ such that $\{x_{n}\}$ converges to $x$.

\begin{definition}
Let $\mathscr P$ be a cover of a space $X$ such that (i) $\mathscr P
=\bigcup_{x\in X}\mathscr{P}_{x}$; (ii) for each $x\in X$, if
$U,V\in\mathscr{P}_{x}$, then $W\subseteq U\cap V$ for some $W\in
\mathscr{P}_{x}$; (iii) $x\in \bigcap\mathscr{P}_{x}$ for each $x\in
X$; and (iv) for each point $x\in X$ and each open neighborhood $U$
of $x$ there is some $P\in\mathscr P_x$ such that $x\in P \subseteq
U$.

\smallskip
$\bullet$ The family $\mathscr{P}$ is called a {\it weak base} for
$X$ if, for every $G\subset X$, the set $G$ must be open in $X$
whenever for each $x\in G$ there exists $P\in \mathscr{P}_{x}$ such
that $P\subset G$, and $X$ is {\it weakly first-countable} if $X$ has a
weak base $\mathscr P$ and $\mathscr{P}_{x}$ is countable for each
$x\in X$.
\end{definition}

\begin{definition}
Let $\mathscr P$ be a family of subsets of a space $X$. The family $\mathscr P$ is called a {\it $k$-network} if
for every compact subset $K$ of $X$ and an arbitrary open set $U$
containing $K$ in $X$ there is a finite subfamily $\mathscr
{P}^{\prime}\subseteq\mathscr {P}$ such that $K\subseteq
\bigcup\mathscr {P}^{\prime}\subseteq U$.
\end{definition}

A space $X$ is said to be \emph{La\v{s}nev} if it is the
continuous closed image of some metric space. The following La\v{s}nev space in Definition~\ref{d00} plays an
important role in the study of the generalized metric theory.

\begin{definition}\label{d00}
Let $\kappa$ be an infinite cardinal.
  For each $\alpha\in\kappa$, let $T_{\alpha}$ be a sequence converging to
  $x_{\alpha}\not\in T_{\alpha}$. Let $T=\bigoplus_{\alpha\in\kappa}(T_{\alpha}\cup\{x_{\alpha}\})$ be the topological sum of $\{T_{\alpha}
  \cup \{x_{\alpha}\}: \alpha\in\kappa\}$. Then
  $S_{\kappa}=\{x\}  \cup \bigcup_{\alpha\in\kappa}T_{\alpha}$
  is the quotient space obtained from $T$ by
  identifying all the points $x_{\alpha}\in T$ to the point $x$. The space $S_{\kappa}$ is called a {\it sequential fan}.
\end{definition}

The following space is not a La\v{s}nev space.

\begin{definition}\label{d01}
A space $X$ is called an \emph{ $S_{2}$}-{space} ({\it Arens'
space})  if
$$X=\{\infty\}\cup \{x_{n}: n\in \mathbb{N}\}\cup\{x_{n, m}: m, n\in
\omega\}$$ and the topology is defined as follows: Each $x_{n, m}$
is isolated; a basic neighborhood of $x_{n}$ is
$\{x_{n}\}\cup\{x_{n, m}: m>k\}$, where $k\in\omega$; a basic
neighborhood of $\infty$ is $$\{\infty\}\cup (\bigcup\{V_{n}:
n>k\})\ \mbox{for some}\ k\in \omega,$$ where $V_{n}$ is a
neighborhood of $x_{n}$ for each $n\in\omega$.
\end{definition}

Given a topological space $X$, we define its {\it hyperspace} as the
following set:

$$CL(X)=\{H: H\ \mbox{is non-empty, closed in}\ X\}.$$

We endow $CL(X)$ with {\it Vietoris topology}
defined as the topology generated by the following family $$\{\langle U_1, \ldots,
U_k\rangle: U_1, \ldots, U_k\ \mbox{are open subsets of}\ X, k\in
\mathbb{N}\},$$ where  $\langle U_1, ..., U_k\rangle =\{H\in
CL(X): H\subset {\bigcup}_{i=1}^k U_i$ and $H\cap U_j\neq \emptyset,
1\leq j\leq k\}$. We denote this hyperspace with Vietoris topology by $(CL(X),
\mathbb{V})$.

If $U$ is a subset of $X$, then $$U^{-}=\{H\in CL(X): H\cap U\neq
\emptyset\}$$ and $$U^{+}=\{H\in CL(X): H\subset U\}.$$ Sometimes, we denote $U^{-}$ by $U^{-X}$ in order to prevent the confusion.

Let $X$ be a space. The {\it closed set character} (resp. {\it compact set character}) of $X$ is the minimal cardinal $\tau\geq\omega$ such that for each closed (resp. compact) set $A$ of $X$ the cardinal of the character of $A$ in $X$ is at most $\tau$. The closed set character (resp. compact set character) of $X$ is denoted by $cl\chi(X)$ (resp. $co\chi(X)$). If $cl\chi(X)=\omega$, then $X$ is called a {\it $D_1$-space} \cite{A1966} if $\sup\{\chi(H): H\in CL(X)\}\leq \omega$; if $co\chi(X)=\omega$, then $X$ is called a {\it $D_0$-space} \cite{S1975} if $\sup\{\chi(H): H\ \mbox{is compact in}\ X\}\leq \omega$. Clearly, each $D_1$-space is a $D_0$-space.

\smallskip
\section{The topological properties of hyperspace on an infinite countable discrete space}
In this section, we mainly discuss the topological properties of hyperspace on an infinite countable discrete space. First, we recall a concept.

A proper subset $C$ of the rational number $\mathbb{Q}$ is called a {\it cut} if $C$ has no largest
element and $(-\infty, p]\cap \mathbb{Q}\subset C$ for each $p\in C$.
If $C, D$ are cuts and $C$ is a proper subset of $D$, then denoted by
$C<D$.

In this paper, we always denote any countable infinite discrete space by $D(\omega)$.
The following lemma is a simple modification of \cite[Theorem 4.11]{HP2002}.

\begin{lemma}\label{l1}
The hyperspace $(CL(D(\omega)),
\mathbb{V})$ contains a closed copy of Sorgenfrey line $\mathbb{S}$.
\end{lemma}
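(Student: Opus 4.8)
The plan is to realize $\mathbb{S}$ concretely inside $CL(D(\omega))$ using cuts of $\mathbb{Q}$. First I would identify $D(\omega)$ with the rationals $\mathbb{Q}$ carrying the discrete topology; this is harmless, since any two countably infinite discrete spaces are homeomorphic, and it lets me exploit the linear order of $\mathbb{Q}$. Because $\mathbb{Q}$ is discrete, every subset is clopen, so $CL(\mathbb{Q})$ is just the collection of all non-empty subsets of $\mathbb{Q}$, and a basic Vietoris neighbourhood $\langle U_{1},\ldots,U_{k}\rangle$ may use arbitrary sets $U_{i}\subseteq\mathbb{Q}$. For each real $r$ put $C_{r}=\{q\in\mathbb{Q}:q<r\}$; this is a non-empty cut, and $r\mapsto C_{r}$ is a bijection of $\mathbb{R}$ onto the family $L$ of all non-empty cuts. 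I would take as the desired closed copy the family
$$\mathcal{F}=L\cup\{\mathbb{Q}\}=\{H\in CL(\mathbb{Q}): H\ \text{is downward closed and has no largest element}\}.$$

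Next I would compute the subspace topology. The core local computation is that, for a cut $C_{r}$ and rationals $q_{1},\ldots,q_{m}<r$, the trace on $L$ of the basic neighbourhood $\langle C_{r},\{q_{1}\},\ldots,\{q_{m}\}\rangle$ equals $\{C_{s}:\max_{i}q_{i}<s\le r\}$: the term $U_{1}=C_{r}$ forces $C_{s}\subseteq C_{r}$, i.e.\ $s\le r$, while the singletons force $s>q_{i}$. Conversely, given any Vietoris neighbourhood $\langle U_{1},\ldots,U_{k}\rangle$ of $C_{r}$, the upper condition $C_{s}\subseteq\bigcup_{i}U_{i}$ is automatic for $s\le r$ (as $C_{s}\subseteq C_{r}$), and choosing witnesses $p_{j}\in C_{r}\cap U_{j}$ shows that every $C_{s}$ with $\max_{j}p_{j}<s\le r$ lies in the neighbourhood. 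Hence the sets corresponding to half-open intervals $(a,r]$ form a neighbourhood base at $C_{r}$, so $L$ carries the upper-limit (reverse Sorgenfrey) topology and $L\cong\mathbb{S}$. This part is essentially \cite[Theorem 4.11]{HP2002}.

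The heart of the matter---and the point the ``modification'' must supply---is closedness, since $L$ itself is not closed: for $C_{n}=\{q\in\mathbb{Q}:q<n\}$ one checks $C_{n}\to\mathbb{Q}$ in $\mathbb{V}$, so $\mathbb{Q}\in\overline{L}\setminus L$. I would therefore show that $\mathcal{F}=L\cup\{\mathbb{Q}\}$ is closed. If $H\in CL(\mathbb{Q})\setminus\mathcal{F}$, then either (i) $H$ has a largest element $M$, and then $\langle(-\infty,M]\cap\mathbb{Q},\{M\}\rangle$ is a neighbourhood of $H$ all of whose members have maximum $M$, hence miss $\mathcal{F}$; or (ii) $H$ is not downward closed, so there are $q<p$ with $p\in H$, $q\notin H$, and then $\langle\mathbb{Q}\setminus\{q\},\{p\}\rangle$ is a neighbourhood of $H$ whose members are not downward closed, hence miss $\mathcal{F}$. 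Thus $\mathcal{F}$ is closed. Finally, analysing the neighbourhoods of the extra point $\mathbb{Q}$ shows they are precisely the final segments $\{C_{s}:s>s_{0}\}\cup\{\mathbb{Q}\}$, so $\mathcal{F}$ is homeomorphic to $(-\infty,+\infty]$ with the upper-limit topology; after reversing the order this is the half-line $[0,\infty)$ with the Sorgenfrey topology, which splits into clopen copies of $[0,1)$ with the Sorgenfrey topology, and since $\mathbb{S}$ is homeomorphic to such a half-open interval one concludes $\mathcal{F}\cong\mathbb{S}$. The only genuinely delicate step is this last one: verifying that adjoining the single limit point $\mathbb{Q}$ repairs closedness without changing the homeomorphism type.
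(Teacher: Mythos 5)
Your proof is correct, and it follows the same basic strategy as the paper---realizing $\mathbb{S}$ as the family of cuts of the discrete rationals, with the identification of the subspace topology essentially as in \cite[Theorem 4.11]{HP2002}---but it diverges at exactly one point, and in doing so it repairs a genuine error in the paper's own argument. The paper takes $\mathbb{X}$ to be the family of \emph{proper} cuts (its definition of cut requires $C\subsetneq\mathbb{Q}$) and claims $\mathbb{X}$ is closed, arguing that any non-member either has a largest element or fails to be downward closed. That dichotomy is not exhaustive: $C=\mathbb{Q}$ satisfies neither, and, as you observe via $C_{n}=\{q\in\mathbb{Q}:q<n\}\to\mathbb{Q}$ (a Vietoris neighbourhood $\langle U_{1},\ldots,U_{k}\rangle$ of $\mathbb{Q}$ only constrains its members to meet each $U_{i}$, which any sufficiently large cut does), the point $\mathbb{Q}$ lies in $\overline{\mathbb{X}}\setminus\mathbb{X}$, so the paper's $\mathbb{X}$ is in fact \emph{not} closed. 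Your repair---enlarging the family to $\mathcal{F}=L\cup\{\mathbb{Q}\}$, for which the two-case analysis (largest element, or not downward closed) really is exhaustive, and then checking that the added point turns $L\cong(\mathbb{R},\text{upper-limit})$ into $(-\infty,+\infty]$ with the upper-limit topology, still homeomorphic to $\mathbb{S}$ by the standard decomposition into countably many clopen half-open Sorgenfrey intervals---is complete, and all the individual verifications (the trace computations at $C_{r}$ and at $\mathbb{Q}$, the two separation cases, and the final homeomorphism $[0,\infty)_{\mathbb{S}}\cong\mathbb{S}$) are sound. An equally workable alternative fix is to shrink rather than enlarge: the bounded cuts $\{C_{r}:r\le 0\}$ form a closed copy of $[0,\infty)$ with the Sorgenfrey topology, since a neighbourhood such as $\langle\{5\},\mathbb{Q}\rangle$ separates $\mathbb{Q}$ from them and the paper's two cases handle the remaining non-members. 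Either way, the lemma as stated is true, but the paper's proof needs an amendment along the lines you supply.
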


\begin{proof}
Let $\mathbb{Q}$ be the set of rational number with the discrete
topology; then $D(\omega)$ is homeomorphic to $\mathbb{Q}$. Therefore, we may assume that $D(\omega)$ is $\mathbb{Q}$. Let $$\mathbb{X}=\{C\in CL(D(\omega)): C\ \mbox{is a cut}\}.$$ It was proved
that the subspace $\mathbb{X}$ of $(CL(D(\omega)), \mathbb{V})$ is
homeomorphic to the Sorgenfrey line by \cite[Theorem 4.11]{HP2002}.
Now we only rove that $\mathbb{X}$ is closed in $(CL(D(\omega)), \mathbb{V})$. Take any $C\in CL(D(\omega))\setminus \mathbb{X}$; then $C$ is not a cut. Hence $C$ has a largest element or there exist $p\in C$ such that $((-\infty, p]\cap
\mathbb{Q})\setminus C\neq \emptyset$. In order to find an open neighborhood $\widehat{U}$ of $C$ in $CL(D(\omega))$ such that $\widehat{U}\cap \mathbb{X}=\emptyset$, we divide the proof into the following two cases.

\smallskip
{\bf Case 1}: $C$ has a largest element $p$.

\smallskip
Then $p\in C$ such that $r\leq p$ for any $r\in C$. Clearly, $\langle C, \{p\}
\rangle$ is an open neighborhood of $C$ in $(CL(D(\omega)),
\mathbb{V})$, hence it easily follows that $\langle C, \{p\} \rangle\cap
\mathbb{X}=\emptyset$. Now put $\widehat{U}=\langle C, \{p\} \rangle$, as desired.

\smallskip
{\bf Case 2}: There exist $p\in C$ such that $((-\infty, p]\cap
\mathbb{Q})\setminus C\neq \emptyset$.

\smallskip
Pick any $q\in ((-\infty, p]\cap \mathbb{Q})\setminus C$; then $q<p$. Clearly, $\langle
C, \{p\}\rangle$ is an open neighborhood of $C$. We claim that $\langle C,
\{p\}\rangle\cap \mathbb{X}=\emptyset$. Indeed, if not, there exists a cut $D\in
\mathbb{X}$ such that $p\in D$ and $D\subset C$, then $q\in D$ since $q<p$
and $D$ is a cut. This is a contradiction since $q\notin C$. Now put $\widehat{U}=\langle C, \{p\} \rangle$, as desired.

\smallskip
Therefore, it follow from Cases 1 and 2 that $\mathbb{X}$ is closed in $(CL(D(\omega)), \mathbb{V})$.
\end{proof}

\begin{proposition}\label{pro}
Let $X$ be a space and $X=\bigoplus_{i\in\mathbb{N}}X_{i}$, where $X_{i}\cap X_{j}=\emptyset$ for any distinct $i$ and $j$. Then the box product $\prod_{i\in\mathbb{N}}(CL(X_{i}),
\mathbb{V})$ is homeomorphic to a closed
subspace of $(CL(X),\mathbb{V})$.
\end{proposition}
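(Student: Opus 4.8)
The plan is to realise the box product inside $(CL(X),\mathbb V)$ through the natural \emph{union map}
\[
\Phi\colon \prod_{i\in\mathbb N}(CL(X_i),\mathbb V)\to (CL(X),\mathbb V),\qquad \Phi\big((H_i)_{i\in\mathbb N}\big)=\bigcup_{i\in\mathbb N}H_i .
\]
First I would check that $\Phi$ is well defined with a concrete image. Since each $X_i$ is clopen in the topological sum $X$, every $H_i$ (closed in $X_i$) is closed in $X$, and because a subset of a topological sum is closed exactly when its trace on each summand is closed, $\bigcup_i H_i$ is closed; it is non-empty because each $H_i\neq\emptyset$. Thus $\Phi$ takes values in the set
\[
\mathcal M=\{H\in CL(X): H\cap X_i\neq\emptyset\ \text{for every}\ i\in\mathbb N\},
\]
and the assignment $H\mapsto (H\cap X_i)_{i\in\mathbb N}$ is a two-sided inverse, each $H\cap X_i$ being a non-empty closed subset of $X_i$ by clopenness of $X_i$ and membership in $\mathcal M$; here disjointness of the $X_i$ is also used. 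Hence $\Phi$ is a bijection onto $\mathcal M$.

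Next I would show that $\mathcal M$ is closed in $(CL(X),\mathbb V)$. This is the easy half: its complement is $\bigcup_{i\in\mathbb N}(X\setminus X_i)^{+}$, and each $(X\setminus X_i)^{+}$ is Vietoris-open because $X\setminus X_i$ is open; a union of open sets is open, so $\mathcal M$ is closed.

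It then remains to prove that $\Phi$ is a homeomorphism onto $\mathcal M$. Continuity of $\Phi$ is straightforward from the subbase $\{U^{-},U^{+}\}$ of $\mathbb V$: one computes $\Phi^{-1}(U^{+})=\prod_{i}(U\cap X_i)^{+}$, which is a basic box-open set, and $\Phi^{-1}(U^{-})=\bigcup_i\pi_i^{-1}\big((U\cap X_i)^{-}\big)$, which is open, where $\pi_i$ denotes the $i$-th projection.

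The main obstacle is showing that $\Phi$ is open, equivalently that $\Phi^{-1}$ is continuous, since this is where the box topology must be matched exactly. For a basic box-open set $\prod_i\langle U^i_1,\dots,U^i_{k_i}\rangle$ I would compute its $\Phi$-image as the trace on $\mathcal M$ of the single upper condition $H\subseteq\bigcup_{i,j}U^i_j$ together with the lower conditions $H\cap U^i_j\neq\emptyset$. The pleasant point is that all the upper bounds collapse into one Vietoris datum $\big(\bigcup_{i,j}U^i_j\big)^{+}$; the delicate point, and the crux of the whole proposition, is to reconcile the lower conditions, which range over all coordinates simultaneously, with the Vietoris neighbourhood base, whose members impose only finitely many meeting requirements at once. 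I expect this matching of the per-coordinate meeting conditions against membership in $\mathcal M$ to be the step requiring the most care, and would organise the argument around producing, for each $H$ in the image and each box-basic neighbourhood, a Vietoris-basic neighbourhood of $H$ whose trace on $\mathcal M$ lands inside that image.
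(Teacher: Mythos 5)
Your construction is exactly the paper's: the union map $\Phi$ onto $\mathcal{M}=\{H\in CL(X): H\cap X_i\neq\emptyset\ \text{for all}\ i\}$ (the paper calls it $\mathbb{X}'$), and your verifications that $\mathcal{M}$ is closed, that $\Phi$ is a bijection, and that $\Phi$ is continuous (this is where the box topology enters, via $\Phi^{-1}(U^{+})=\prod_{i}(U\cap X_i)^{+}$) are all correct and coincide with the paper's. However, the step you deferred --- openness of $\Phi$ --- is not merely the delicate point: it is false, so the argument cannot be completed along these lines. Take each $X_i$ to be a countably infinite discrete space (precisely the situation in which the paper later applies this proposition), and split each $X_i$ into two disjoint nonempty open sets $U^i_1,U^i_2$. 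Then $\mathcal{W}=\prod_{i\in\mathbb{N}}\langle U^i_1,U^i_2\rangle$ is a box-basic neighbourhood of the point $(X_i)_{i\in\mathbb{N}}$, and $\Phi(\mathcal{W})=\{H\in\mathcal{M}: H\cap U^i_1\neq\emptyset\ \text{and}\ H\cap U^i_2\neq\emptyset\ \text{for all}\ i\}$. This set is not a neighbourhood of $X=\Phi\bigl((X_i)_{i}\bigr)$ in $\mathcal{M}$: if $\langle W_1,\dots,W_k\rangle$ is any basic Vietoris neighbourhood of $X$, then $\bigcup_{j\le k}W_j=X$; choosing $w_j\in W_j$ for $j\le k$ and $u_i\in U^i_1$ for each $i$, the closed set $K=\{w_1,\dots,w_k\}\cup\{u_i: i\in\mathbb{N}\}$ lies in $\mathcal{M}\cap\langle W_1,\dots,W_k\rangle$ but meets $U^i_2$ for at most $k$ values of $i$, so $K\notin\Phi(\mathcal{W})$. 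This is exactly the obstruction you sensed: a Vietoris-basic set imposes only finitely many meeting conditions, and membership in $\mathcal{M}$ (one meeting condition per summand, with the whole sets $X_i$) cannot simulate infinitely many meeting conditions with smaller open sets. Hence the trace topology on $\mathcal{M}$ is strictly coarser than the box topology; worse, $\mathcal{M}$ need not be homeomorphic to the box product by \emph{any} map: if each $X_i$ is a two-point discrete space, the box product is an uncountable discrete space, while in $\mathcal{M}$ the point $X$ is not isolated (same computation as above), so $\mathcal{M}$ is not discrete.

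You should know that the paper's own proof contains the identical gap, only less visibly: its openness step considers just products in which every factor is a single subbasic set $V_i^{-X_i}$ or $V_i^{+}$ (these do not form a base of the box topology --- no nonempty such product fits inside $\mathcal{W}$ above), and the displayed image $\bigcap_{i\in B}V_i^{-X}\cap(\cdots)^{+}\cap\mathbb{X}'$ is, for infinite $B$, an infinite intersection of open sets which is asserted to be open; the example above shows it is not. So your proposal reproduces the paper's argument together with its fatal flaw, though unlike the paper you correctly isolated where the danger lies. What does survive in both write-ups is the finite case: if $X=X_1\oplus\dots\oplus X_n$, all meeting conditions are finite in number, $\Phi$ maps $\prod_{i\le n}\langle U^i_1,\dots,U^i_{k_i}\rangle$ onto $\langle U^i_j: i\le n,\ j\le k_i\rangle\cap\mathcal{M}$, and one gets a genuine closed embedding of the finite (= Tychonoff = box) product; this suffices for the later corollary that $(CL(D(\omega)),\mathbb{V})$ contains closed copies of $\mathbb{S}^{n}$ for each $n\in\mathbb{N}$, but the infinite box product statement would require a different witness subspace or a different argument altogether.
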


\begin{proof}
Let $$\mathbb{X}'=\{H\in CL(X): H\cap X_{i}\neq \emptyset, i\in\mathbb{N}\}.$$ We claim that $\mathbb{X}'$ is a closed subspace of
$(CL(X),\mathbb{V})$. Indeed, take any $K\in CL(X)\setminus\mathbb{X}'$; then
$K\cap X_{i}=\emptyset$ for some $i\in\mathbb{N}$. Put $Y=\bigcup_{j\in\mathbb{N}\setminus\{i\}}X_{j}$. Then $Y^{+}$ is a
neighborhood of $K$ and $Y^{+}\cap \mathbb{X}'=\emptyset$.  Now we prove that the box product $\prod_{i\in\mathbb{N}}(CL(X_{i}),
\mathbb{V})$ is
homeomorphic to $\mathbb{X}'$.

Indeed, define the mapping $f: \prod_{i\in\mathbb{N}}(CL(X_{i}),
\mathbb{V})\to \mathbb{X}'$ by $f(\prod_{i\in\mathbb{N}}C_{i})=\bigcup_{i\in\mathbb{N}}C_{i}$ for any $\prod_{i\in\mathbb{N}}C_{i}\in \prod_{i\in\mathbb{N}}(CL(X_{i}),
\mathbb{V})$. Clearly, $f$ is a bijection. Next it suffices to prove that $f$ is an open continuous mapping.

\smallskip
(1) The mapping $f$ is continuous.

\smallskip
Take any nonempty open subset $V$ of $X$. Then there exists a subset $A\subset \mathbb{N}$ such that $V\cap X_{n}\neq\emptyset$ for each $n\in A$ and $V\cap X_{m}\neq\emptyset$ for each $m\in \mathbb{N}\setminus\setminus A$. Then $$f^{-1}(V^{-X}\cap \mathbb{X}')=\bigcup_{i\in A}\left((V\cap X_{i})^{-X_{i}}\times \prod_{j\in\mathbb{N}\setminus\{i\}}X_{j}^{+}\right),$$ and then $$f^{-1}(V^{+})=\prod_{i\in\mathbb{N}}(X_{i}\cap V)^{+}$$ if $A=\mathbb{N}$. Hence $f$ is
continuous.

\smallskip
(2) The mapping $f$ is open.

\smallskip
Let $V_i\subset X_{i}$ be a nonempty open subset of
$X_{i}$ for each $i\in\mathbb{N}$. For any subset $B\subset \mathbb{N}$, we have
$$f(\prod_{i\in B}V_{i}^{-X_{i}}\times \prod_{j\in \mathbb{N}\setminus B}V_{j}^{+})=\bigcap_{i\in B}V_{i}^{-X}\cap (\bigcup_{i\in B}X_{i}\cup\bigcup_{j\in \mathbb{N}\setminus B}V_{j})^{+}\cap \mathbb{X}'.$$

Therefore, $f$ is a homeomorphism.
\end{proof}

By Proposition~\ref{pro}, we have the following theorem.

\begin{theorem}\label{tt}
The hyperspace $(CL(D(\omega)),
\mathbb{V})$ contains a closed copy of the box product $\prod_{n\in\mathbb{N}}\mathbb{S}_{n}$, where each $\mathbb{S}_{n}$ is homeomorphic the Sorgenfrey line $\mathbb{S}$.
\end{theorem}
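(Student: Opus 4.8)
The plan is to combine Proposition~\ref{pro} with Lemma~\ref{l1} via a suitable decomposition of $D(\omega)$. Since $D(\omega)$ is a countably infinite discrete space, I would first fix a partition $D(\omega)=\bigoplus_{i\in\mathbb{N}}X_{i}$ into countably many pairwise disjoint pieces, each $X_{i}$ being itself countably infinite and discrete (for instance, index $D(\omega)$ by $\mathbb{N}\times\mathbb{N}$ and set $X_{i}=\{i\}\times\mathbb{N}$). In the discrete topology this is literally a topological sum, so the hypotheses of Proposition~\ref{pro} are met, and each $X_{i}$ is homeomorphic to $D(\omega)$.

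Next I would invoke Proposition~\ref{pro} to identify the box product $\prod_{i\in\mathbb{N}}(CL(X_{i}),\mathbb{V})$ with a closed subspace of $(CL(D(\omega)),\mathbb{V})$. Since each $X_{i}\cong D(\omega)$, Lemma~\ref{l1} supplies, for every $i$, a closed copy $\mathbb{S}_{i}$ of the Sorgenfrey line inside $(CL(X_{i}),\mathbb{V})$. The aim is then to form the box product of these closed copies and realize $\prod_{i\in\mathbb{N}}\mathbb{S}_{i}$ as a closed subspace of $\prod_{i\in\mathbb{N}}(CL(X_{i}),\mathbb{V})$.

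The key point to verify---and the only step requiring an argument---is that in the box topology a product of closed subspaces is closed in the product: if $F_{i}$ is closed in $Y_{i}$ for each $i$, then $\prod_{i}F_{i}$ is closed in $\prod_{i}Y_{i}$. This is straightforward, since any point $(y_{i})$ outside $\prod_{i}F_{i}$ misses some $F_{i_{0}}$, and a basic box-open set of the form $U_{i_{0}}\times\prod_{i\neq i_{0}}Y_{i}$, with $U_{i_{0}}$ an open set separating $y_{i_{0}}$ from $F_{i_{0}}$, witnesses the openness of the complement. Applying this with $Y_{i}=(CL(X_{i}),\mathbb{V})$ and $F_{i}=\mathbb{S}_{i}$ shows that $\prod_{i\in\mathbb{N}}\mathbb{S}_{i}$ is closed in $\prod_{i\in\mathbb{N}}(CL(X_{i}),\mathbb{V})$.

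Finally I would chain the two closed embeddings: being a closed subspace is transitive, so a closed copy of $\prod_{i\in\mathbb{N}}\mathbb{S}_{i}$ lying inside the closed subspace $\prod_{i\in\mathbb{N}}(CL(X_{i}),\mathbb{V})$ of $(CL(D(\omega)),\mathbb{V})$ is itself closed in $(CL(D(\omega)),\mathbb{V})$. Since each $\mathbb{S}_{i}$ is homeomorphic to the Sorgenfrey line, this yields the desired closed copy of $\prod_{n\in\mathbb{N}}\mathbb{S}_{n}$ and completes the argument. I do not anticipate any serious obstacle; the work is entirely in assembling Proposition~\ref{pro} and Lemma~\ref{l1} correctly, with the box-closedness of a product of closed sets being the one small lemma to record along the way.
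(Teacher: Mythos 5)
Your proposal is correct and follows essentially the same route as the paper: partition $D(\omega)$ into countably many infinite pieces, apply Proposition~\ref{pro} to embed the box product $\prod_{i\in\mathbb{N}}(CL(X_i),\mathbb{V})$ as a closed subspace, and then use Lemma~\ref{l1} inside each factor. The only difference is that you explicitly record the small lemma that a box product of closed subspaces is closed in the box product (and carries the expected topology), a step the paper's proof uses implicitly; this is a welcome clarification rather than a deviation.
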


\begin{proof}
We can write $D(\omega)=\bigcup_{i\in\mathbb{N}}E_{i}$ such that each $E_{i}$ is infinite and $E_i\cap E_j=\emptyset$ for distinct $i$ and $j$. From Proposition~\ref{pro}, it follows that the box product $\prod_{i\in\mathbb{N}}(CL(E_i),
\mathbb{V})$ is homeomorphic to a closed
subspace of $(CL(D(\omega)), \mathbb{V})$. By Lemma ~\ref{l1},
each $(CL(E_{i}), \mathbb{V})$ contains a closed copy
of Sorgenfrey line, hence $(CL(D(\omega)), \mathbb{V})$
contains a closed copy of the box product $\prod_{n\in\mathbb{N}}S_{n}$.
\end{proof}

From Theorem~\ref{tt}, we easily see the following corollary.

\begin{corollary}\label{c6}
The hyperspace $(CL(D(\omega)), \mathbb{V})$ contains a closed copy of $\mathbb{S}^{n}$ for each $n\in\mathbb{N}$.
\end{corollary}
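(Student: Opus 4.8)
The plan is to realize $\mathbb{S}^{n}$ as a closed ``slice'' of the box product supplied by Theorem~\ref{tt}, and then appeal to the transitivity of the closed-subspace relation. By Theorem~\ref{tt}, the hyperspace $(CL(D(\omega)),\mathbb{V})$ contains a closed copy of $B:=\prod_{i\in\mathbb{N}}\mathbb{S}_{i}$, where each $\mathbb{S}_{i}$ is a copy of the Sorgenfrey line. Since a closed subspace of a closed subspace of $(CL(D(\omega)),\mathbb{V})$ is again closed there, it suffices to exhibit inside $B$ a closed subspace homeomorphic to $\mathbb{S}^{n}$.

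To build this slice, I would fix a point $s_{i}\in\mathbb{S}_{i}$ for each index $i>n$ and set
\[
S=\Big(\prod_{i=1}^{n}\mathbb{S}_{i}\Big)\times\prod_{i>n}\{s_{i}\}\subseteq B .
\]
The first step is to verify that $S$ is closed in the box product $B$. Given any $x=(x_{i})\in B\setminus S$, there is some index $j>n$ with $x_{j}\neq s_{j}$; because the Sorgenfrey line is $T_{1}$, the set $\mathbb{S}_{j}\setminus\{s_{j}\}$ is open, so the cylinder $W=\{y\in B:y_{j}\in\mathbb{S}_{j}\setminus\{s_{j}\}\}$ is a box-open neighborhood of $x$ disjoint from $S$. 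Hence $S$ is closed in $B$.

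The next step is to identify the subspace topology that $S$ inherits from $B$. Tracing a basic box-open set $\prod_{i\in\mathbb{N}}U_{i}$ onto $S$, I expect the trace to be empty unless $s_{i}\in U_{i}$ for every $i>n$, in which case it equals $\big(\prod_{i=1}^{n}U_{i}\big)\times\prod_{i>n}\{s_{i}\}$. Thus the nonempty traces are governed entirely by the open sets $U_{1},\dots,U_{n}$, so the topology induced on $S$ is precisely the product topology on $\prod_{i=1}^{n}\mathbb{S}_{i}$. Because the box and product topologies coincide for finitely many factors, the projection onto the first $n$ coordinates gives a homeomorphism $S\cong\mathbb{S}^{n}$.

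Combining the two steps finishes the argument: $S$ is a closed copy of $\mathbb{S}^{n}$ inside $B$, and $B$ is homeomorphic to a closed subspace of $(CL(D(\omega)),\mathbb{V})$, so the image of $S$ is a closed copy of $\mathbb{S}^{n}$ in the hyperspace. All the computations are routine; the only point that needs a moment's care is checking that fixing the tail coordinates yields a genuinely closed slice whose induced topology is the finite product topology rather than something finer, and this is exactly where the $T_{1}$ separation of $\mathbb{S}$ and the collapse of box topology to product topology for finitely many factors are used.
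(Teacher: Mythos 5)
Your proposal is correct and takes essentially the same route as the paper: the paper deduces Corollary~\ref{c6} directly from Theorem~\ref{tt} (``we easily see''), and your argument is exactly the routine verification being left implicit there. Fixing the tail coordinates, checking closedness of the slice via $T_1$, and observing that the traces of basic box-open sets induce the finite product topology on $\prod_{i=1}^{n}\mathbb{S}_{i}$ fills in that gap correctly.
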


\begin{remark}
It is well known that Sorgenfrey line $\mathbb{S}$ is a non-metrizable space which is hereditarily
Lindel\"of, hereditarily separable,
first-countable, perfect\footnote{A space $X$ is called {\it perfect} if every closed subset of $X$ is a
$G_\delta$-set.} and non-developable; moreover, it has the Baire property and a regular
$G_\delta$-diagonal. However, the square of Sorgenfrey line is not normal. Therefore, $(CL(D(\omega)), \mathbb{V})$ is not normal. Further, we have the following proposition.
\end{remark}

\begin{proposition}\label{p1}
The Sorgenfrey line $\mathbb{S}$ does not belong to any one of the following classes of spaces.
\begin{enumerate}

\smallskip
\item $\beta$-spaces\footnote{A space $(X, \tau)$ is called a {\it $\beta$-space} if there exists a function $g:
\mathbb{N}\times X\to \tau$ such that (i)
for any $x\in X$, we have $g(n+1, x)\subset g(n, x)$ for any $n\in\mathbb{N}$, (ii) for any $x\in X$ and sequence $\{x_{n}\}$ in $X$, if $x\in g(n, x_{n})$ for each $n\in\mathbb{N}$, then $\{x_{n}\}$ has an accumulation point in $X$};

\smallskip
\item spaces with a point-countable $k$-network;

\smallskip
\item spaces with a BCO\footnote{A space $X$ is said to have a {\it base of countable order} if there is a sequence
$\{\mathcal{B}_n\}$ of bases for $X$ such that: Whenever $x\in
b_n\in\mathcal{B}_n$ and $\{b_n\}$ is decreasing,
then $\{b_n: n\in \omega\}$ is a base at $x$. We use `BCO' to abbreviate
`base of countable order'.};

\smallskip
\item $p$-spaces\footnote{A regular space $X$ is called a {\it $p$-space} if there is a sequence $\{\mathscr{U}_{n}\}$ of families of open sets in $\beta X$ such that (1) each $\mathscr{U}_{n}$ covers $X$; (2) for each $x\in X$, $\bigcap_{n\in\mathbb{N}}\mbox{st}(x, \mathscr{U}_{n})\subset X$. If we also have (3) for each $x\in X$, $\bigcap_{n\in\mathbb{N}}\mbox{st}(x, \mathscr{U}_{n})=\bigcap_{n\in\mathbb{N}}\overline{\mbox{st}(x, \mathscr{U}_{n})}$, then $X$ is called a {\it strict $p$-space}.};

\smallskip
\item symmetrizable\footnote{A function $d: X\times X\rightarrow \mathbb{R}^{+}$ is called a {\it symmetric} on a set $X$ if for each $x, y\in X$, we have (1) $d(x, y)=0$ if and only if $x=y$ and (2) $d(x, y)=d(y, x)$. A space $(X, \tau)$ is called {\it symmetrizable} if there exists a symmetric $d$ on $X$ such that the topology $\tau$ given on $X$ is generated by the symmetric $d$, that is, a subset $U\in\tau$ if and only if for every $x\in U$, there is $\varepsilon>0$ such that $B(x, \varepsilon)\subset U$.}

\smallskip
\item quasi-developable spaces\footnote{A space $(X, \tau)$ is called {\it quasi-developable} if there exists a sequence $\{\mathscr{U}_{n}\}$ of families consisting of open sets in $X$ such that for each $x\in U\in\tau$ there exists $n\in\mathbb{N}$ such that $x\in\mbox{st}(x, \mathscr{U}_{n})\subset U$.};

\smallskip
\item $D_1$-spaces.
\end{enumerate}

Therefore, $(CL(D(\omega)), \mathbb{V})$ does not belong to any one of the classes of spaces (1)-(7).
\end{proposition}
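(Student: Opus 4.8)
The plan is to reduce the whole statement to the single space $\mathbb{S}$. Each of the seven classes (1)--(7) is closed-hereditary, and by Lemma~\ref{l1} the hyperspace $(CL(D(\omega)),\mathbb{V})$ contains a closed copy of $\mathbb{S}$; so it is enough to show that $\mathbb{S}$ belongs to none of (1)--(7), and the final sentence of the Proposition follows at once. Throughout I would use the facts collected in the preceding Remark: $\mathbb{S}$ is regular, first-countable, hereditarily Lindel\"of (hence paracompact and metacompact) and hereditarily separable, it has a regular $G_{\delta}$-diagonal, it is neither metrizable nor developable, and $\mathbb{S}^{2}$ is not normal. From the last fact $\mathbb{S}$ is not cosmic: a regular space with a countable network is hereditarily Lindel\"of, and countable networks multiply, so a cosmic $\mathbb{S}$ would make $\mathbb{S}^{2}$ cosmic, hence regular Lindel\"of, hence normal.

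Several classes then fall to standard theorems. As $\mathbb{S}$ is metacompact, a quasi-developable $\mathbb{S}$ would be developable (Bennett), contradicting non-developability; this settles (6). A paracompact $p$-space with a $G_{\delta}$-diagonal is metrizable, and a $T_{3}$-space with a base of countable order and a $G_{\delta}$-diagonal is developable (Wicke--Worrell); applied to $\mathbb{S}$ these contradict non-metrizability and non-developability, settling (4) and (3). For (7) I would display one closed set of uncountable character: let $A=\{-n:n\in\mathbb{N}\}$, a closed discrete set. Every neighbourhood of $A$ contains a canonical one $W_{\varepsilon}=\bigcup_{n}[-n,-n+\varepsilon_{n})$, so a countable neighbourhood base of $A$ would yield countably many positive sequences $(\varepsilon^{(m)}_{n})_{n}$ such that every positive sequence $(\delta_{n})$ satisfies $\varepsilon^{(m)}_{n}\le\delta_{n}$ for all $n$ for some $m$; the diagonal choice $\delta_{n}=\varepsilon^{(n)}_{n}/2$ is dominated by no $m$, so $\chi(A,\mathbb{S})>\omega$ and $\mathbb{S}$ is not a $D_{1}$-space.

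The crux is (1). Assume $\mathbb{S}$ is a $\beta$-space via $g$, and put $r_{n}(y)=\sup\{t:[y,t)\subseteq g(n,y)\}>y$. Call $x$ \emph{good} if for every $n$ and every $\varepsilon>0$ there is $y\in(x-\varepsilon,x)$ with $r_{n}(y)>x$. If some $x$ is good, pick $x_{n}\in(x-\tfrac1n,x)$ with $r_{n}(x_{n})>x$; then $x\in[x_{n},r_{n}(x_{n}))\subseteq g(n,x_{n})$ for all $n$, while $x_{n}\nearrow x$ strictly from the left, so $\{x_{n}\}$ has no cluster point in $\mathbb{S}$, contradicting the $\beta$-property. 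Hence every $x$ is bad, i.e. $x\in B_{m}:=\{x:r_{m}(y)\le x\ \text{for all}\ y\in(x-\tfrac1m,x)\}$ for some $m$, so $\mathbb{R}=\bigcup_{m}B_{m}$ and some $B_{m}$ is uncountable. An uncountable subset of $\mathbb{R}$ has a right-accumulation point $y_{0}$ (all but countably many of its points are such); choosing $x\in B_{m}\cap(y_{0},y_{0}+\tfrac1m)$ with $x\searrow y_{0}$ gives $r_{m}(y_{0})\le x\to y_{0}$, contradicting $r_{m}(y_{0})>y_{0}$. Thus $\mathbb{S}$ is not a $\beta$-space, and (5) follows: a first-countable symmetrizable space is semimetrizable, hence semistratifiable, hence a $\beta$-space, so the first-countable non-$\beta$-space $\mathbb{S}$ cannot be symmetrizable.

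The hardest case, and the one I expect to be the real obstacle, is (2). The plan is to contradict non-cosmicness: I would argue that a point-countable $k$-network on the Lindel\"of space $\mathbb{S}$ can be thinned to a \emph{countable} $k$-network, which is in particular a countable network, forcing $\mathbb{S}$ to be cosmic. A direct route is also available but delicate: since $\{x\}\cup\{y_{k}\}$ is compact whenever $y_{k}\searrow x$, any $k$-network member capturing infinitely many terms of such a sequence inside $[x,x+\delta)$ must have infimum \emph{exactly} $x$, so these ``tail-capturers'' are pairwise distinct and realize every real as an infimum; one then wants a counting argument, parallel to that in (1), showing that point-countability is incompatible with having to capture \emph{every} right-converging sequence at \emph{every} point (a point-countable family of fixed-shape sequences is easily built but fails to be a $k$-network, while the families that do work, such as right-neighbourhoods or dense traces, force some point into uncountably many members). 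Turning either the thinning lemma or this global counting into a clean proof is where the main work lies; the other six cases are routine once the properties of $\mathbb{S}$ are assembled.
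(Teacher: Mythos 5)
Your global strategy (prove everything for $\mathbb{S}$ and transfer to the hyperspace via Lemma~\ref{l1} and closed-heredity) is the paper's, and several of your cases are correct; in fact your direct diagonalization arguments for (1) and (7) are self-contained improvements over the paper's citations (the paper gets (1) from the theorem that a paratopological group which is a $\beta$-space is developable, and (7) from \cite[Theorem 7(4)]{DL1995}), and your cases (4)--(5) run essentially as in the paper. However, there are two genuine defects. First, item (2) is simply not proved: you defer its entire content to an unproved ``thinning lemma'' (that a point-countable $k$-network on a Lindel\"of space can be replaced by a countable one) or to an uncompleted counting argument, and you say explicitly that making either work is where the main work lies. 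The paper's proof is short: since $\mathbb{S}$ is first-countable, a point-countable $k$-network yields a point-countable base \cite{GMT1984}, and a separable regular space with a point-countable base is second countable (every nonempty basic set meets the countable dense set, and each point of that set lies in only countably many basic sets), hence metrizable --- a contradiction. Without this citation, or a proof of your thinning lemma, your case (2) is a gap, not a proof.

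Second, the theorem you invoke for (6) is false: metacompactness plus quasi-developability does not imply developability. The Michael line is paracompact (hence metacompact) and has a $\sigma$-disjoint base, hence is quasi-developable, yet it is not developable, since a paracompact developable space is metrizable and the Michael line is not. Bennett's actual theorem is that a \emph{perfect} quasi-developable space is developable; that is what you need, and it applies because $\mathbb{S}$ is perfect (it is regular and hereditarily Lindel\"of; the paper records perfectness in the Remark preceding the Proposition and cites \cite[Theorem 8.6]{G1984} for exactly this step). A similar, smaller problem occurs in (3): I cannot verify your claimed Wicke--Worrell statement that a BCO together with a $G_\delta$-diagonal implies developability; the standard facts are that a submetacompact space with a BCO is developable (Worrell--Wicke) and that a paracompact space with a BCO is metrizable, either of which finishes (3) at once because $\mathbb{S}$ is regular Lindel\"of, hence paracompact. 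With (2) repaired by the Gruenhage--Michael--Tanaka citation, and (6) and (3) rerouted through perfectness and paracompactness respectively, your proof becomes correct and matches the paper's in all but cases (1) and (7), where your arguments are a genuine, more elementary alternative.
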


\begin{proof}
(1) If the Sorgenfrey line $\mathbb{S}$ is a $\beta$-space, then it is a
Moore space\footnote{A space $(X, \tau)$ is called {\it developable} if there exists a sequence $\{\mathscr{U}_{n}\}$ of families of open covers of $X$ such that, for each $x\in X$,  $\{\mbox{st}(x, \mathscr{U}_{n})\}$ is an open neighborhood base of $x$ in $X$. A regular developable space is called a {\it Moore space}.}, hence $\mathbb{S}$ is metrizable since a
paratopological group which is a $\beta$-space is developable. This
is a contradiction.

\smallskip
(2) If the Sorgenfrey line $\mathbb{S}$ has a point-countable
$k$-network, then it has a point-countable base \cite{GMT1984}. Since a
separable space with a point-countable base is metrizable
\cite[Theorem 7.2]{G1984}, it follows that $\mathbb{S}$ is metrizable, this is
a contradiction.

\smallskip
(3) If the Sorgenfrey line $\mathbb{S}$ has a BCO, then it follows from \cite[Theorem
6.6]{G1984} that it is developable, hence it is metrizable. This is a
contradiction.

\smallskip
(4) If the Sorgenfrey line $\mathbb{S}$ is a $p$-space, then it is a
Lindel\"of $p$-space with a $G_\delta$-diagonal\footnote{A space $X$ is said to have a {\it $G_{\delta}$-diagonal} if, there is a sequence $\{\mathscr{U}_{n}\}$ of open covers of $X$, such that, for each $x\in X$, $\{x\}=\bigcap_{n\in\mathbb{N}}\mbox{st}(x, \mathscr{U}_{n})$.}, hence $\mathbb{S}$
is metrizable by \cite[Corollary 3.20, 3.4]{G1984}. This is a
contradiction.

\smallskip
(5) If the Sorgenfrey line $\mathbb{S}$ is symmetrizable, then it is a
semi-stratifiable\footnote{A space $(X, \tau)$ is called a {\it semi-stratifiable} if, there exists a function $F: \mathbb{N}\times \tau\rightarrow \tau^{c}$ satisfying the following conditions: (1) $U\in\tau\Rightarrow U=\bigcup_{n\in\mathbb{N}}F(n, U)$; (2) $V\subset U\Rightarrow F(n, V)\subset F(n, U)$.} space by \cite[Theorem 9.6]{G1984} and \cite[Theorem 9.8]{G1984}, hence a
$\beta$-space \cite[Page 475]{G1984}, this is a contradiction to
(1).

\smallskip
(6) If the Sorgenfrey line $\mathbb{S}$ is quasi-developable, then it is
developable by \cite[Theorem 8.6]{G1984}, this is a contradiction.

\smallskip
(7) If the Sorgenfrey line $\mathbb{S}$ is a $D_1$-space, then $\mathbb{S}$ is
metrizable \cite[Theorem 7(4)]{DL1995} since
$\mathbb{S}$ has a $G_\delta$-diagonal, this is a contradiction.
\end{proof}

\begin{theorem}\label{theo}
The hyperspace $(CL(D(\omega)), \mathbb{V})$ is non-archimedean quasi-metrizable; thus it is quasi-metrizable.
\end{theorem}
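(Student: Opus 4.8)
The plan is to realise $(CL(D(\omega)),\mathbb{V})$ as a set with an explicit compatible non-archimedean quasi-metric; once such a quasi-metric is exhibited, quasi-metrizability follows at once, since every non-archimedean quasi-metric is in particular a quasi-metric. Identify $D(\omega)$ with the discrete space $\omega=\{0,1,2,\dots\}$, so that $CL(D(\omega))$ is precisely the collection of all non-empty subsets of $\omega$. The first step is to pin down a convenient neighbourhood base. For $H\in CL(D(\omega))$ and a finite $F\subseteq H$, put $W(H,F)=\{K\in CL(D(\omega)):F\subseteq K\subseteq H\}$. Because $\omega$ is discrete, $H$ is open, so $H^{+}$ and each $\{x\}^{-}$ are Vietoris-open, whence $W(H,F)=H^{+}\cap\bigcap_{x\in F}\{x\}^{-}=\langle\{x\}_{x\in F},H\rangle$ is a basic Vietoris-open set containing $H$. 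Conversely, given any basic neighbourhood $\langle U_{1},\dots,U_{k}\rangle$ of $H$, choosing $x_{j}\in H\cap U_{j}$ and setting $F=\{x_{1},\dots,x_{k}\}$ yields $W(H,F)\subseteq\langle U_{1},\dots,U_{k}\rangle$. Hence $\{W(H,F):F\subseteq H\text{ finite}\}$ is a neighbourhood base at $H$, and writing $F_{n}=H\cap\{0,1,\dots,n\}$ the decreasing family $W(H,F_{n})=\{K:K\subseteq H,\ \min(H\setminus K)>n\}$ (with the convention $\min\emptyset=\infty$) is a countable neighbourhood base.

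With the base understood, I would define $d\colon CL(D(\omega))\times CL(D(\omega))\to[0,\infty)$ by
\[
d(H,K)=\begin{cases}
0,& K=H,\\
\dfrac{1}{1+\min(H\setminus K)},& \emptyset\neq K\subsetneq H,\\
1,& K\not\subseteq H.
\end{cases}
\]
Clearly $0\le d\le 1$, and $d(H,K)=0$ if and only if $K=H$, so the first quasi-metric axiom is immediate. The content of the argument is the strong (non-archimedean) triangle inequality $d(H,L)\le\max\{d(H,K),d(K,L)\}$, and this is the step I expect to be the crux. When $\max\{d(H,K),d(K,L)\}=1$ the inequality $d(H,L)\le 1$ holds automatically, so one may assume $d(H,K)<1$ and $d(K,L)<1$. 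From the definition these strict inequalities force $K\subseteq H$ and $L\subseteq K$, hence $L\subseteq K\subseteq H$, and the key algebraic identity $H\setminus L=(H\setminus K)\sqcup(K\setminus L)$ holds as a disjoint union. Consequently $\min(H\setminus L)=\min\{\min(H\setminus K),\min(K\setminus L)\}$, which translates exactly into $d(H,L)=\max\{d(H,K),d(K,L)\}$; in particular the required inequality holds (with equality) in this case as well.

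It remains to check that $d$ induces the Vietoris topology, and this falls out of the neighbourhood-base computation above. A direct evaluation of the balls gives $B(H,\tfrac{1}{n+1})=\{K:d(H,K)<\tfrac{1}{n+1}\}=\{K:K\subseteq H,\ \min(H\setminus K)>n\}=W(H,F_{n})$ for every $n\in\omega$, since $K\not\subseteq H$ is excluded ($d=1$) and for $K\subsetneq H$ the condition $\tfrac{1}{1+\min(H\setminus K)}<\tfrac{1}{n+1}$ is equivalent to $\min(H\setminus K)>n$. Thus the $d$-balls about $H$ coincide with the members of the countable Vietoris neighbourhood base $\{W(H,F_{n})\}$, so they are genuinely open and form a neighbourhood base for $\mathbb{V}$ at each point. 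This shows $d$ is a compatible non-archimedean quasi-metric, and therefore $(CL(D(\omega)),\mathbb{V})$ is non-archimedean quasi-metrizable, and a fortiori quasi-metrizable. The only subtle point, and hence the main obstacle, is selecting $d$ so that the ultra-triangle inequality survives; everything hinges on the observation that the relevant neighbourhoods are the order-intervals $[F,H]$ of $\mathcal{P}(\omega)$ and that $\min(H\setminus\,\cdot\,)$ behaves additively under nested inclusions.
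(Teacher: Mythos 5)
Your proof is correct, and it rests on the same combinatorial core as the paper's argument: since $D(\omega)$ is discrete, the order-intervals $\{K: F\subseteq K\subseteq H\}$ (for $F\subseteq H$ finite) are basic Vietoris-open sets forming a neighbourhood base at $H$, and they nest well, in the sense that if $B$ lies in such an interval around $A$, then the corresponding interval around $B$ sits inside the one around $A$. Where you genuinely diverge is in how this observation is converted into non-archimedean quasi-metrizability. The paper packages the intervals as a $g$-function $G(m,A)=\langle\{r_{n_1}\},\ldots,\{r_{n_m}\},A\rangle$ (using the first $m$ elements of $A$ in a fixed enumeration, rather than your $F_n=H\cap\{0,\ldots,n\}$; this difference is immaterial), checks the two conditions (base at $A$, and $B\in G(m,A)\Rightarrow G(m,B)\subseteq G(m,A)$), and then concludes by citing the characterization of non-archimedean quasi-metrizability via such $g$-functions in \cite[Theorem 10.2]{G1984}. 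You instead build the quasi-metric explicitly, $d(H,K)=1/(1+\min(H\setminus K))$ when $K\subseteq H$ (with the convention $\min\emptyset=\infty$) and $d(H,K)=1$ otherwise, and verify the strong triangle inequality by hand via the disjoint decomposition $H\setminus L=(H\setminus K)\sqcup(K\setminus L)$, together with the ball identification $B(H,\tfrac{1}{n+1})=W(H,F_n)$, which is exactly what makes the induced topology coincide with $\mathbb{V}$. In effect you re-prove, in this special case, the direction of the cited theorem that the paper takes on faith. What your route buys is a self-contained argument and an explicit formula for the quasi-metric; what the paper's route buys is brevity, since all the metric-construction work is absorbed into the reference. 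Both are complete and correct proofs.
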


\begin{proof}
Let $D(\omega)=\{r_{n}: n\in\mathbb{N}\}$ endowed with a discrete topology $\tau$. Now we define a $g$-function from $\mathbb{N}\times D(\omega)\rightarrow \tau$ as follows (1) and (2):

\smallskip
(1) If $A\in CL(D(\omega))$ is a finite subset of $D(\omega)$, then we can enumerate $A$ as $\{r_{n_{1}}, \cdots, r_{n_{k}}\}$ for some $k\in\mathbb{N}$ such that $n_{1}<\ldots, <n_{k}$; then put $$G(m, A)=\langle\{r_{n_{1}}\}, \ldots, \{r_{n_{k}}\}\rangle$$ for each $m\in\mathbb{N}$.

\smallskip
(2) If $A\in CL(D(\omega))$ is an infinite subset of $D(\omega)$, then we can enumerate $A$ as $\{r_{n_{i}}\}_{i\in\mathbb{N}}$ such that $n_{i}<n_{i+1}$ for each $i\in\mathbb{N}$; then put $$G(m, A)=\langle\{r_{n_{1}}\}, \ldots, \{r_{n_{m}}\}, A\rangle$$ for each $m\in\mathbb{N}$.

Now it easily check the following two conditions hold.

\smallskip
(i) For each $A\in CL(D(\omega))$ the family $\{G(m, A)\}_{m\in\mathbb{N}}$ is a base at $A$ in $(CL(D(\omega)), \mathbb{V})$.

\smallskip
(ii) For each $A\in CL(D(\omega))$, if $B\in G(m, A)$, then $G(m, B)\subset G(m, A)$.

Therefore, it follows from \cite[Theorem 10.2]{G1984} that $(CL(D(\omega)), \mathbb{V})$ is non-archimedean quasi-metrizable.
\end{proof}

Let $C_{\omega}=\{\infty\}\cup\{x_{mn}: n, m\in\mathbb{N}\}$ be a countable infinite set. Endow $C_{\omega}$ with a topology $\upsilon$ as follows:

\smallskip
(1) Each single point set $\{x_{mn}\}$ is open in $C_{\omega}$;

\smallskip
(2) For each $k\in\mathbb{N}$, put $U_{k}=\{x_{mn}:  m\in\mathbb{N}, n\geq k+1\}\cup\{\infty\}$; the family $\{U_{k}\}$ is a base at the point $\infty$.

From Theorem~\ref{t-g}, it follows that $(CL(C_{\omega}), \mathbb{V})$ is a $\gamma$-space\footnote{A space $(X, \tau)$ is a {\it $\gamma$-space} if there exists a function $g:
\omega\times X \to \tau$ such that (i) $\{g(n, x): n\in \omega\}$ is
a base at $x$; (ii) for each $n\in \omega$ and $x\in X$, there
exists $m\in \omega$ such that $y\in g(m, x)$ implies $g(m,
y)\subset g(n, x)$. By \cite[Theorem 10.6(iii)]{G1984}, each $\gamma$-space is a $D_0$-space.}. However, the following question is still unknown for us.

\begin{question}\label{qq}
Is the hyperspace $(CL(C_{\omega}), \mathbb{V})$ quasi-metrizable?
\end{question}

\smallskip
\section{The characterizations of tightness in hyperspaces}
In this section, we mainly give a characterization of tightness in hyperspace; in particular, we give a characterization of hyperspace which is Fr\'echet-Urysohn. First, we recall and introduce some concepts.

The {\it tightness} of a space $X$ is the minimal cardinal $\tau\geq\omega$ such that if any $x$ is a cluster point of any
subset $A$ of $X$, then there is a subset $B$ of $A$ such that $|B|\leq\tau$ and $x$ is a cluster point of
$B$. The tightness of $X$ is denoted by $t(X)$.

\begin{definition}
Let $X$ be a space, $\mathcal{F}\subset CL(X)$ and $A\in CL(X)$.

\smallskip
(1) The set $A$ is called a {\it cluster set} of $\mathcal{F}$ in $X$ if for any finite open subsets
$\{V_i: i\leq k\}$ with $V_{i}\cap A\neq\emptyset$ ($i\leq k$) and any open neighborhood $U$ of $A$, there is a
$F\in \mathcal{F}$ such that $F\subset U$ and $F\cap V_i\neq
\emptyset$ for any $i\leq k$.

\smallskip
(2) The {\it set-tightness} of $X$ is the minimal cardinal $\tau\geq\omega$ such that if $A$ is a cluster set of any
$\mathcal{F}\subset CL(X)$, then there is a subfamily
$\mathcal{F}'\subset \mathcal{F}$ such that $|\mathcal{F}'|\leq\tau$ and $A$ is a cluster set of
$\mathcal{F}'$. The set-tightness of $X$ is denoted by $st(X)$.

\smallskip
(3) The sequence $\{A_j: j\in \mathbb{N}\}$ of $CL(X)$ is called {\it strongly converging} to $A$ in $X$ if
for any finite open subsets $\{V_i: i\leq k\}$ with $A\cap V_i\neq
\emptyset$ $  (j\leq k)$ and any open neighborhood $U$ of $A$, there
exists $N\in \mathbb{N}$ such that $A_j\subset U$ and $A_j\cap
V_i\neq \emptyset $ $ (i\leq k)$ whenever $j\geq N$.

\smallskip
(4) The space $X$ has {\it set-FU property} if whenever $A$ is a cluster set of
$\mathcal{F}\subset CL(X)$, there is a countable subfamily $\{A_j:
j\in \mathbb{N}\}$ such that $\{A_j: j\in \mathbb{N}\}$ strongly
converges to $A$ in $X$.
\end{definition}

From the definition of the set-FU property, it follows that if
elements of $\mathcal{F}$ and $A$ are all singleton, then $X$ is
Fr\'echet-Urysohn. Therefore, it easily see that there exists a countable set-tightness space $X$ such that $X$ is not set-FU property, such as Arens space $S_{2}$. Now we can use the concepts of set-FU property and set-tightness to characterize the Fr\'echet-Urysohn and tightness of $(CL(X), \mathbb{V})$ respectively. First, the following proposition gives a characterization of $X$ such that $t((CL(X), \mathbb{V}))\leq\tau$.

\begin{proposition}\label{p4}
Let $X$ be a space. Then $t((CL(X), \mathbb{V}))\leq\tau$ if and only if $st(X)\leq\tau$.
\end{proposition}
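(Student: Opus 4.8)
The statement $t((CL(X),\mathbb{V}))\le\tau \iff st(X)\le\tau$ is an equivalence, so I would prove the two implications separately, and in each direction the heart of the matter is translating between the hyperspace notion of \emph{cluster point} (a point in the closure, witnessed by basic Vietoris neighborhoods) and the base-space notion of \emph{cluster set} (witnessed by finitely many open sets meeting $A$ together with an open neighborhood of $A$). The key observation, which I would isolate first, is that for $A\in CL(X)$ and $\mathcal{F}\subset CL(X)$, saying $A\in\overline{\mathcal{F}}$ in $(CL(X),\mathbb{V})$ means every basic neighborhood $\langle U_1,\ldots,U_k\rangle$ of $A$ meets $\mathcal{F}$; and a basic Vietoris neighborhood of $A$ is exactly determined by an open set $U\supseteq A$ together with finitely many open sets $V_1,\ldots,V_k$ each meeting $A$ (one takes $U=\bigcup U_i$ and lets the $V_i$ be those $U_i$ that pin down the various pieces of $A$). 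This is precisely the data appearing in the definition of cluster set. So the single translation lemma I would state is:
\begin{equation*}
A\in\overline{\mathcal{F}}\text{ in }(CL(X),\mathbb{V})\iff A\text{ is a cluster set of }\mathcal{F}\text{ in }X.
\end{equation*}
Establishing this equivalence carefully — matching an arbitrary basic neighborhood $\langle U_1,\ldots,U_k\rangle$ of $A$ to a choice of $U$ and $V_i$'s, and conversely — is the bookkeeping core of the proof, and I would do it once so both directions of the Proposition follow almost formally.

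\textbf{The two implications.}
Assume first that $st(X)\le\tau$, and suppose $A$ is a cluster point of some $\mathcal{F}\subset CL(X)$ in the hyperspace, i.e. $A\in\overline{\mathcal{F}}$. By the translation lemma $A$ is a cluster set of $\mathcal{F}$ in $X$; by $st(X)\le\tau$ there is $\mathcal{F}'\subset\mathcal{F}$ with $|\mathcal{F}'|\le\tau$ such that $A$ is a cluster set of $\mathcal{F}'$; applying the translation lemma in reverse, $A\in\overline{\mathcal{F}'}$, so $A$ is a cluster point of a subfamily of size $\le\tau$. Since every $\mathcal{F}\subset CL(X)$ is itself a subset of the hyperspace and every point of $CL(X)$ is some closed set $A$, this shows $t((CL(X),\mathbb{V}))\le\tau$. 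The converse runs along the same track: given $st(X)>\tau$ one gets a witnessing $\mathcal{F}$ and cluster set $A$ for which no small subfamily works, and the translation lemma turns this into a point $A\in\overline{\mathcal{F}}$ in the hyperspace with no subfamily of size $\le\tau$ having $A$ in its closure, so $t((CL(X),\mathbb{V}))>\tau$.

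\textbf{The main obstacle.}
The step I expect to require the most care is the forward direction of the translation lemma — showing that if $A$ is a cluster set of $\mathcal{F}$ then every basic Vietoris neighborhood $\langle U_1,\ldots,U_k\rangle$ of $A$ contains a member of $\mathcal{F}$. The subtlety is that a basic neighborhood carries \emph{two} kinds of constraints simultaneously: the upper constraint $F\subset\bigcup_{i=1}^k U_i$ and the finitely many lower constraints $F\cap U_j\neq\emptyset$, and the $U_j$ that witness the lower constraints need not individually meet $A$ in the convenient way demanded by the definition of cluster set. To reconcile this I would, given $\langle U_1,\ldots,U_k\rangle\ni A$, set $U=\bigcup_{i=1}^k U_i$ (an open neighborhood of $A$) and, for each $j$, use $A\cap U_j\neq\emptyset$ to regard the collection $\{U_j: 1\le j\le k\}$ as exactly the finite family of open sets each meeting $A$ required by the cluster-set definition; then the $F\in\mathcal{F}$ produced satisfies $F\subset U$ and $F\cap U_j\neq\emptyset$ for all $j$, which is membership in $\langle U_1,\ldots,U_k\rangle$. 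Once this matching is pinned down, the regularity of $X$ (assumed throughout the paper) ensures there is no loss of generality in the shapes of neighborhoods, and the remainder is formal.
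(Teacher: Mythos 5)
Your proposal is correct and takes essentially the same route as the paper: the paper's proof is precisely your translation lemma, inlined, passing from cluster-set data $(V_1,\ldots,V_k,U)$ to the Vietoris neighborhood $\langle V_1\cap U,\ldots,V_k\cap U,U\rangle$ and from a basic neighborhood $\langle W_1,\ldots,W_m\rangle$ back to the data $\bigl(W_1,\ldots,W_m,\bigcup_{i\leq m}W_i\bigr)$, then applying tightness and set-tightness exactly as you do. The only detail your sketch leaves implicit is that in the direction ``closure point $\Rightarrow$ cluster set'' one must intersect each $V_i$ with $U$ and append $U$ itself, so that every member of the resulting Vietoris neighborhood is forced to lie inside $U$; with that noted, the argument is complete.
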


\begin{proof}
Sufficiency. Assume $t((CL(X), \mathbb{V}))\leq\tau$. Let $A$ be a cluster set of $\mathcal{F}\subset CL(X)$.
For any finite open subsets $\{V_i: i\leq k\}$ with $A\cap V_i\neq
\emptyset \ (i\leq k)$ and any open neighborhood $U$ of $A$, the set
$\langle V_1\cap U, ..., V_k\cap U, U \rangle$ is a neighborhood of
$A$ in $(CL(X), \mathbb{V})$. Since $t((CL(X), \mathbb{V}))\leq\tau$, there exists a subfamily $\mathcal{F}^{\prime}\subset \mathcal{F}$ such that $|\mathcal{F}^{\prime}|\leq\tau$ and $A\in\overline{\mathcal{F}^{\prime}}$ in $(CL(X), \mathbb{V})$. Since $A\in \langle V_1\cap
U, ... V_k\cap U, U \rangle$, there
exists $F\in \mathcal{F}^{\prime}$ such that $F\in \langle V_1\cap U, ...,
V_k\cap U, U \rangle$; then
$F\subset U, F\cap V_i\neq\emptyset$ for any $i\leq k$.
Therefore, $A$ be a cluster set of $\mathcal{F}^{\prime}$. Thus $st(X)\leq\tau$.

\smallskip
Necessity. Assume $st(X)\leq\tau$, and suppose that $A$ belongs to the closure of
$\mathcal{F}$ in $(CL(X), \mathbb{V})$, where $\mathcal{F}\subset CL(X)$. We claim that $A$ is a
cluster set of $\mathcal{F}$. Indeed, for any finite open subsets
$\{V_i: i\leq k\}$ and any open neighborhood $U$ of $A$, the set $\langle
V_1\cap U, ..., V_k\cap U, U \rangle$ is a neighborhood of $A$ in
$(CL(X), \mathbb{V})$. Then there exists $F\in \mathcal{F}$ such that
$F\in \langle V_1\cap U, ..., V_k\cap U, U \rangle$, which implies that
$F\cap V_i\neq \emptyset$ for $i\leq k$ and $F\subset U$. Hence $A$ is a cluster set of $\mathcal{F}$. Since $st(X)\leq\tau$, there is a subfamily $\mathcal{F}_{1}\subset \mathcal{F}$
such that $|\mathcal{F}_{1}|\leq\tau$ and $A$ is a cluster set of $\mathcal{F}_{1}$ in $X$. Finally it suffices to prove the following claim.

\smallskip
{\bf Claim:} $A\in \overline{\mathcal{F}_{1}}$ in $(CL(X), \mathbb{V})$.

\smallskip
Let  $\langle W_1, ..., W_m\rangle$ be a neighborhood of $A$ in
$(CL(X), \mathbb{V})$, and let $W=\cup\{W_i: i\leq m\}$. Then $A\cap
W_i\neq \emptyset$ for any $i\leq m$ and $A\subset W$. Since $A$ is a cluster set of $\mathcal{F}_{1}$, there exists $F\in \mathcal{F}_{1}$ such that $F\cap
W_i\neq \emptyset$ for any $i\leq m$ and $F\subset W$. Hence $F\in \langle W_1, ..., W_m\rangle$. Therefore, $A\in \overline{\mathcal{F}_{1}}$ in $(CL(X), \mathbb{V})$.
\end{proof}

\begin{corollary}
Let $X$ be a space. Then $(CL(X), \mathbb{V})$ is of countable tightness if and only if $X$ is of countable set-tightness.
\end{corollary}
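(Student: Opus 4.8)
The plan is to recognize that this corollary is simply the special case $\tau=\omega$ of Proposition~\ref{p4}, so the entire argument reduces to a substitution. First I would unwind the terminology: by definition, the hyperspace $(CL(X),\mathbb{V})$ is of countable tightness precisely when $t((CL(X),\mathbb{V}))\leq\omega$, and $X$ is of countable set-tightness precisely when $st(X)\leq\omega$, where $t(\cdot)$ and $st(\cdot)$ are the tightness and set-tightness invariants defined in this section.

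With this translation in hand, I would instantiate Proposition~\ref{p4} at the cardinal $\tau=\omega$. The proposition asserts that for every infinite cardinal $\tau\geq\omega$ one has
$$t((CL(X),\mathbb{V}))\leq\tau \iff st(X)\leq\tau,$$
and substituting $\tau=\omega$ yields $t((CL(X),\mathbb{V}))\leq\omega \iff st(X)\leq\omega$, which is exactly the claimed equivalence. No further work is required.

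There is no genuine obstacle here, since the substantive content has already been discharged in the proof of Proposition~\ref{p4}: namely, the two directions translating between $A$ being a cluster set of a subfamily $\mathcal{F}'\subseteq\mathcal{F}$ in $X$ and $A$ lying in the Vietoris closure $\overline{\mathcal{F}'}$, carried out by testing against the basic neighborhoods $\langle V_1\cap U,\ldots,V_k\cap U,U\rangle$ of $A$ in $(CL(X),\mathbb{V})$. The corollary is therefore an immediate formal consequence, and I would present it as a one-line deduction from Proposition~\ref{p4}.
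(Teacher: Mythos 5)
Your proposal is correct and matches the paper exactly: the paper states this corollary immediately after Proposition~\ref{p4} with no separate proof, precisely because it is the instantiation $\tau=\omega$ of that proposition, which is what you did. Nothing further is needed.
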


\begin{proposition}\label{pro5}
Let $X$ be a (regular) space. Then we have the following statements:
\begin{enumerate}
\item $co\chi(X)\leq st(X)$;

\smallskip
\item If $X$ is a normal space, then $cl\chi(X)\leq st(X)$.
\end{enumerate}
\end{proposition}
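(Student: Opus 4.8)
The plan is to prove both inequalities by a single uniform argument showing that $\chi(A,X)\le st(X)$ for every compact set $A$ (for (1)) and for every closed set $A$ when $X$ is normal (for (2)); taking the supremum over all such $A$ then yields $co\chi(X)\le st(X)$ and $cl\chi(X)\le st(X)$ respectively, since $st(X)\ge\omega$. Fix such a set $A$ and introduce the witnessing family
$\mathcal{F}_A=\{F\in CL(X): A\subseteq \mathrm{int}(F)\}$
of all closed neighborhoods of $A$. Note that $X\in\mathcal{F}_A$, so $\mathcal{F}_A$ is a nonempty subfamily of $CL(X)$, and that every member of $\mathcal{F}_A$ contains $A$.

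First I would check that $A$ is a cluster set of $\mathcal{F}_A$. Given finitely many open sets $V_1,\ldots,V_k$ with $V_i\cap A\neq\emptyset$ and an open $U\supseteq A$, I must produce $F\in\mathcal{F}_A$ with $F\subseteq U$ and $F\cap V_i\neq\emptyset$ for all $i\le k$. Because each $F\in\mathcal{F}_A$ contains $A$, the conditions $F\cap V_i\neq\emptyset$ follow automatically from $V_i\cap A\neq\emptyset$, so the only genuine requirement is the existence of a closed neighborhood $F$ of $A$ with $F\subseteq U$. This is exactly where the hypotheses enter: for a compact $A$ in a regular space, regularity lets one cover $A$ by finitely many open sets whose closures lie in $U$, and the union of these closures is the desired closed neighborhood inside $U$; for a closed $A$ in a normal space, normality yields an open $V$ with $A\subseteq V\subseteq\overline V\subseteq U$, and $\overline V$ serves as $F$. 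Thus in both settings $A$ has a base of closed neighborhoods and is a cluster set of $\mathcal{F}_A$.

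Having this, I would invoke the definition of set-tightness directly: since $A$ is a cluster set of $\mathcal{F}_A$, there is a subfamily $\mathcal{F}'\subseteq\mathcal{F}_A$ with $|\mathcal{F}'|\le st(X)$ of which $A$ is still a cluster set. Unwinding the cluster-set condition for $\mathcal{F}'$ (once more using that every $F\in\mathcal{F}'$ contains $A$, so the $V_i$-conditions are free) shows precisely that every open $U\supseteq A$ contains some $F\in\mathcal{F}'$. Since each such $F$ satisfies $A\subseteq\mathrm{int}(F)\subseteq F\subseteq U$, the family $\{\mathrm{int}(F): F\in\mathcal{F}'\}$ is an open neighborhood base of $A$ in $X$ of cardinality at most $st(X)$. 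Hence $\chi(A,X)\le st(X)$, which finishes both parts after taking suprema.

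The main point to get right is the choice of the witnessing family $\mathcal{F}_A$. Naive choices, such as the singletons $\{\{y\}:y\neq x\}$ or arbitrary closed sets, only detect whether $A$ lies in the closure of a family in $(CL(X),\mathbb{V})$ and cannot recover a neighborhood base, so they fail to bound $\chi(A,X)$. Taking $\mathcal{F}_A$ to consist of closed neighborhoods is what forces the set-tightness reduction to output a family whose interiors form an open base; correspondingly, compactness together with regularity in (1) and normality in (2) are used solely to guarantee that $A$ admits a base of closed neighborhoods, i.e. that $A$ is a cluster set of $\mathcal{F}_A$ in the first place. No appeal to Proposition~\ref{p4} is strictly needed, although the result can equivalently be read as $co\chi(X)\le t((CL(X),\mathbb{V}))$ and, in the normal case, $cl\chi(X)\le t((CL(X),\mathbb{V}))$.
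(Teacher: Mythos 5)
Your proof is correct and follows essentially the same route as the paper: both form a family of closed neighborhoods of $A$ (the paper uses the closures of a fixed open neighborhood base, you use all closed neighborhoods), verify via normality (resp.\ regularity plus compactness) that $A$ is a cluster set of this family, apply set-tightness to extract a subfamily of size at most $st(X)$, and observe that its members' interiors form a neighborhood base at $A$. The only difference is that you spell out the compact case, which the paper dismisses as "similar."
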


\begin{proof}
We only prove (2), and the proof of (1) is similar. Let $st(X)=\tau$, let $A$ be an arbitrary closed subset of $X$, and let $\mathcal{B}_{A}=\{U_{\alpha}: \alpha\in I\}$ be an open neighborhood base at $A$ in $X$. Since $X$ is normal, it follows that $\mathcal{B}_{A}=\{\overline{U_{\alpha}}: \alpha\in I\}$ be a neighborhood base at $A$ in $X$, hence it easily check that $A$ is a cluster set of $\mathcal{B}_{A}$. Because $st(X)\leq\tau$, there exists a subfamily $\mathcal{B}_{A}^{\prime}=\{\overline{U_{\alpha}}: \alpha\in I_{1}\}$ of $\mathcal{B}_{A}$ such that $|I_{1}|\leq\tau$ and $A$ is a cluster set of $\mathcal{B}_{A}^{\prime}$. Therefore, for any open neighborhood $U$ of $A$ in $X$, there exists $\alpha\in I_{1}$ such that $A\subset \overline{U_{\alpha}}\subset U$. Hence $\mathcal{B}_{A}^{\prime}$ is a neighborhood of $A$ in $X$. Hence $cl\chi(X)\leq st(X)$.
\end{proof}

\begin{corollary}\label{c7}
If $X$ is a (regular) space with countable set-tightness, then
$X$ is a $D_0$-space; in particular, $X$ is a $D_{1}$-space if $X$ is normal.
\end{corollary}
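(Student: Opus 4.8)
The plan is to derive this corollary directly from Proposition~\ref{pro5}, since its two assertions are precisely the cardinal-function inequalities established there, specialized to the countable case. The work is therefore almost entirely a matter of unpacking the definitions of $D_0$- and $D_1$-spaces given in Section~2 and matching them against $co\chi(X)$ and $cl\chi(X)$.

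First I would unpack the hypothesis. Saying that $X$ has countable set-tightness means exactly that $st(X)\leq\omega$; since $st(X)\geq\omega$ by its very definition, this forces $st(X)=\omega$. For the first claim I would then invoke Proposition~\ref{pro5}(1) to obtain $co\chi(X)\leq st(X)=\omega$, and since $co\chi(X)\geq\omega$ always holds, this yields $co\chi(X)=\omega$. By the definition recalled in Section~2, $co\chi(X)=\omega$ is exactly the condition $\sup\{\chi(H): H\ \mbox{is compact in}\ X\}\leq\omega$, so $X$ is a $D_0$-space. For the second claim, under the additional assumption that $X$ is normal, I would apply Proposition~\ref{pro5}(2) to get $cl\chi(X)\leq st(X)=\omega$, hence $cl\chi(X)=\omega$, which is the condition $\sup\{\chi(H): H\in CL(X)\}\leq\omega$; thus $X$ is a $D_1$-space.

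Since the substantive content is already carried by Proposition~\ref{pro5}, there is no genuine obstacle in this corollary beyond the bookkeeping just described. The only point that deserves a moment's care is the normality hypothesis in the second half: it is essential, because part (2) of Proposition~\ref{pro5} uses normality (via the replacement of an open base $\{U_\alpha\}$ at a closed set $A$ by the base of closures $\{\overline{U_\alpha}\}$) to produce a cluster set, whereas part (1) needs no such assumption. I would therefore be careful to state the $D_0$-conclusion without any extra hypothesis and to attach normality only to the $D_1$-conclusion, exactly as the statement does.
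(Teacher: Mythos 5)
Your proposal is correct and matches the paper's own (implicit) argument: the paper states this as an immediate consequence of Proposition~\ref{pro5}, exactly as you do, by specializing $co\chi(X)\leq st(X)$ and (under normality) $cl\chi(X)\leq st(X)$ to $st(X)=\omega$ and reading off the definitions of $D_0$- and $D_1$-spaces. Your remark on where normality is genuinely needed is also faithful to the proof of Proposition~\ref{pro5}(2).
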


By Proposition ~\ref{p4} and Corollary~\ref{c7}, we have the following corollary.

\begin{corollary}\label{c5}
If $X$ is a space and $(CL(X), \mathbb{V})$ has countable tightness,
then $X$ is a $D_0$-space; in particular, $X$ is a first-countable space.
\end{corollary}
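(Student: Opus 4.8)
The plan is to obtain the conclusion by chaining Proposition~\ref{p4} with Corollary~\ref{c7}, and then to read off first countability from the $D_0$ property by specializing the notion of ``character of a compact set'' to singletons. No genuinely new argument is needed; the content lies entirely in the two cited results, so the task is to assemble them correctly and to make the final specialization explicit.

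First I would apply Proposition~\ref{p4} with $\tau=\omega$. The hypothesis is that $(CL(X),\mathbb{V})$ has countable tightness, i.e.\ $t((CL(X),\mathbb{V}))\leq\omega$; the equivalence stated there then gives at once $st(X)\leq\omega$, so that $X$ has countable set-tightness. Next, since the base space $X$ is assumed regular throughout the paper, Corollary~\ref{c7} applies directly to $X$ and yields that $X$ is a $D_0$-space. By the definitions in Section~2 this means $co\chi(X)=\omega$; equivalently, every compact subset of $X$ has countable character in $X$.

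Finally, for the ``in particular'' clause I would observe that for each point $x\in X$ the singleton $\{x\}$ is a compact subset of $X$. Hence the $D_0$ property forces $\{x\}$ to have countable character, and the character of the singleton $\{x\}$ in $X$ is precisely the character $\chi(x,X)$ of the point $x$. Thus every point of $X$ has a countable neighborhood base, so $X$ is first-countable. I do not expect any real obstacle: the only step requiring care is noting that the character of a compact set reduces to the ordinary pointwise character when the compact set is a one-point set, which is immediate from the definitions.
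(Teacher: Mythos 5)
Your proposal is correct and follows exactly the paper's own route: the paper derives the corollary by chaining Proposition~\ref{p4} (with $\tau=\omega$) and Corollary~\ref{c7}, just as you do. Your explicit final step---that singletons are compact, so the $D_0$ property specializes to countable character at each point---is the intended (and only) way to read off first countability, which the paper leaves implicit.
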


By \cite[Proposition 3]{AB1996}, it is natural to pose the following question.

\begin{question}
Let $X$ be a space. If $(CL(X), \mathbb{V})$ has countable tightness, does then $(CL(X), \mathbb{V})$ contain a copy of $S_{\omega}$?
\end{question}

\begin{question}\label{q4}
Under what conditions of a space $X$, we have $t(X)=st(X)$.
\end{question}

The following proposition gives a partial answer to Question~\ref{q4}.

\begin{proposition}\label{p5}
Let $X$ be a normal space. Then $X$ has countable set-tightness if and only if $X$ has the following properties:
\begin{enumerate}
\item $X$ is perfectly normal;

\smallskip
\item the set $X\setminus S(X)$ is countable;

\smallskip
\item $S(X)$ is countably compact, hereditarily separable and $\chi(S(X), X)\leq\aleph_{0}$.
\end{enumerate}
\end{proposition}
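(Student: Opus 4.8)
The plan is to establish a biconditional between countable set-tightness of a normal space $X$ and the conjunction of properties (1)--(3). For the forward direction, I would assume $st(X)\leq\aleph_0$ and derive each property in turn, using as the central tool the cluster-set machinery of Definition~4.4 together with Proposition~4.9, which already gives $cl\chi(X)\leq st(X)=\aleph_0$ for normal $X$; this immediately yields $\chi(S(X),X)\leq\aleph_0$ and indeed $\chi(A,X)\leq\aleph_0$ for every closed $A$. The key device throughout is to manufacture, from a putative failure of one of (1)--(3), a closed set $A$ together with a family $\mathcal{F}\subset CL(X)$ having $A$ as a cluster set but for which no countable subfamily $\mathcal{F}'$ retains $A$ as a cluster set, contradicting $st(X)\leq\aleph_0$.

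First I would treat property (2). If $X\setminus S(X)$ (the isolated points) were uncountable, I would take $A=\overline{X\setminus S(X)}$ or an appropriate closed subset and let $\mathcal{F}$ consist of the singletons $\{x\}$ for isolated $x$, arranged so that $A$ is a cluster set witnessed by these singletons hitting prescribed neighborhoods; an uncountable discrete set of isolated points blocks any countable subfamily from being cofinal in the required hitting conditions, forcing $st(X)>\aleph_0$. Next, for property (1), I would argue that countable set-tightness forces every closed set to be $G_\delta$: since $cl\chi(X)\leq\aleph_0$ gives each closed $A$ a countable neighborhood base $\{U_n\}$, normality (shrinking via $\overline{U_{n+1}}\subset U_n$) yields $A=\bigcap_n U_n$, so $X$ is perfectly normal. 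For property (3), hereditary separability of $S(X)$ and countable compactness are the substantive points: I would show that a free sequence or a discrete family of length $\omega_1$ inside $S(X)$ could be converted into a cluster-set configuration defeating countable set-tightness, and that a countably infinite closed discrete subset of $S(X)$ (violating countable compactness) likewise produces such a configuration by spreading singletons across infinitely many separated neighborhoods.

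The converse direction is where I expect to assemble (1)--(3) into $st(X)\leq\aleph_0$ directly. Given a cluster set $A$ of $\mathcal{F}\subset CL(X)$, I would use (3) to fix a countable neighborhood base $\{W_n\}$ at $S(X)$, decompose each relevant $F\in\mathcal{F}$ according to its trace on the countable set $X\setminus S(X)$ and on the hereditarily separable, countably compact core $S(X)$, and then diagonalize: the countability of $X\setminus S(X)$ controls the isolated-point behavior, while hereditary separability supplies a countable dense selection inside $S(X)$ and countable compactness guarantees the finitely-many-neighborhood hitting conditions stabilize. Perfect normality (1) ensures the open sets in the definition of cluster set can be replaced by a countable cofinal tower, so that only countably many members of $\mathcal{F}$ are needed to witness all the finite hitting-and-containment requirements simultaneously.

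The main obstacle will be the converse: weaving (1), (2) and (3) together to bound the number of witnesses by $\aleph_0$ uniformly over all finite families of open sets $\{V_i\}$ and all neighborhoods $U$ of $A$. In particular, the interaction between the countably compact (but possibly non-compact) core $S(X)$ and the countable set of isolated points must be handled carefully, since a cluster set $A$ may meet $S(X)$ in a way that requires simultaneous control of infinitely many hitting conditions; here I anticipate needing countable compactness precisely to reduce these to finitely many genuinely independent constraints at each stage of the diagonalization, and hereditary separability to keep the selection of witnesses countable.
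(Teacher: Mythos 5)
Your proposal takes a genuinely different route from the paper, and unfortunately the hard parts of that route are missing. The paper's proof is a four-line reduction: by the equivalence $t(CL(X),\mathbb{V})=st(X)$ (the paper's Proposition labelled p4), countable set-tightness of $X$ is the same as countable tightness of $(CL(X),\mathbb{V})$; for \emph{normal} $X$, countable tightness of the hyperspace implies its first countability by \cite[Proposition 2.6]{HP2002}; and first countability of $(CL(X),\mathbb{V})$ is exactly equivalent to conditions (1)--(3) by the Hol\'a--Levi characterization \cite[Corollary 1.8]{HL1997}. Both directions then follow at once. You instead attempt a self-contained proof. That is legitimate in principle (and your treatments of perfect normality, of $\chi(S(X),X)\leq\aleph_0$ via $cl\chi(X)\leq st(X)$, and of the countability of $X\setminus S(X)$ via singletons of isolated points are all correct), but two essential pieces are defective or absent.

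First, your argument for countable compactness of $S(X)$ cannot work as sketched. You propose to defeat countable set-tightness by ``spreading singletons across infinitely many separated neighborhoods,'' i.e.\ by a family $\mathcal{F}$ of singleton or finite closed sets. Take $X=\bigoplus_{n\in\omega}(C_n\cup\{x_n\})$, the topological sum of countably many nontrivial convergent sequences: $X$ is countable and metrizable, $S(X)=\{x_n:n\in\omega\}$ is infinite, closed and discrete, so by the proposition $st(X)>\aleph_0$. But any family of finite subsets of this countable $X$ is itself countable, hence is its own countable witnessing subfamily; no such family can ever witness $st(X)>\aleph_0$ (and a single singleton cannot hit two disjoint $V_i$'s at all). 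The witnessing family must consist of \emph{infinite} closed sets, e.g.\ closures of transversals $\{y_n:n\in\omega\}$ with $y_n\in W_n\setminus\{x_n\}$, where the $W_n$ are pairwise disjoint neighborhoods of the $x_n$; normality is then needed to keep these closures inside a prescribed neighborhood $U\supset D$, and one diagonalizes against a countable subfamily. None of this is in your sketch. (For hereditary separability your tool is also wrong: free sequences and discrete families do not characterize it; what one converts into a cluster-set configuration is an uncountable \emph{left-separated} sequence, using closures of its initial segments as $\mathcal{F}$.) Second --- and you concede this --- the converse direction $(1)$--$(3)\Rightarrow st(X)\leq\aleph_0$ is only a plan, not a proof: ``diagonalize\dots countable cofinal tower'' is precisely the content of the Hol\'a--Levi theorem that under (1)--(3) every closed set has a countable local base in $(CL(X),\mathbb{V})$, equivalently that every cluster-set configuration admits a countable witness. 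That construction is the real work of the proposition; the paper sidesteps it by citation, and your sketch does not supply it.
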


\begin{proof}
By \cite[Corollary 1.8]{HL1997}, the necessity is obvious. Let $X$ have countable set-tightness. By Proposition~\ref{p4}, $(CL(X), \mathbb{V})$ has countable tightness. Since $X$ is normal, it follows from \cite[Proposition~2.6]{HP2002} that $(CL(X), \mathbb{V})$ is first-countable. Then the sufficiency holds by \cite[Corollary 1.8]{HL1997}.
\end{proof}

\begin{remark}
By Proposition~\ref{p5}, there exists a metrizable space $X$ such that $X$ is not countable set-tightness. Indeed, let $X$ be an arbitrary non-compact metrizable space such that any point of $X$ is not isolated. By Proposition~\ref{p5}, $X$ is not countable set-tightness.
\end{remark}

The gap between $D_1$-spaces and
$D_0$-spaces is large, see \cite{DL1995}. The following proposition
gives some relations between $D_0$-spaces and other generalized
metric spaces.

\begin{proposition}
Let $X$ be a developable
space or a space with a point-countable base. Then $X$ is a $D_0$-space.
\end{proposition}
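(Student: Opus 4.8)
The plan is to verify the defining property of a $D_0$-space directly: every compact subset $H$ of $X$ has countable character in $X$ (equivalently $co\chi(X)=\omega$). I would treat the two hypotheses separately, since a developable space need not have a point-countable base, so neither case subsumes the other.

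First, suppose $X$ has a point-countable base $\mathscr{B}$, and fix a compact set $H\subseteq X$. The difficulty here is that $H$ may meet uncountably many members of $\mathscr{B}$, so one cannot simply collect all basic sets touching $H$. To control this I would invoke Mi\v{s}\v{c}enko's lemma (see \cite{G1984}): a point-countable family admits only countably many minimal finite covers of any fixed compact set. Enumerate these minimal finite covers as $\{\mathscr{C}_i:i\in\mathbb{N}\}$ and set $V_i=\bigcup\mathscr{C}_i$. Each $V_i$ is open and contains $H$, so it remains to check these form a base at $H$. Given an open $W\supseteq H$, compactness together with the base property yields a finite subfamily of $\mathscr{B}$ covering $H$ with every member contained in $W$; thinning it to a minimal subcover produces some $\mathscr{C}_i$ with $H\subseteq V_i\subseteq W$. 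Hence $\chi(H,X)\leq\aleph_0$.

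Second, suppose $X$ is developable with development $\{\mathscr{U}_n\}$. The naive guess is that $\{\mbox{st}(H,\mathscr{U}_n):n\in\mathbb{N}\}$ is a neighborhood base at $H$, and I expect this to be the main trap: making single stars shrink into an arbitrary $W$ essentially requires the covers to star-refine one another, which would force $X$ to be metrizable. I would therefore avoid stars altogether. For each $n$ choose, by compactness, a finite subfamily $\mathscr{F}_n\subseteq\mathscr{U}_n$ covering $H$, and put $\mathscr{D}=\bigcup_n\mathscr{F}_n$, a countable subfamily of $\bigcup_n\mathscr{U}_n$. Let $\mathscr{N}$ be the countable family of all unions of finite subfamilies of $\mathscr{D}$ that cover $H$. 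Each member of $\mathscr{N}$ is an open neighborhood of $H$, and I claim $\mathscr{N}$ is a base. Given open $W\supseteq H$, for each $x\in H$ pick $n_x$ with $\mbox{st}(x,\mathscr{U}_{n_x})\subseteq W$; the member of $\mathscr{F}_{n_x}$ containing $x$ then lies in $\mbox{st}(x,\mathscr{U}_{n_x})\subseteq W$ and belongs to $\mathscr{D}$. A finite subcover of $H$ selected from these sets is a finite subfamily of $\mathscr{D}$ whose union lies between $H$ and $W$, i.e. an element of $\mathscr{N}$ inside $W$. Thus again $\chi(H,X)\leq\aleph_0$.

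In both cases every compact subset of $X$ has countable character, whence $co\chi(X)=\omega$ and $X$ is a $D_0$-space. The only genuinely delicate point is the developable case: the single-star neighborhoods $\mbox{st}(H,\mathscr{U}_n)$ need not descend below an arbitrary $W$, and the remedy is to pass to the countable subfamily $\mathscr{D}$ of pre-chosen finite subcovers and then take finite unions, which replaces the unavailable star-refinement by the compactness of $H$.
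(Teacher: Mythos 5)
Your proof is correct, but it follows a genuinely different route from the paper's in both cases. For the point-countable base case, the paper first cites the result that a compact subspace $K$ of a space with a point-countable base is metrizable, takes a countable dense subset $D\subseteq K$, notes that every base element meeting $K$ must meet $D$ (so only countably many do), and then takes unions of finite subfamilies covering $K$; you instead get countability directly from Mi\v{s}\v{c}enko's lemma on minimal finite covers, which avoids the metrization result entirely and isolates exactly the combinatorial fact needed. For the developable case the divergence is sharper: the paper collapses $K$ to a point via the quotient map $f$, observes that $f$ is perfect and cites the preservation of developability under perfect maps to conclude the image is developable (hence first-countable at the collapsed point), and pulls back a countable local base; you give a direct, self-contained argument, choosing a finite subcover $\mathscr{F}_n\subseteq\mathscr{U}_n$ of $H$ at each level and taking finite unions from $\mathscr{D}=\bigcup_n\mathscr{F}_n$. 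Your version is more elementary---it needs neither Worrell-type preservation theorems nor any quotient construction, and it works verbatim without regularity---while the paper's is shorter modulo the cited machinery. Your cautionary remark about the ``trap'' is also well taken: the single stars $\mbox{st}(H,\mathscr{U}_n)$ indeed need not form a base at $H$ in a general developable space, and replacing star-refinement by compactness plus finite unions is the right fix.
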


\begin{proof}
Fix an arbitrary compact subset $K\subset X$.

\smallskip
(1) Assume that $X$ is a space with a point-countable base. Let $\mathcal{B}$ be a point-countable base of $X$. Then $K$ is
metrizable by \cite[Corollary 7.11(ii)]{G1984}. Let $D$ be a
countable dense subset of $K$, and put $\mathcal{B}'=\{B\in \mathcal{B}:
B\cap K\neq \emptyset\}$; then $|\mathcal{B}'|\leq \omega$. Let
$$\mathcal{B}''=\{\cup \mathcal{F}: \mathcal{F}\subset \mathcal{B}'\ \mbox{is a finite cover of}\ K\}.$$ We prove that $\mathcal{B}''$ is a
countable base of $K$. Indeed, if $K\subset U$ with $U$ open, then, for
any $x\in K$, pick $B_x\in \mathcal{B}^{\prime}$ such that $x\in B_x\subset U$.
Since $\{B_x: x\in K\}$ is an open cover of $K$, there exists $n\in
\mathbb{N}$ such that $\{B_{x_i}: i\leq n\}$
is a finite open cover of $K$, then $K\subset \bigcup_{i\leq n}B_{x_i}\subset U$
and $\bigcup_{i\leq n}B_{x_i}\in \mathcal{B}''$.

\smallskip
(2) Let $X$ be a developable space, and let $Y$ be the quotient
space by identifying $K$ to a point $z$ with the canonical map $f$. It is
easy to see that $f$ is a perfect map. Since developable spaces are
preserved by perfect maps, then $Y$ is developable. Let $\{U_n:
n\in\mathbb{N}\}$ be a countable local base at $z$,
and put $V_n=f^{-1}(U_n)$ for each $n\in\mathbb{N}$. Then $\{V_n: n\in \mathbb{N}\}$ is a
countable base of $K$. Hence $X$ is a $D_0$-space.
\end{proof}

From Theorems~\ref{t100} and~\ref{t-g} below, there exists a space $X$ such that
$(CL(X), \mathbb{V})$ is a $D_{0}$-space, but $(CL(X), \mathbb{V})$ is not a $D_{1}$-space. Indeed, let $X$ be the space of topological sum of a compact metrizable space $C$ and a countable infinite discrete space $D$, that is, $X=C\bigoplus D$. Then it follows that $(CL(X), \mathbb{V})$ is a $D_{0}$-space and not a $D_{1}$-space.

The following proposition gives a characterization of $X$ such that $(CL(X), \mathbb{V})$ is Fr\'echet-Urysohn, which could be proved by a similar proof of
Proposition ~\ref{p4}.

\begin{proposition}\label{p3}
Let $X$ be a space. Then $(CL(X), \mathbb{V})$ is Fr\'echet-Urysohn if and only if $X$
has set-FU property.
\end{proposition}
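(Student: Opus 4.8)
The plan is to mirror the argument of Proposition~\ref{p4}, replacing the cluster-point/cluster-set correspondence used there by its sequential analogue. The whole proof reduces to two elementary dictionary facts between the Vietoris hyperspace and the ground space $X$. The first is exactly what was extracted inside the proof of Proposition~\ref{p4}: $A\in\overline{\mathcal{F}}$ in $(CL(X),\mathbb{V})$ if and only if $A$ is a cluster set of $\mathcal{F}$. The second, which I must supply here, is the sequential version: a sequence $\{A_j:j\in\mathbb{N}\}\subset CL(X)$ converges to $A$ in $(CL(X),\mathbb{V})$ if and only if $\{A_j\}$ strongly converges to $A$ in $X$. Once both are available, the biconditional of the proposition follows formally.

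For the second dictionary fact I would argue directly from the basic open sets $\langle W_1,\ldots,W_m\rangle$. Given such a Vietoris neighborhood of $A$, membership $A_j\in\langle W_1,\ldots,W_m\rangle$ unwinds to $A_j\subset\bigcup_{i\le m}W_i$ together with $A_j\cap W_i\neq\emptyset$ for all $i\le m$; setting $U=\bigcup_{i\le m}W_i$ and $\{V_i\}=\{W_i\}$, this is precisely the defining condition in the notion of strong convergence, the hypothesis $A\cap W_i\neq\emptyset$ being automatic from $A\in\langle W_1,\ldots,W_m\rangle$. Conversely, given finitely many open sets $V_i$ meeting $A$ and an open neighborhood $U\supset A$, the set $\langle V_1\cap U,\ldots,V_k\cap U,U\rangle$ is a Vietoris neighborhood of $A$, and eventual membership of $A_j$ in it yields $A_j\subset U$ and $A_j\cap V_i\neq\emptyset$. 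This is the same bookkeeping already used in Proposition~\ref{p4}.

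With these two facts, both implications are immediate. For necessity, if $(CL(X),\mathbb{V})$ is Fr\'echet--Urysohn and $A$ is a cluster set of $\mathcal{F}$, then $A\in\overline{\mathcal{F}}$, so some sequence $\{A_j\}\subset\mathcal{F}$ converges to $A$ in $(CL(X),\mathbb{V})$, hence strongly converges to $A$ in $X$; thus $X$ has the set-FU property. For sufficiency, if $X$ has the set-FU property and $A\in\overline{\mathcal{F}}$, then $A$ is a cluster set of $\mathcal{F}$, so a countable subfamily $\{A_j:j\in\mathbb{N}\}\subset\mathcal{F}$ strongly converges to $A$, hence converges to $A$ in the Vietoris topology, witnessing the Fr\'echet--Urysohn property. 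The only step requiring genuine (though still routine) care is the verification of the second dictionary fact, since Proposition~\ref{p4} records only the closure/cluster-set correspondence and not the convergence version; I expect the matching of $\langle W_1,\ldots,W_m\rangle$ with the strong-convergence data $(U,\{V_i\})$ to be the main, and essentially only, point needing attention.
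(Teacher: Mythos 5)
Your proof is correct and takes exactly the approach the paper intends: the paper offers no written argument for this proposition, saying only that it ``could be proved by a similar proof of Proposition~\ref{p4}'', and your argument is precisely that proof spelled out. Both of your dictionary facts check out---the closure/cluster-set equivalence is lifted verbatim from the proof of Proposition~\ref{p4}, and your verification that convergence of $\{A_j\}$ to $A$ in $(CL(X),\mathbb{V})$ coincides with strong convergence in $X$ is the routine sequential analogue the paper leaves implicit.
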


It is well known that a strongly Fr\'echet-Urysohn space is Fr\'echet-Urysohn, but not vice versa. It is natural to pose the following two questions. Clearly, if Question~\ref{q2} is positive, then Question~\ref{q3} is also positive.

\begin{question}\label{q3}
Let $X$ be a space. If $(CL(X), \mathbb{V})$ is Fr\'echet-Urysohn, is then $(CL(X), \mathbb{V})$ strongly Fr\'echet-Urysohn?
\end{question}

\begin{question}\label{q2}
Let $X$ be a space. If $(CL(X), \mathbb{V})$ contains a (closed) copy of $S_{\omega}$, does then $(CL(X), \mathbb{V})$ contain a (closed) copy of $S_{2}$?
\end{question}

\smallskip
\section{Some generalized metric properties on hyperspaces}
In this section, we mainly give the characterizations of some generalized metric properties on hyperspaces, such as semi-stratifiable spaces, quasi-developable spaces, $D_{1}$-spaces, symmetrizable spaces, $\gamma$-spaces, etc.

First, we prove the first main theorem in this section as follows, which gives a partial answer to Problem~\ref{p7}.

\begin{theorem}\label{t100}
Let $X$ be a space. Then the following statements are equivalent.
\begin{enumerate}
\smallskip
\item $(CL(X), \mathbb{V})$  is a semi-stratifiable space;

\smallskip
\item $(CL(X), \mathbb{V})$  is quasi-developable;

\smallskip
\item $(CL(X), \mathbb{V})$ is a $D_{1}$-space;

\smallskip
\item $(CL(X), \mathbb{V})$ is symmetrizable;

\smallskip
\item $X$ is a compact metrizable space.
\end{enumerate}
\end{theorem}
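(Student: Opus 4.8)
The plan is to establish the cycle of implications with the trivial "compact metrizable $\Rightarrow$ everything" direction handled first, and then show that each of the four generalized metric properties forces $X$ to be compact and metrizable. The key observation driving the whole argument is Corollary~\ref{c6}: $(CL(D(\omega)),\mathbb{V})$ contains a closed copy of $\mathbb{S}^{n}$ for every $n$, and in fact the stronger Theorem~\ref{tt} gives a closed copy of the full box power $\prod_{n}\mathbb{S}_{n}$. Since all of semi-stratifiability, quasi-developability, symmetrizability, and the $D_{1}$-property are hereditary with respect to closed subspaces, a key structural fact I would exploit is that if $(CL(X),\mathbb{V})$ has any one of these properties and $X$ contains an infinite closed discrete subset $D$, then $(CL(D),\mathbb{V})$ — a closed subspace of $(CL(X),\mathbb{V})$ by the natural restriction embedding — would inherit the property, and hence so would the closed copy of $\mathbb{S}$ inside it. But Proposition~\ref{p1} says $\mathbb{S}$ belongs to none of these classes. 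This immediately rules out any noncompact $X$, since a noncompact (regular) space contains an infinite closed discrete set.

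First I would prove $(5)\Rightarrow(1),(2),(3),(4)$ in one stroke: if $X$ is compact metrizable, then by the Hol\'a--Pelant--Zsilinszky theorem quoted in the introduction, $(CL(X),\mathbb{V})$ is metrizable, and every metrizable space is semi-stratifiable, quasi-developable, symmetrizable, and (being first-countable with every closed set of countable character) a $D_{1}$-space. So the whole burden is the four reverse implications $(1)\Rightarrow(5)$, $(2)\Rightarrow(5)$, $(3)\Rightarrow(5)$, $(4)\Rightarrow(5)$.

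For each of these I would argue in two stages. Stage one is compactness of $X$: assume $(CL(X),\mathbb{V})$ has the property in question; if $X$ were not compact it would contain an infinite closed discrete subspace homeomorphic to $D(\omega)$, whose hyperspace embeds as a closed subspace of $(CL(X),\mathbb{V})$, and then the closed copy of $\mathbb{S}$ from Corollary~\ref{c6}/Lemma~\ref{l1} would have the property, contradicting the relevant clause of Proposition~\ref{p1}. (For $D_{1}$ this uses clause (7); for semi-stratifiable, clause (5) via the fact that symmetrizable $\Rightarrow$ semi-stratifiable and the $\beta$-space chain; for quasi-developable, clause (6); for symmetrizable, clause (5).) Hence $X$ is compact. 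Stage two is metrizability: once $X$ is compact, I would use the fact that each of these four properties is, on the hyperspace, strong enough to force the Hol\'a--Pelant--Zsilinszky metrizability criterion — concretely, a compact space whose hyperspace is (say) semi-stratifiable or $D_{1}$ must be metrizable, which I would derive either by showing directly that $X$ itself embeds as a closed subspace of $(CL(X),\mathbb{V})$ (via $x\mapsto\{x\}$, whose image is closed since $X$ is regular) so that $X$ inherits the property, and then invoking that a compact space with, e.g., a $G_{\delta}$-diagonal or semi-stratifiability is metrizable.

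The step I expect to be the main obstacle is stage two for the symmetrizable and quasi-developable cases, where having reduced to compact $X$ I must pass from a property of the hyperspace to metrizability of $X$ itself. The clean route is to verify that $X$ (as the singletons) is a closed copy inside $(CL(X),\mathbb{V})$, so $X$ inherits quasi-developability or symmetrizability; then a compact quasi-developable space is developable, hence metrizable, and a compact symmetrizable space is likewise metrizable. The delicate point is ensuring the inherited property on $X$ combines with compactness correctly — in particular checking that the relevant metrization theorem (e.g.\ compact $+$ semi-stratifiable $\Rightarrow$ metrizable, compact $+$ quasi-developable $\Rightarrow$ developable $\Rightarrow$ metrizable) applies, and that the closed embedding of $X$ into its hyperspace is genuinely closed, which is where regularity of $X$ is used.
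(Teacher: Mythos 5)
Your skeleton ((5) gives the rest via the Hol\'a--Pelant--Zsilinszky metrization theorem; each of (1)--(4) forces (5) via closed copies of $\mathbb{S}$ and Proposition~\ref{p1}) is the paper's, but two steps are genuinely wrong. First, your compactness stage rests on the claim that a noncompact regular space contains an infinite closed discrete set. That is false: $[0,\omega_1)$ is regular and noncompact but countably compact, so it has no infinite closed discrete subset. What the Sorgenfrey obstruction actually yields is only that $X$ is \emph{countably} compact (this is exactly the paper's Lemma~\ref{l2} and Theorem~\ref{t1}), so every later appeal to ``compact'' in your stage two is unjustified. For (1) and (2) this is repairable: semi-stratifiability and quasi-developability are hereditary, $X$ inherits them as a subspace of $(CL(X),\mathbb{V})$, and one can quote the countably compact forms of the metrization theorems (Creede's theorem that countably compact semi-stratifiable spaces are compact and metrizable; for (2) this is the paper's own route via \cite[Theorem 8.5, Corollary 8.3(ii)]{G1984}). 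For (4) the paper avoids the deeper ``countably compact symmetrizable $\Rightarrow$ metrizable'' by first getting $X$ first countable from Corollary~\ref{c5}, then using ``first countable $+$ symmetrizable $\Rightarrow$ semi-stratifiable'' to get a $G_\delta$-diagonal, and finally ``countably compact $+$ $G_\delta$-diagonal $\Rightarrow$ compact metrizable.''

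Second, and fatally for (3): the plan ``$X$ sits as a closed set of singletons in $(CL(X),\mathbb{V})$, inherits the $D_1$ property, and a compact $D_1$-space is metrizable'' cannot be completed, because compact $D_1$-spaces need not be metrizable. The double arrow space is compact and perfectly normal, so every closed subset of it is a $G_\delta$ and hence (by compactness plus normality) has countable character; it is therefore a $D_1$-space, yet it is not metrizable. So the $D_1$ property of $X$ alone is too weak, and no metrization theorem of the form you need exists. The paper's proof of (3)$\Rightarrow$(5) (and also of (1)$\Rightarrow$(5)) uses the hypothesis on the hyperspace at the level of \emph{two-point} sets, via Lemma~\ref{l4}: since $(CL(X),\mathbb{V})$ is perfect, the closed copy of $X$ is a $G_\delta$ there; intersecting with $F_2(X)=\{B\subset X:|B|\le 2\}$ and pulling back along the open, closed, continuous surjection $f_2:X^2\to F_2(X)$, $f_2(x,y)=\{x,y\}$, exhibits the diagonal as a $G_\delta$ in $X^2$; then countable compactness together with the $G_\delta$-diagonal gives that $X$ is compact and metrizable by \cite[Theorem 2.14]{G1984}. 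That passage through $F_2(X)$ --- information which is invisible from the singleton copy of $X$ --- is the idea your proposal is missing.
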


In order to give the proof, we give some technique lemmas and theorems.

\begin{lemma}\label{l2}
Let $\mathcal{P}$ be a topological property that is closed
hereditary, and let there exist $n\in\mathbb{N}$ such that $\mathbb{S}^{n}$ does not have the property $\mathcal{P}$. If
$(CL(X), \mathbb{V})$ has the property $\mathcal{P}$, then $X$ is
countably compact.
\end{lemma}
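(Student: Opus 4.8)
The plan is to argue by contraposition: I will show that if $X$ fails to be countably compact, then $(CL(X), \mathbb{V})$ cannot have the property $\mathcal{P}$. Since $X$ is regular (hence $T_1$), $X$ is countably compact if and only if it contains no countably infinite closed discrete subspace. So I begin by assuming $X$ is not countably compact and fixing a countably infinite subset $D=\{d_m: m\in\mathbb{N}\}$ that is closed and discrete in $X$; thus $D$ is a closed copy of $D(\omega)$.

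The core step is to exhibit $(CL(D), \mathbb{V})$ as a \emph{closed} subspace of $(CL(X), \mathbb{V})$. First I would observe that because $D$ is closed and discrete in $X$, every nonempty subset $A\subseteq D$ is in fact closed in $X$: a point $x\in X\setminus A$ either lies outside the closed set $D$, or is an isolated point of $D$ not belonging to $A$, and in either case it has a neighborhood missing $A$. Hence $CL(D)$ coincides, as a set, with $D^{+}=\{H\in CL(X): H\subseteq D\}$. Next I would verify that the subspace topology induced on $D^{+}$ by $(CL(X), \mathbb{V})$ is exactly the Vietoris topology of $CL(D)$: for open sets $U_1,\ldots,U_k$ in $X$ one has $\langle U_1,\ldots,U_k\rangle\cap D^{+}=\langle U_1\cap D,\ldots,U_k\cap D\rangle$ computed inside $CL(D)$ (using $H\subseteq D$ to intersect every condition with $D$), and conversely every basic Vietoris-open set of $CL(D)$ arises in this way, since each open set of the discrete space $D$ extends to an open set of $X$. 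Finally, $D^{+}$ is closed in $(CL(X), \mathbb{V})$ because its complement is $(X\setminus D)^{-}$, a subbasic Vietoris-open set (as $X\setminus D$ is open in $X$). This realizes $(CL(D(\omega)), \mathbb{V})$ as a closed subspace of $(CL(X), \mathbb{V})$.

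With the embedding in hand the conclusion is immediate. By Corollary~\ref{c6}, the space $(CL(D), \mathbb{V})\cong(CL(D(\omega)), \mathbb{V})$ contains a closed copy of $\mathbb{S}^{n}$, so composing closed embeddings produces a closed copy of $\mathbb{S}^{n}$ inside $(CL(X), \mathbb{V})$. If $(CL(X), \mathbb{V})$ had the closed-hereditary property $\mathcal{P}$, then $\mathbb{S}^{n}$ would inherit $\mathcal{P}$, contradicting the hypothesis that $\mathbb{S}^{n}$ lacks $\mathcal{P}$. Therefore $(CL(X), \mathbb{V})$ does not have $\mathcal{P}$, which establishes the contrapositive.

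I expect the main obstacle to be the middle step, namely verifying the closed embedding cleanly: matching the induced subspace topology on $D^{+}$ with the intrinsic Vietoris topology of $CL(D)$, and confirming that $D^{+}$ is closed. Everything else is a bookkeeping application of closed-hereditariness together with Corollary~\ref{c6}. The only subtlety worth flagging is the standing regularity assumption, which I use to pass between ``$X$ is not countably compact'' and ``$X$ has a countably infinite closed discrete subspace.''
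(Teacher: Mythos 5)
Your proof is correct and follows essentially the same route as the paper's: pass to a countably infinite closed discrete subset $D\subseteq X$, realize $(CL(D),\mathbb{V})$ as a closed subspace of $(CL(X),\mathbb{V})$, and then invoke Corollary~\ref{c6} together with closed-hereditariness of $\mathcal{P}$. The only difference is that you verify in detail the closed-embedding step (identifying $CL(D)$ with $D^{+}$, matching the topologies, and noting the complement is $(X\setminus D)^{-}$), which the paper simply asserts.
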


\begin{proof}
Suppose $X$ is not countably compact, then there exists a closed,
countable infinite discrete subset $D(\omega)\subset X$. Then $(CL(D(\omega)), \mathbb{V})$ is a closed
subspace of $(CL(X), \mathbb{V})$. By Corollary~\ref{c6}, $(CL(X),
\mathbb{V})$ contains a closed copy of $\mathbb{S}^{n}$ for each $n\in\mathbb{N}$, then $\mathbb{S}^{n}$
has the property $\mathcal{P}$, this is a contradiction. Hence $X$
is countably compact.
\end{proof}

Since all properties in Proposition ~\ref{p1} are closed hereditary,
it follows from Lemma ~\ref{l2} that we have the following theorem.

\begin{theorem}\label{t1}
If $(CL(X), \mathbb{V})$ belongs to any one of spaces in
Proposition~\ref{p1}, then $X$ is countably compact.
\end{theorem}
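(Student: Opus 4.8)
The plan is to reduce Theorem~\ref{t1} to a direct application of Lemma~\ref{l2}, once the two hypotheses of that lemma have been checked for each of the seven classes appearing in Proposition~\ref{p1}. Lemma~\ref{l2} requires a property $\mathcal{P}$ that is (a) closed hereditary and (b) fails for some finite power $\mathbb{S}^{n}$. Condition (b) is immediate: Proposition~\ref{p1} asserts precisely that $\mathbb{S}=\mathbb{S}^{1}$ lies in none of the seven classes, so for each such class I may take $n=1$. Hence the whole content of the argument is the verification of condition (a), namely that each of the seven properties is inherited by closed subspaces.

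First I would dispose of the properties for which heredity is transparent from the defining $g$-function or family. Being a $\beta$-space, symmetrizability, and quasi-developability are all inherited by closed subspaces: restricting the $g$-function (respectively the symmetric $d$, respectively the sequence of families $\{\mathscr{U}_{n}\}$) to a subspace $Y$ yields a witness of the same property on $Y$, where for the $\beta$-space case closedness of $Y$ is exactly what guarantees that the accumulation point furnished by condition (ii) remains in $Y$. Likewise, having a point-countable $k$-network and having a base of countable order are hereditary, since tracing the relevant family (or the sequence of bases) on $Y$ preserves both the point-countability together with the $k$-network covering condition and the decreasing-base condition.

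The two remaining cases need slightly more care. For $D_{1}$-spaces I would argue straight from the definition $cl\chi(X)=\omega$: if $Y$ is closed in a $D_{1}$-space $X$ and $A$ is closed in $Y$, then $A$ is closed in $X$, so $A$ has countable character in $X$; since every neighbourhood of $A$ in $Y$ is the trace of a neighbourhood in $X$, the character of $A$ in $Y$ is no larger, whence $cl\chi(Y)=\omega$. I expect the $p$-space case to be the main obstacle, because the defining sequence $\{\mathscr{U}_{n}\}$ lives in $\beta X$ rather than in $X$ itself; here I would invoke the standard fact (Arhangel'skii) that the class of $p$-spaces is closed hereditary, the natural route being to relate $\beta Y$ to the closure of $Y$ in $\beta X$ and to trace the witnessing families accordingly.

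With condition (a) established for all seven classes and condition (b) supplied by Proposition~\ref{p1}, I would conclude by applying Lemma~\ref{l2} separately to each property. Each application forces $X$ to be countably compact, which is exactly the assertion of Theorem~\ref{t1}.
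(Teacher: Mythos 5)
Your proposal is correct and follows essentially the same route as the paper: the paper's proof of Theorem~\ref{t1} is precisely the one-line observation that all seven properties in Proposition~\ref{p1} are closed hereditary, so Lemma~\ref{l2} (with $n=1$, condition (b) being exactly Proposition~\ref{p1}) applies. The only difference is that you spell out the closed-heredity verifications that the paper asserts without proof, and those verifications are sound.
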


Since each strict $p$-space is a $\beta$-space \cite[page 475]{G1984}, it follows from Theorem~\ref{t1} that we have the following corollary.

\begin{corollary}
A space $X$ is compact if and only if $(CL(X), \mathbb{V})$ is a strict
$p$-space.
\end{corollary}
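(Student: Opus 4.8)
The plan is to prove the two implications separately, the forward one being routine and the converse carrying all the weight. For the forward direction, assume $X$ is compact. Since $X$ is regular, it is then compact Hausdorff, and by Michael's classical theorem \cite{M1951} the hyperspace $(CL(X),\mathbb{V})$ is compact. I would then record the elementary fact that every compact Hausdorff space $Y$ is a strict $p$-space: taking $\beta Y=Y$ and $\mathscr U_n=\{Y\}$ for all $n$, one has $\mathrm{st}(y,\mathscr U_n)=Y$, so $\bigcap_n\mathrm{st}(y,\mathscr U_n)=Y=\bigcap_n\overline{\mathrm{st}(y,\mathscr U_n)}\subseteq Y$, which verifies conditions (1)--(3). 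Hence $(CL(X),\mathbb{V})$ is a strict $p$-space.

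For the converse, suppose $(CL(X),\mathbb{V})$ is a strict $p$-space. Since every strict $p$-space is a $\beta$-space \cite[page 475]{G1984}, and $\beta$-spaces form class (1) of Proposition~\ref{p1}, Theorem~\ref{t1} already yields that $X$ is countably compact. The decisive remaining task is to upgrade countable compactness to full compactness, and here it is essential to use the hyperspace together with condition (3): knowing only that $X$ itself is a $p$-space (it embeds as the closed set of singletons in $(CL(X),\mathbb{V})$) would not suffice, since $\omega_1$ is a countably compact, non-compact $p$-space; and the $q$-space property of the hyperspace alone is also too weak, as $(CL(D(\omega)),\mathbb{V})$ is quasi-metrizable by Theorem~\ref{theo} while $D(\omega)$ is not even countably compact.

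To perform the upgrade I would work at the distinguished point $X\in CL(X)$, whose basic neighbourhoods are exactly the sets $\langle\mathcal U\rangle$ with $\mathcal U$ a finite open cover of $X$. Writing $H=(CL(X),\mathbb{V})$ and letting $\{\mathscr U_n\}$ witness the strict $p$-space property in $\beta H$, set $W_n=\bigcap_{i\le n}\mathrm{st}(X,\mathscr U_i)$, a decreasing sequence of open neighbourhoods of $X$. Conditions (2) and (3) evaluated at $A=X$ give that $C_X:=\bigcap_n\overline{W_n}^{\,\beta H}=\bigcap_n W_n$ is a compact subset of $\beta H$ contained in $CL(X)$ and containing $X$; moreover any sequence $\{A_m\}$ with $A_m\in W_m$ clusters in $\beta H$, and all of its cluster points lie in $C_X\subseteq CL(X)$. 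Shrinking, I fix finite open covers $\mathcal U_n$ of $X$ with $\langle\mathcal U_n\rangle\subseteq W_n$, so that any choice $A_m\in\langle\mathcal U_m\rangle$ is a closed set meeting every member of $\mathcal U_m$ and is automatically subject to the clustering-in-$C_X$ conclusion.

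The crux, and the step I expect to be the main obstacle, is to turn this tool against non-compactness. Assuming $X$ is not compact, it is not Lindel\"of (a regular countably compact Lindel\"of space is compact), so there is a strictly increasing, continuous $\omega_1$-chain $\{V_\alpha:\alpha<\omega_1\}$ of proper open subsets with $\bigcup_{\alpha<\omega_1}V_\alpha=X$; since $\mathrm{cf}(\omega_1)>\omega$, no countable subfamily covers $X$. The plan is to choose the $A_m\in\langle\mathcal U_m\rangle$ so as to \emph{escape} this chain, in such a way that the cluster point $L\in C_X$ forced by the strict $p$-structure cannot actually exist: arranging that any Vietoris cluster point would have to meet cofinally many $V_\alpha$ while simultaneously being trapped below some single $V_{\alpha^*}$ (exploiting that $C_X$ is compact and that the data $\{W_n\}$ are countable), thereby yielding a contradiction. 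Matching the uncountable cover witnessing non-compactness against the countable, compact-valued $p$-structure $\{W_n\},C_X$ is the delicate combinatorial heart of the argument; once the contradiction is secured, $X$ is compact, which closes the equivalence.
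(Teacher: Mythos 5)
Your forward direction is fine and matches the paper in substance: $X$ compact implies $(CL(X),\mathbb{V})$ compact (Michael), and a compact Hausdorff space is trivially a strict $p$-space. The converse, however, has a genuine gap. After correctly invoking Theorem~\ref{t1} to get countable compactness, you declare that one must work with the strict $p$-structure on the hyperspace itself, and the entire upgrade from countable compactness to compactness is left as a plan: you posit an $\omega_1$-chain witnessing non-Lindel\"ofness (note that a cover with no countable subcover need not yield an increasing $\omega_1$-chain of open sets whose union is all of $X$), and then say you would choose sets $A_m\in\langle\mathcal U_m\rangle$ ``so as to escape this chain'' and derive a contradiction from the compactness of $C_X$. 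That contradiction is never actually constructed; calling it ``the delicate combinatorial heart of the argument'' does not discharge it. As written, the converse is not proved.

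Moreover, the obstacle you set up for yourself is illusory, and this is exactly where you diverge from the paper. You argue that passing to $X$ as the closed set of singletons ``would not suffice, since $\omega_1$ is a countably compact, non-compact $p$-space'' --- but $\omega_1$ only rules out using the \emph{plain} $p$-space property of $X$. The hypothesis gives you the \emph{strict} $p$-space property, which is closed hereditary, so $X$ itself is a strict $p$-space; and the paper's proof then finishes in one line: every strict $p$-space is submetacompact, and a countably compact submetacompact (regular) space is compact. (Consistently with this, $\omega_1$ is a $p$-space but not a strict $p$-space.) So the simple route through the closed copy of $X$ in $(CL(X),\mathbb{V})$ is precisely the one that works; the ``strict'' part of the hypothesis is what buys submetacompactness, and that is the fact your proposal overlooks.
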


\begin{proof}
If $(CL(X), \mathbb{V})$ is a strict
$p$-space, then it follows from Theorem ~\ref{t1} that $X$ is countably compact. Since $X$ is
a strict $p$-space, $X$ is submetacompact, hence $X$ is
compact. If $X$ is compact, then it follows from \cite[Corollary 13]{CLP2002} that $(CL(X), \mathbb{V})$ is a strict
$p$-space.
\end{proof}

\begin{remark}
It is well known that $(CL(X), \mathbb{V})$ is locally
compact if and only if $X$ is compact if and only if $(CL(X),
\mathbb{V})$ is compact, see \cite[Corollary 13]{CLP2002}. Both locally
compact spaces and  strict $p$-spaces are $p$-spaces, it is natural
to ask the following question.
\end{remark}

\begin{question}
If $(CL(X), \mathbb{V})$ is a $p$-space, is then $X$ compact?
\end{question}

\begin{lemma}\label{l4}
Let $\mathcal{P}$ be a property that is closed hereditary, and let there exists some $n\in\mathbb{N}$ such that $\mathbb{S}^{n}$ doesn't have the property $\mathcal{P}$. Then a space $X$ is compact metrizable if and only if $(CL(X), \mathbb{V})$
is perfect and has property $\mathcal{P}$.
\end{lemma}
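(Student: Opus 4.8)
The plan is to prove the two implications separately, the substance lying entirely in the backward one. For the forward implication, assume $X$ is compact metrizable. By the Hol\'a--Pelant--Zsilinszky theorem recalled in the introduction \cite{HPZ2003}, $(CL(X),\mathbb{V})$ is then metrizable, hence in particular perfect, since in a metrizable space the complement of any open set is a countable intersection of open sets. That $(CL(X),\mathbb{V})$ has property $\mathcal{P}$ is precisely the requirement that $\mathcal{P}$ hold for metrizable spaces; this is the case for every $\mathcal{P}$ to which the lemma is applied (semi-stratifiability, quasi-developability, symmetrizability and the $D_1$-property all follow from metrizability), so this direction is routine.

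For the backward implication, suppose $(CL(X),\mathbb{V})$ is perfect and has $\mathcal{P}$. Since $\mathcal{P}$ is closed hereditary and some $\mathbb{S}^{n}$ fails to have $\mathcal{P}$, Lemma~\ref{l2} applies and yields that $X$ is countably compact. The key step is to extract a $G_\delta$-diagonal for $X$ out of the perfectness of the hyperspace. To this end I would consider the subspace $SP^2(X)=\{A\in CL(X): 1\le|A|\le 2\}$. First, $SP^2(X)$ is closed in $(CL(X),\mathbb{V})$: if $|H|\ge 3$, then choosing three points of $H$ together with three pairwise disjoint open sets $U_1,U_2,U_3$ around them produces the Vietoris neighborhood $\langle U_1,U_2,U_3,X\rangle$ of $H$, every member of which has at least three points. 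Being a closed subspace of a perfect space, $SP^2(X)$ is therefore perfect. Next, within $SP^2(X)$ the set $E$ of singletons is closed, since a doubleton $\{a,b\}$ has a neighborhood $\langle U,V\rangle$ with $U,V$ disjoint whose intersection with $SP^2(X)$ consists only of doubletons; hence $E$ is a $G_\delta$ in $SP^2(X)$.

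To finish, I would pass to the natural map $q\colon X\times X\to SP^2(X)$ given by $q(x,y)=\{x,y\}$, which is continuous into the Vietoris topology and satisfies $q^{-1}(E)=\Delta_X$, the diagonal of $X\times X$. Writing $E=\bigcap_{n}V_n$ with each $V_n$ open in $SP^2(X)$ gives $\Delta_X=\bigcap_{n}q^{-1}(V_n)$, so $X$ has a $G_\delta$-diagonal. Now Chaber's theorem, that a countably compact space with a $G_\delta$-diagonal is compact, shows $X$ is compact, and the classical fact that a compact space with a $G_\delta$-diagonal is metrizable shows $X$ is metrizable; both may be cited from \cite{G1984}. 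Thus $X$ is compact metrizable.

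The main obstacle is the middle step: verifying that $SP^2(X)$ is closed and that the singletons form a $G_\delta$ inside it, and then correctly transporting this $G_\delta$ structure through the (finite-to-one) map $q$ to the diagonal of $X\times X$. Once the $G_\delta$-diagonal of $X$ is secured, the countable compactness furnished by Lemma~\ref{l2} makes the passage to compactness and then to metrizability automatic via the two standard metrization results.
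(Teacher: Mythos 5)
Your proof is correct and takes essentially the same route as the paper: countable compactness from Lemma~\ref{l2}, then a $G_\delta$-diagonal for $X$ obtained by pulling the set of singletons (a closed set, hence $G_\delta$ by perfectness) back through the continuous map $(x,y)\mapsto\{x,y\}$ into the symmetric square (the paper's $F_2(X)$, your $SP^2(X)$), and finally Chaber's theorem plus compact metrization cited from \cite{G1984}. The only difference is cosmetic: the paper obtains the singletons as a $G_\delta$ in all of $(CL(X),\mathbb{V})$ and then intersects with $F_2(X)$, rather than first passing to the closed subspace $SP^2(X)$ as you do, and like you it proves only the sufficiency as the substantive direction.
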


\begin{proof}
It suffices to prove the sufficiency. By Lemma ~\ref{l2}, $X$ is countably compact.  Next we prove that $X$ has
a $G_\delta$-diagonal. Since $X$ is a closed subset of $(CL(X),
\mathbb{V})$ (indeed, $X$ is the set $\{\{x\}: x\in X\}$), there exists a sequence $\{U_n: n\in \mathbb{N}\}$ of open
subsets of $(CL(X), \mathbb{V})$ such that $X=\bigcap_{n\in
\mathbb{N}}U_n$. Put $F_2(X)=\{B\subset X: |B|\leq 2\}$; then
$\{F_2(X)\cap U_n: n\in \mathbb{N}\}$ is a countable family of open
subsets of $F_2(X)$. Define $f_2: X\times X\to F_2(X)$ by $f_2(x,
y)=\{x, y\}$; then it is well known that $f_2$ is an open and closed
continuous mapping from $X^2$ to $F_2(X)$. Note that
$f^{-1}_2(X)=\{(x, x): x\in X\}=\Delta$ is the diagonal of $X$,
and that $\{f^{-1}_2(F_2(X)\cap U_n): n\in \mathbb{N}\}$ is a countable
family of open neighborhoods of $\Delta$ and $\bigcap_{n\in
\mathbb{N}}f^{-1}_2(U_n\cap F_2(X))=\Delta$, hence $X$ has a
$G_\delta$-diagonal; therefore, $X$ is compact metrizable by
\cite[Theorem 2.14]{G1984}.
\end{proof}

Now we can prove our first theorem.

{\bf Proof of Theorem~\ref{t100}.} Clearly, it suffices to prove that (1), (2), (3), (4) $\Rightarrow$ (5). By Lemma~\ref{l4}, we have (1) $\Rightarrow$ (5) and (3) $\Rightarrow$ (5).

(2) $\Rightarrow$ (5).  Assume that $(CL(X), \mathbb{V})$  is quasi-developable. Then, by Theorem~\ref{t1}, $X$ is  countably compact, thus it is metrizable by \cite[Theorem
8.5]{G1984} and \cite[Corollary 8.3(ii)]{G1984}.

(4) $\Rightarrow$ (5). Let $(CL(X), \mathbb{V})$ be symmetrizable; then $(CL(X), \mathbb{V})$ has countable tightness, hence from Theorem
~\ref{t1} and Corollary ~\ref{c5}, it follow that $X$ is first-countable and countably compact. Since a first-countable, symmetrizable space is
semi-stratifiable, $X$ has a $G_\delta$-diagonal. Hence $X$ is
metrizable by \cite[Theorem 2.14]{G1984}. Therefore, $X$ is compact metrizable. The proof is completed.

\smallskip
It was proved that if $(CL(X), \mathbb{V})$ is a $\sigma$-space
(i.e. a regular space with a $\sigma$-discrete network\footnote{A family $\mathscr{P}$ in a space $X$ is called a {\it network} for $X$ if, for each $x\in U$ with $U$ open in $X$, there exists $P\in\mathscr{P}$ such that $x\in P\subset U$.}), then $X$ is compact
metrizable by \cite[Theorem 4.14]{HP2002}. So it is natural to ask the
following question.

\begin{question}\label{q0}
If $(CL(X), \mathbb{V})$ has a $\sigma$-locally countable network,
is then $X$ compact metrizable?
\end{question}

We give a partial answer to Question~\ref{q0}. First, we give a lemma.

\begin{lemma}\label{l3}
If $X$ is a (regular) space having a $\sigma$-locally countable
network, then each singleton is a $G_\delta$-set.
\end{lemma}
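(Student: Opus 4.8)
The plan is, for a fixed point $x\in X$, to produce explicitly a countable family of open neighborhoods of $x$ whose intersection collapses to $\{x\}$. Write the $\sigma$-locally countable network as $\mathscr{P}=\bigcup_{n\in\mathbb{N}}\mathscr{P}_{n}$, where each $\mathscr{P}_{n}$ is locally countable. Since $\mathscr{P}_{n}$ is locally countable, there is an open neighborhood $W_{n}$ of $x$ meeting only countably many members of $\mathscr{P}_{n}$; enumerate these as $\{P_{n,k}:k\in\mathbb{N}\}$. The open sets I would assemble are the $W_{n}$ together with the sets $X\setminus\overline{P_{n,k}}$ taken over all pairs $(n,k)$ for which $x\notin\overline{P_{n,k}}$. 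Each of these is open and contains $x$, and there are only countably many of them, so their intersection $G$ is a $G_{\delta}$-set containing $x$.

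It then remains to check $G=\{x\}$. First I would record the separation step: given $y\neq x$, regularity (together with $T_{1}$-ness, which is part of the standing regularity assumption) yields an open $V\ni y$ with $x\notin\overline{V}$, and the network property supplies $P\in\mathscr{P}$ with $y\in P\subseteq V$, whence $y\in P$ and $x\notin\overline{P}$. Fix the index $n$ with $P\in\mathscr{P}_{n}$ and split on whether $P$ meets $W_{n}$. If $P\cap W_{n}\neq\emptyset$, then $P=P_{n,k}$ for some $k$, and since $x\notin\overline{P_{n,k}}$ the set $X\setminus\overline{P_{n,k}}$ is one of the sets defining $G$; as $y\in P\subseteq\overline{P_{n,k}}$ we get $y\notin G$. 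If instead $P\cap W_{n}=\emptyset$, then $P\subseteq X\setminus W_{n}$, so $y\notin W_{n}\supseteq G$ and again $y\notin G$. Either way $y\notin G$, so $G=\{x\}$, which is the desired conclusion.

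The technical heart is the dichotomy in the last step: every point $y\neq x$ must be separated from $x$ by one of the countably many chosen open sets, and the mechanism is that the separating network element $P$ is either \emph{local} to $x$ (it meets $W_{n}$, hence is captured among the countably many $P_{n,k}$ and excluded through its closure) or \emph{far} from $x$ (it misses $W_{n}$, hence is excluded by $W_{n}$ itself). The main obstacle I anticipate is precisely arranging the bookkeeping so that only countably many open sets are used while still covering both cases; this is exactly what local countability of each $\mathscr{P}_{n}$ buys, since it bounds the number of network members near $x$ that need individual treatment. Regularity is used only to pass from ``$y\in P$'' to ``$x\notin\overline{P}$'', ensuring that $X\setminus\overline{P}$ is a genuine open neighborhood of $x$.
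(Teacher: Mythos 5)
Your proof is correct and takes essentially the same approach as the paper's: local countability of $\mathscr{P}_n$ captures countably many network elements near $x$, the witnessing neighborhoods ($W_n$ for you, $U_n$ in the paper) are kept in the intersection to handle network elements that miss them, and complements of (closures of) the captured elements avoiding $x$ exclude every other point. The only cosmetic differences are that the paper first replaces the network by a closed one (regularity makes the closures again a locally countable network) and then argues by contradiction, whereas you keep the original network, apply regularity at the separation step, and argue directly via the explicit near/far dichotomy.
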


\begin{proof}
Let $\mathcal{P}=\bigcup_{n\in \mathbb{N}}\mathcal{P}_n$ be a
$\sigma$-locally countable network of $X$, where each $\mathcal{P}_n$ is locally countable. Since $X$ is regular, we may
assume that each element of $\mathcal{P}$ is closed. Fix any $x\in X$.
For $n\in \mathbb{N}$, let $U_n$ be an open neighborhood of $x$ such
that $U_n$ intersects at most countably many elements of $\mathcal{P}_n$, and let $\mathcal{P}_n'=\{P\in\mathcal{P}_n: P\cap U_{n}\neq\emptyset\}$.
For each $n\in\mathbb{N}$, enumerate $\{P\in \mathcal{P}_n', x\notin
P\}$ as $\{P_{n, i}: i\in \mathbb{N}\}$, and let $V_{n, i}=X\setminus P_{n, i}$ for each $i\in \mathbb{N}$;
then $V_{n, i}$ is open and $x\in V_{n, i}$ for each $i\in \mathbb{N}$. Now it suffices to prove the following claim.

\smallskip
{\bf Claim:} $\{x\}=(\bigcap_{n\in\mathbb{N}}U_n)\cap (\bigcap_{n, i\in\mathbb{N}}V_{n, i})$.

\smallskip
Suppose not, then there exists $y\neq x$ such that $y\in(\bigcap_{n\in\mathbb{N}}U_n)\cap (\bigcap_{n, i\in\mathbb{N}}V_{n, i})$. Let
$V$ be an open neighborhood of $y$ with $x\notin V$. Pick $P\in
\mathcal{P}$ with $y\in P\subset V$. Then $P=P_{k, j}$ for some $k,
j\in \mathbb{N}$, and $y\notin V_{k, j}=X\setminus P_{k, j}$. This
is a contradiction.
\end{proof}

\begin{theorem}
(MA($\omega_1)+TOP$) A (regular) space $X$ is compact metrizable if
and only if $(CL(X), \mathbb{V})$ has a $\sigma$-locally countable
network.
\end{theorem}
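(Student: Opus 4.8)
The plan is to split off the trivial direction and then reduce the substantive direction to a single statement about perfectness, so that the already-established Lemma~\ref{l4} can finish the argument. For the necessity, no set theory is needed: if $X$ is compact metrizable, then by the Hol\'a--Pelant--Zsilinszky theorem \cite{HPZ2003} recalled in the introduction, $(CL(X),\mathbb V)$ even has a $\sigma$-discrete network, and a $\sigma$-discrete network is in particular $\sigma$-locally countable. So I would dispose of the "only if" direction in one line.

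For the sufficiency I would take $\mathcal P$ to be the property "has a $\sigma$-locally countable network" and aim to apply Lemma~\ref{l4}, whose sufficiency clause says that if $(CL(X),\mathbb V)$ is perfect and has such a property $\mathcal P$, then $X$ is compact metrizable. Two bookkeeping facts make Lemma~\ref{l4} available. First, $\mathcal P$ is hereditary (hence closed hereditary), since the traces of a $\sigma$-locally countable network on a subspace are again $\sigma$-locally countable. Second, some power $\mathbb S^n$ must fail $\mathcal P$; in fact $\mathbb S$ itself does: being hereditarily Lindel\"of, any locally countable family in $\mathbb S$ is countable, so a $\sigma$-locally countable network on $\mathbb S$ would be a countable network, making $\mathbb S$ cosmic; but then $\mathbb S^2$ would be hereditarily Lindel\"of, contradicting that the anti-diagonal is an uncountable closed discrete subspace of $\mathbb S^2$. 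With these two facts in place, the entire sufficiency reduces to proving the one statement that $(CL(X),\mathbb V)$ is \emph{perfect}.

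The heart of the theorem—and the only place MA($\omega_1$)$+$TOP should be needed—is exactly this upgrade from "points are $G_\delta$" to "perfect." By Lemma~\ref{l3}, applied to the regular space $(CL(X),\mathbb V)$ with its given $\sigma$-locally countable network, every singleton of $(CL(X),\mathbb V)$ is a $G_\delta$-set. For a $\sigma$-locally \emph{finite} network one upgrades this to perfectness in ZFC, because a locally finite union of closed network members lying inside a fixed open set is closed, exhibiting that open set as $F_\sigma$; this argument collapses for a merely $\sigma$-locally \emph{countable} network, since a locally countable union of closed sets need not be closed. Filling precisely this gap is, I expect, the role of the set-theoretic hypothesis: under MA($\omega_1$)$+$TOP one argues that the relevant locally countable families of closed network members still assemble into $F_\sigma$ sets, so that every open subset of $(CL(X),\mathbb V)$ is $F_\sigma$ and the hyperspace is perfect. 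Once perfectness is secured, Lemma~\ref{l4} immediately delivers that $X$ is compact metrizable.

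The main obstacle is thus the perfectness step of the previous paragraph: proving, under MA($\omega_1$)$+$TOP, that a regular space carrying a $\sigma$-locally countable network has every closed set $G_\delta$. Everything surrounding it is routine use of the earlier lemmas. If proving full perfectness of the hyperspace turns out to be awkward, a slightly cheaper fallback target would be to show only that the closed set of singletons $\{\{x\}:x\in X\}$ is $G_\delta$ in $(CL(X),\mathbb V)$, then rerun the diagonal computation from the proof of Lemma~\ref{l4} to get a $G_\delta$-diagonal on $X$, and combine it with countable compactness of $X$ (which comes for free from Lemma~\ref{l2}, since $\mathbb S$ fails $\mathcal P$) via the ZFC fact \cite[Theorem 2.14]{G1984} that a countably compact space with a $G_\delta$-diagonal is compact metrizable; but establishing perfectness outright is the cleaner route and lets Lemma~\ref{l4} carry the remainder of the argument.
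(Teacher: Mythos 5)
Your setup is fine: the necessity via \cite{HPZ2003}, the heredity of the property ``has a $\sigma$-locally countable network,'' and the fact that $\mathbb{S}$ itself fails this property (so that Lemma~\ref{l2} yields countable compactness of $X$) are all correct. But the proposal has a genuine gap at exactly the step you yourself flag as the heart of the matter: you never prove that $(CL(X), \mathbb{V})$ is perfect, you only conjecture that under MA($\omega_1)+$TOP ``the relevant locally countable families of closed network members still assemble into $F_\sigma$ sets.'' No argument is given for this, and it is not how the set-theoretic hypothesis actually functions; there is no known upgrade from ``regular with a $\sigma$-locally countable network'' to ``perfect'' under MA($\omega_1)+$TOP, and the only visible route to perfectness of the hyperspace would be to first prove $X$ compact metrizable, which is circular. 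Your fallback suffers from the same defect: Lemma~\ref{l3} gives only that each \emph{point} of $(CL(X), \mathbb{V})$ is a $G_\delta$-set, whereas the fallback needs the uncountable closed copy $\{\{x\}: x\in X\}$ of $X$ to be a $G_\delta$-set in the hyperspace, and again no argument is offered for that.

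The paper's proof avoids perfectness (and Lemma~\ref{l4}) altogether and uses the set theory at a different, concrete place. Having obtained countable compactness of $X$ and, via Lemma~\ref{l3}, that singletons of $(CL(X), \mathbb{V})$ are $G_\delta$-sets, it invokes \cite[Proposition~4.3]{HPZ2003} to conclude that $X$ is hereditarily separable; MA($\omega_1)+$TOP is used \emph{only} for the implication ``hereditarily separable (regular) $\Rightarrow$ Lindel\"of.'' Lindel\"ofness then collapses the $\sigma$-locally countable network of $X$ (inherited by $X$ as a closed subspace of the hyperspace) to a countable network, since in a Lindel\"of space every locally countable family is countable, and a countably compact space with a countable network is compact metrizable by \cite[Corollary 4.7(ii)]{G1984}. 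So the missing idea in your write-up is precisely this chain: points $G_\delta$ in the hyperspace $\Rightarrow$ hereditary separability of $X$ $\Rightarrow$ (under MA($\omega_1)+$TOP) Lindel\"ofness $\Rightarrow$ countable network $\Rightarrow$ compact metrizability; without it, your reduction to Lemma~\ref{l4} cannot be completed.
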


\begin{proof}
It suffices to prove the sufficiency. From Theorem ~\ref{t1}, it follows that $X$ is countably compact.
By Lemma ~\ref{l3}, each singleton of $(CL(X), \mathbb{V})$ is a
$G_\delta$-set, then $X$ is hereditarily separable by \cite[Proposition~4.3]{HPZ2003}. Under
MA($\omega_1)$+TOP, $X$ is Lindel\"of. Since a Lindel\"of space with
a $\sigma$-locally countable network has a countable network,
$X$ is a countably compact space with a countable network, hence it
is compact metrizable by \cite[Corollary 4.7(ii)]{G1984}.
\end{proof}

If $X$ is a $k$-space\footnote{A space $X$ is called a {\it $k$-space} if, for each $A\subset X$, $A$ is closed in $X$ provide $K\cap A$ is closed for each compact subset $K$ of $X$.}, we have the following result.

\begin{theorem}\label{ttt}
Let $X$ be  a (regular) $k$-space. Then $X$ is compact metrizable if
and only if $(CL(X), \mathbb{V})$ has a point-countable $k$-network.
\end{theorem}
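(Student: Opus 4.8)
The plan is to prove both implications of Theorem~\ref{ttt}, with essentially all of the content in the sufficiency direction. For the necessity, suppose $X$ is compact metrizable. By the theorem of Hol\'a, Pelant and Zsilinszky quoted in the introduction (see \cite{HPZ2003}), $(CL(X),\mathbb{V})$ is then metrizable, so it carries a $\sigma$-locally finite base. Any base is a $k$-network, and a $\sigma$-locally finite family is point-finite, hence point-countable; thus $(CL(X),\mathbb{V})$ has a point-countable $k$-network. This direction needs no new idea.

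For the sufficiency I would first reduce to a statement about $X$ alone. Having a point-countable $k$-network is closed hereditary: if $\mathcal{P}$ is such a network for a space $Y$ and $Z$ is closed in $Y$, then $\{P\cap Z: P\in\mathcal{P}\}$ is a point-countable $k$-network for $Z$, since any compact $K\subseteq Z$ lying in a relatively open $V\cap Z$ is compact in $Y$ and contained in $V$, so a finite subfamily of $\mathcal{P}$ trapped between $K$ and $V$ restricts to one trapped between $K$ and $V\cap Z$. By Proposition~\ref{p1}(2), $\mathbb{S}=\mathbb{S}^{1}$ has no point-countable $k$-network, so Lemma~\ref{l2} (equivalently Theorem~\ref{t1}) applies and gives that $X$ is countably compact. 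Moreover, as in the proof of Lemma~\ref{l4}, the set of singletons $\{\{x\}:x\in X\}$ is a closed copy of $X$ inside $(CL(X),\mathbb{V})$, so $X$ itself inherits a point-countable $k$-network. Thus the sufficiency is reduced to the self-contained claim: a countably compact regular $k$-space with a point-countable $k$-network is compact and metrizable.

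To establish this claim I would assemble the following ingredients. First, every compact subset of a space with a point-countable $k$-network is metrizable (a standard fact, see \cite{G1984}), because the restricted $k$-network is a point-countable \emph{network} on the compact set. Since $X$ is then a $k$-space all of whose compact subsets are first countable, $X$ is sequential, and a sequential countably compact space is sequentially compact (if a sequence had no convergent subsequence, the usual thinning argument shows its range would be an infinite closed discrete set, contradicting countable compactness). The endgame is to produce a $G_\delta$-diagonal on $X$: once this is in hand, countable compactness forces $X$ to be compact metrizable by \cite[Theorem~2.14]{G1984}. I would obtain the diagonal by the $F_2(X)$ device used in the proof of Lemma~\ref{l4}: writing the (countably compact) copy of $X$ inside $(CL(X),\mathbb{V})$ as $\bigcap_{n}U_{n}$ with each $U_{n}$ open and pulling back along the open, closed, continuous map $f_{2}\colon X^{2}\to F_{2}(X)$, $f_{2}(x,y)=\{x,y\}$, gives $\Delta=\bigcap_{n}f_{2}^{-1}(U_{n}\cap F_{2}(X))$.

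The main obstacle is precisely this $G_\delta$-step, namely verifying that the closed countably compact copy of $X$ is a $G_\delta$-subset of $(CL(X),\mathbb{V})$ (equivalently, that a compact/countably compact subset of a space with a point-countable $k$-network is a $G_\delta$-set). This is where the point-countability of the $k$-network and the $k$-space hypothesis on $X$ must be combined, and it is the only genuinely delicate ingredient; everything else is bookkeeping or a direct appeal to Lemma~\ref{l2}, the closed-subspace reduction, and the classical compact metrization theorem \cite[Theorem~2.14]{G1984}. An alternative packaging would prove that under these hypotheses $(CL(X),\mathbb{V})$ is \emph{perfect} and then quote Lemma~\ref{l4} verbatim with $\mathcal{P}=$ ``point-countable $k$-network''; but the difficulty merely migrates to proving perfectness of $(CL(X),\mathbb{V})$, which again reduces to the same $G_\delta$-type statement, so I would attack the $G_\delta$-step head-on via the outer-network argument for compacta in spaces with point-countable $k$-networks.
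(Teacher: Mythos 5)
Your overall skeleton matches the paper's: the necessity via metrizability of $(CL(X),\mathbb{V})$ for compact metrizable $X$, the observation that point-countable $k$-networks are closed hereditary, the appeal to Proposition~\ref{p1}(2) and Lemma~\ref{l2}/Theorem~\ref{t1} to get countable compactness of $X$, and the passage to the singleton copy of $X$ so that $X$ itself inherits a point-countable $k$-network. At that point the paper's proof consists of one citation: \cite{GMT1984} contains precisely the theorem that a countably compact ($k$-)space with a point-countable $k$-network is compact and metrizable. You instead undertake to prove this reduced claim from scratch, and that is where the proposal breaks down.

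There are two concrete gaps. First, your justification that compact subsets of $X$ are metrizable --- ``because the restricted $k$-network is a point-countable \emph{network} on the compact set'' --- proves nothing: in any $T_1$ space the family of all singletons is a point-finite network, so having a point-countable network is vacuous and certainly does not imply metrizability of a compactum (e.g.\ $\beta\mathbb{N}$). What is actually needed is that a compact space with a point-countable \emph{$k$-network} is metrizable, and that is a nontrivial theorem of \cite{GMT1984} (it is not the sort of fact found in \cite{G1984}); in other words, at this step you are silently invoking the hard core of the very result you are trying to re-prove. Second, the endgame you describe --- showing the singleton copy of $X$ is a $G_\delta$-subset of $(CL(X),\mathbb{V})$ so that the $f_2$ pullback yields a $G_\delta$-diagonal --- is explicitly left unproven: you yourself call it ``the main obstacle'' and ``the only genuinely delicate ingredient,'' but no argument is given, and it is not a standard fact (Lemma~\ref{l4} does not help, since there perfectness of $(CL(X),\mathbb{V})$ is a \emph{hypothesis}). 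So the proposal is incomplete exactly where all the mathematical content lies. Note finally that the detour is unnecessary: once you have reduced to ``a countably compact regular $k$-space with a point-countable $k$-network is compact metrizable,'' that statement applies directly to $X$ and is verbatim what \cite{GMT1984} supplies; citing it, as the paper does, finishes the proof without any $F_2$ or $G_\delta$ machinery.
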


\begin{proof}
By Theorem ~\ref{t1}, $X$ is countably compact. Since $X$ is $k$-space
with a point-countable $k$-network, it follows that $X$ is compact metrizable
space \cite{GMT1984}.
\end{proof}

We don't know whether we can delete the condition ``regular $k$-space'' in Theorem~\ref{ttt}, hence we have the following question.

\begin{question}
Suppose $(CL(X), \mathbb{V})$ has a point-countable $k$-network, is
$X$ metrizable?
\end{question}

The following theorem give a characterization of $X$ such that $(CL(X), \mathbb{V})$ has a BCO under the assumption of MA($\omega_1)+TOP$.

\begin{theorem}
(MA($\omega_1)+TOP$) A (regular) space $X$ is compact metrizable if
and only if $(CL(X), \mathbb{V})$ has a BCO.
\end{theorem}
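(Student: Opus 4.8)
The plan is to follow the same template as the preceding theorem on $\sigma$-locally countable networks, keeping the two directions separate. The necessity is immediate: if $X$ is compact metrizable, then by the Hol\'{a}--Pelant--Zsilinszky theorem quoted in the introduction $(CL(X),\mathbb{V})$ is metrizable, and every metrizable space has a BCO (the balls of radius $1/n$ furnish one), so $(CL(X),\mathbb{V})$ has a BCO.

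For the sufficiency, suppose $(CL(X),\mathbb{V})$ has a BCO. First I would invoke Theorem~\ref{t1}: since a BCO is closed hereditary and, by Proposition~\ref{p1}(3), $\mathbb{S}=\mathbb{S}^{1}$ fails to have a BCO, it follows that $X$ is countably compact. Next I would observe that a space with a BCO is automatically first-countable. Indeed, given the defining sequence $\{\mathcal{B}_n\}$ of bases and a point $H\in CL(X)$, one chooses inductively $b_n\in\mathcal{B}_n$ with $H\in b_{n}\subseteq b_{n-1}$ (possible because each $\mathcal{B}_n$ is a base and $b_{n-1}$ is an open set containing $H$); the resulting decreasing chain $\{b_n\}$ is then a base at $H$ by the BCO condition. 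Consequently every singleton of $(CL(X),\mathbb{V})$ is a $G_\delta$-set, so by \cite[Proposition~4.3]{HPZ2003} the space $X$ is hereditarily separable.

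At this point I would bring in the set-theoretic hypothesis exactly as in the proof of the $\sigma$-locally countable network theorem: under MA($\omega_1$)$+$TOP there are no $S$-spaces, so the regular hereditarily separable space $X$ is (hereditarily) Lindel\"of. A countably compact Lindel\"of space is compact, whence $X$ is compact. Then by \cite[Corollary~13]{CLP2002} the hyperspace $(CL(X),\mathbb{V})$ is compact; being compact it is paracompact, and a paracompact space with a BCO is metrizable by Arhangel'ski\u{\i}'s metrization theorem. Finally, $(CL(X),\mathbb{V})$ metrizable forces $X$ to be compact metrizable by the Hol\'{a}--Pelant--Zsilinszky theorem, which completes the argument.

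The two steps I expect to be delicate are the following. First, the passage from hereditary separability to Lindel\"ofness is exactly where MA($\omega_1$)$+$TOP is used, through the non-existence of $S$-spaces; this is precisely the ingredient already employed in the previous theorem, so it transfers verbatim. Second, one must be careful to apply "compact $+$ BCO $\Rightarrow$ metrizable" to the hyperspace $(CL(X),\mathbb{V})$ rather than to $X$ itself, since $X$ being compact and hereditarily separable does not by itself yield metrizability (for instance, the double arrow space is compact and hereditarily separable but not metrizable). Everything else reduces to the lemmas and cited results already established in the paper.
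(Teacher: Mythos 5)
Your proof is correct, and its first three steps coincide exactly with the paper's: Theorem~\ref{t1} (via closed-heredity of BCO and Proposition~\ref{p1}(3)) gives countable compactness of $X$; first countability of a BCO space makes singletons of $(CL(X),\mathbb{V})$ $G_\delta$-sets, so $X$ is hereditarily separable by \cite[Proposition~4.3]{HPZ2003}; and MA($\omega_1$)$+$TOP upgrades this to Lindel\"ofness. Where you diverge is the endgame. The paper stays on the base space: since $X$ sits in $(CL(X),\mathbb{V})$ as the closed subspace $\{\{x\}:x\in X\}$, it inherits a BCO, and then ``Lindel\"of $+$ BCO $\Rightarrow$ metrizable'' \cite[Theorem 6.6]{G1984} applied to $X$ itself finishes the proof (countably compact $+$ Lindel\"of giving compactness). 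You instead pass to the hyperspace: $X$ compact $\Rightarrow$ $(CL(X),\mathbb{V})$ compact \cite[Corollary 13]{CLP2002}, hence paracompact (it is Hausdorff because $X$ is regular), then Arhangel'ski\u{\i}'s ``paracompact $+$ BCO $\Rightarrow$ metrizable'', and finally the Hol\'a--Pelant--Zsilinszky characterization to pull metrizability back to $X$. Both routes are sound; the paper's is shorter and needs neither the compactness transfer nor the HPZ theorem, while yours has the mild advantage that the BCO metrization theorem is applied only to the space you were handed it on, so you never need to pass the BCO down to a subspace (you use closed-heredity only inside Theorem~\ref{t1}, where the paper needs it too). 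Your explicit argument that a BCO yields first countability also fills in a step the paper dismisses as ``obvious.''
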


\begin{proof}
By Theorem ~\ref{t1}, $X$ is countably compact. Moreover, it is obvious that each singleton of
$(CL(X), \mathbb{V})$ is a $G_\delta$-set, then $X$ is hereditarily
separable by \cite[Proposition~4.3]{HPZ2003}, hence it is Lindel\"of under MA($\omega_1$)+TOP. A
Lindel\"of space having a BCO is metrizable by \cite[Theorem
6.6]{G1984}, therefore, $X$ is compact and metrizable.
\end{proof}

Next we prove the second main theorems in this section, see Theorem~\ref{c3}. First we give some concepts and

A family $\mathcal{B}$ of open subsets of a space $X$ is called an
{\it external $\pi$-base of a subset $A$} if whenever $A\cap U\neq
\emptyset$ with $U$ open in $X$, there is $B\in \mathcal{B}$ such
that $A\cap B\neq \emptyset$ and $A\cap B\subset U$. We denote $$e\pi
w(A)=\inf\{|\mathcal{B}|: \mathcal{B}\  \mbox{is an external}\ \pi\mbox{-base of}\ A\}.$$ If $\mathcal{B}$ is an external $\pi$-base of $A$, then it easily see that $\{B\cap A: B\in \mathcal{B}\}$ is a $\pi$-base of $A$.

It was proved that $$\chi(CL(X), \mathbb{V})= hd(X)\cdot \sup\{\chi(H,
X): H\in CL(X)\}$$ (\cite[Theorem 2.2(5)]{HP2002}). We describe this
result in terms of external $\pi$-base.

\begin{proposition}\label{t5}
For a space $X$, we have $$\chi(CL(X), \mathbb{V})=\sup\{\chi(H, X): H\in CL(X)\} \cdot
\sup\{e\pi w(H): H\in CL(X)\}.$$
\end{proposition}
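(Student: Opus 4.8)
The plan is to establish the identity $\chi(CL(X),\mathbb{V})=\sup\{\chi(H,X):H\in CL(X)\}\cdot\sup\{e\pi w(H):H\in CL(X)\}$ by proving two inequalities. Since the authors have already recorded the formula $\chi(CL(X),\mathbb{V})=hd(X)\cdot\sup\{\chi(H,X):H\in CL(X)\}$ from \cite[Theorem 2.2(5)]{HP2002}, it suffices to show that, modulo the common factor $\sup\{\chi(H,X):H\in CL(X)\}$, the quantity $hd(X)$ can be replaced by $\sup\{e\pi w(H):H\in CL(X)\}$. Let me write $\kappa=\sup\{\chi(H,X):H\in CL(X)\}$, $\lambda=hd(X)$, and $\mu=\sup\{e\pi w(H):H\in CL(X)\}$; the goal is then $\kappa\cdot\lambda=\kappa\cdot\mu$, which I will get by showing $\mu\le\lambda$ and $\kappa\cdot\mu\ge\kappa\cdot\lambda$ (equivalently bounding the full character below by $\kappa\cdot\mu$ directly).

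First I would prove $\mu\le\lambda=hd(X)$. Fix $H\in CL(X)$ and a dense subset $D$ of $X$ witnessing $hd(X)$ on the open subspace $X\setminus$ (nothing removed) — more precisely, pick a dense set $D$ in $X$ with $|D|\le hd(X)$ and let $\mathcal{B}$ be a $\pi$-base-like family built from basic open sets meeting $H$; the point is that an external $\pi$-base of $H$ is exactly a family of open sets that $\pi$-approximates $H$ from outside, and such a family of size $hd(X)$ can always be extracted because hereditary density controls the number of ``essentially distinct'' open traces near $H$. So $e\pi w(H)\le hd(X)$ for every $H$, giving $\mu\le\lambda$ and hence $\kappa\cdot\mu\le\kappa\cdot\lambda=\chi(CL(X),\mathbb{V})$.

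The substantive direction is the reverse inequality $\chi(CL(X),\mathbb{V})\le\kappa\cdot\mu$, and I expect this to be the main obstacle. Here I would take an arbitrary $H\in CL(X)$ and construct a neighborhood base at $H$ in $(CL(X),\mathbb{V})$ of size at most $\kappa\cdot e\pi w(H)$. A basic Vietoris neighborhood $\langle U_1,\dots,U_k\rangle$ of $H$ splits into two pieces: the ``upper'' part, controlling $H\subset\bigcup U_i$, which is governed by the character $\chi(H,X)$ of $H$ in $X$ (choose a local base at $H$ of size $\kappa$ for the sets $U=\bigcup_i U_i$), and the ``lower'' part, controlling the nonempty intersections $H\cap U_j\ne\emptyset$, which is precisely what an external $\pi$-base of $H$ handles: given any open $U_j$ meeting $H$, an external $\pi$-base supplies $B\in\mathcal{B}$ with $\emptyset\ne H\cap B\subset U_j$, so $B^-$ refines $U_j^-$ at $H$. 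Forming finite combinations $\langle O\cap B_1,\dots,O\cap B_m, O\rangle$, where $O$ ranges over a local base at $H$ of size $\kappa$ and the $B_i$ range over finite subsets of a fixed external $\pi$-base of size $\mu$, yields a family of cardinality $\kappa\cdot\mu^{<\omega}=\kappa\cdot\mu$ (for infinite $\mu$); verifying that this family is genuinely a neighborhood base at $H$ is the crux. The key check is that every basic $\langle U_1,\dots,U_k\rangle\ni H$ is refined by such a member, which follows by choosing $O$ from the local base with $H\subset O\subset\bigcup_i U_i$ and then, for each $j$, selecting $B_j$ from the external $\pi$-base with $\emptyset\ne H\cap B_j\subset U_j$ and $B_j\subset O$. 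Taking the supremum over all $H\in CL(X)$ then gives $\chi(CL(X),\mathbb{V})\le\kappa\cdot\mu$, completing the proof. The delicate point to get right is that the lower part really needs only an external $\pi$-base rather than a genuine base, since we are free to shrink each $U_j$ to any open set meeting $H$ inside it — this is exactly where $e\pi w$ is the sharp parameter and where the improvement over the raw $hd(X)$ bound comes from.
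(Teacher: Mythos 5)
Your second inequality (building a neighborhood base at $H$ in $(CL(X),\mathbb{V})$ from a local base at $H$ in $X$ together with an external $\pi$-base of $H$) is essentially the paper's own argument for that direction and is fine in outline. The genuine gap is in your first direction: the claim $\sup\{e\pi w(H):H\in CL(X)\}\le hd(X)$ is not only unproved --- the phrase ``hereditary density controls the number of essentially distinct open traces'' is not an argument --- it is false. With the paper's definition, an external $\pi$-base of $H$ is, up to taking traces and extensions of relatively open sets, exactly a $\pi$-base of the subspace $H$, so $e\pi w(H)=\pi w(H)$, and $\pi$-weight is not bounded by hereditary density. Concretely, let $X$ be a countable dense subspace of $2^{\mathfrak{c}}$ and take $H=X\in CL(X)$. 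Then $hd(X)=\omega$; but if $d\in X$ and $\mathcal{P}$ is any family of fewer than $\mathfrak{c}$ nonempty relatively open sets, shrink each member to the trace of a basic box: the finitely many coordinates supporting these boxes miss some $\alpha<\mathfrak{c}$, and then, by density of $X$, no member of $\mathcal{P}$ is contained in the neighborhood $\{z:z(\alpha)=d(\alpha)\}\cap X$ of $d$. Hence $\pi\chi(d,X)\ge\mathfrak{c}$, so $e\pi w(X)=\pi w(X)\ge\mathfrak{c}>\omega=hd(X)$. What is true, and all you actually need, is the product inequality $\kappa\cdot\mu\le\kappa\cdot\lambda$: for each $H$ take a dense $D_H\subset H$ with $|D_H|\le hd(X)$ and, at each $y\in D_H$, a local base of size $\le\chi(\{y\},X)\le\kappa$; the union of these local bases is an external $\pi$-base of $H$, so $e\pi w(H)\le hd(X)\cdot\kappa$. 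Your inequality fails precisely because you dropped the character factor $\kappa$. (The proposition itself is not contradicted by the example, since there $\kappa\ge\chi(\{d\},X)\ge\mathfrak{c}$, so both sides equal $\kappa$.)

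Beyond this, note that your route differs from the paper's: you bootstrap from the Hol\'a--Pelant formula $\chi(CL(X),\mathbb{V})=hd(X)\cdot\sup\{\chi(H,X):H\in CL(X)\}$, whereas the paper proves both inequalities directly. In particular, for the direction you got wrong the paper never compares $e\pi w$ with $hd$: given a local base $\{\langle U_1(\alpha),\ldots,U_{k_\alpha}(\alpha)\rangle:\alpha<\kappa\}$ at $H$ in the hyperspace, it observes that the unions $\bigcup_{j\le k_\alpha}U_j(\alpha)$ form a local base at $H$ in $X$, and, via Michael's Lemma 2.3.1 applied to $\widehat{U}_\alpha\subset\langle V,X\rangle$, that the constituent sets $U_j(\alpha)$ already form an external $\pi$-base of $H$. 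That extraction is no harder than the density argument you attempted and is correct; replace your first direction by it (or by the repaired inequality above). A final subtlety affecting both your write-up and the paper's: the definition of external $\pi$-base as stated yields only $H\cap B_j\subset U_j$, while your verification that $\langle O\cap B_1,\ldots,O\cap B_m,O\rangle\subset\langle U_1,\ldots,U_k\rangle$ (and the paper's analogous step) needs $B_j\subset U_j$; the stronger property is exactly what the Michael-lemma extraction, or the dense-set construction above, delivers, so the result stands, but you must say which version of the notion you are using.
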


\begin{proof}
 Suppose $\chi(CL(X), \mathbb{V})\leq\kappa$. Fix any $H\in CL(X)$, and let
$\{\widehat{U}_\alpha: \alpha< \kappa\}$  be a local base at $H$ in $(CL(X), \mathbb{V})$. We write $\widehat{U}_\alpha=\langle U_1(\alpha), ..., U_{k_\alpha}(\alpha)\rangle$
for any $\alpha<\kappa$, where each $k_{\alpha}\in\mathbb{N}$. Let $W_\alpha=\cup_{j\leq k_\alpha}U_j(\alpha)$ for each $\alpha$; then it
is easy to check that $\{W_\alpha: \alpha<\kappa\}$ is a local base at $H$ in
$X$. Therefore, $\sup\{\chi(H, X): H\in CL(X)\}<\kappa$. Next we prove that the family $\mathcal{B}=\{U_j(\alpha): \alpha<\kappa, j\leq k_\alpha\}$ is an
external $\pi$-base of $H$. Indeed, let $V$ be an open subset of $X$ with $V\cap H\neq \emptyset$; then
$\langle V, X\rangle$ is a neighborhood of $H$, hence there exists $\alpha<\kappa$ such that
such that $\widehat{U}_\alpha\subset
\langle V, X\rangle$, then it follows from \cite[Lemma 2.3.1]{M1951} that $V$ contains
$U_j(\alpha)$ for some $j\leq k_{\alpha}$. Therefore, $\mathcal{B}$ is an external $\pi$-base
of $H$, that is, $\sup\{e\pi w(H): H\in CL(X)\}\leq \kappa$.

\medskip
 Suppose $\sup\{\chi(H, X): H\in CL(X)\}\leq\kappa$ and $\sup\{e\pi w(H):
H\in CL(X)\}\leq\kappa$. Fix any $H\in CL(X)$, let $\mathcal{W}$ be an
external $\pi$-base of $H$ in $X$ with $|\mathcal{W}|<\kappa$, and
let $\mathcal{U}=\{U_\alpha: \alpha<\kappa\}$ be a local base at $H$ in $X$. We claim that
 $$\{\langle W_1\cap U, ... , W_r\cap U, U\rangle:
\{W_1, ..., W_r\}\in \mathcal{W}^{<\omega}, U\in \mathcal{U}\}$$ is a
local base at $H$ in $(CL(X), \mathbb{V})$.

Indeed, let $\langle V_1, ... , V_p\rangle$ be an open neighborhood of $H$
in $(CL(X), \mathbb{V})$; then $H\subset \bigcup_{j\leq p}V_j$ and
$H\cap V_j\neq \emptyset$ for each $j\leq p$. Pick $U'\in
\mathcal{U}$ such that $U'\subset \bigcup_{j\leq p}V_j$, and $W_j\in
\mathcal{W}$ such that $W_j\subset V_j$ for each $j\leq p$. Then $H\in
\langle W_1\cap U', ... , W_p\cap U', U'\rangle\subset \langle V_1,
... , V_p\rangle$.
\end{proof}

By Proposition~\ref{t5}, it easily see that the second main theorem holds, which gives a partial answer to Problem~\ref{p6}.

\begin{theorem}\label{c3}
Let $X$ be a space. Then $(CL(X), \mathbb{V})$ is first-countable if and only if $X$ is a
$D_1$-space and each closed subset of $X$ has countable external
$\pi$-base.
\end{theorem}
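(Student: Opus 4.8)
The plan is to deduce the theorem directly from Proposition~\ref{t5}, which already expresses the character of the whole hyperspace as a product of two suprema, so almost no further work is needed. Recall that $(CL(X),\mathbb{V})$ is first-countable precisely when $\chi(CL(X),\mathbb{V})\leq\omega$. By Proposition~\ref{t5},
$$\chi(CL(X),\mathbb{V})=\sup\{\chi(H,X):H\in CL(X)\}\cdot\sup\{e\pi w(H):H\in CL(X)\},$$
so the entire argument reduces to analyzing when the right-hand product is countable.

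First I would match the two factors with the two conditions in the statement. The inequality $\sup\{\chi(H,X):H\in CL(X)\}\leq\omega$ asserts that every nonempty closed subset of $X$ has countable character in $X$; this is exactly the condition that $X$ be a $D_1$-space (equivalently, $cl\chi(X)=\omega$). The inequality $\sup\{e\pi w(H):H\in CL(X)\}\leq\omega$ asserts that every nonempty closed subset of $X$ admits a countable external $\pi$-base. Thus the two conjuncts appearing on the right-hand side of the theorem are, word for word, the statements that each of the two suprema in the displayed formula is at most $\omega$.

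The remaining ingredient is the elementary cardinal-arithmetic fact that, for infinite cardinals $\kappa$ and $\lambda$, one has $\kappa\cdot\lambda=\max\{\kappa,\lambda\}$, so that $\kappa\cdot\lambda\leq\omega$ holds if and only if both $\kappa\leq\omega$ and $\lambda\leq\omega$. Since each of the two cardinal functions above is taken to be at least $\omega$ (this is the convention $\tau\geq\omega$ built into $cl\chi$, and is standard for $e\pi w$ of a nonempty set), I may apply this observation to the product formula. This gives both implications simultaneously: $(CL(X),\mathbb{V})$ is first-countable iff $\chi(CL(X),\mathbb{V})\leq\omega$ iff both suprema are countable iff $X$ is a $D_1$-space and each closed subset of $X$ has a countable external $\pi$-base.

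I do not expect a genuine obstacle at this stage, because the substantive content has already been established in Proposition~\ref{t5}; the present theorem is a clean reformulation of that character formula. The only points demanding slight care are the bookkeeping that identifies $\sup\{\chi(H,X):H\in CL(X)\}\leq\omega$ with the $D_1$-property, and the verification that both factors are infinite cardinals so that the $\max$/product reduction is legitimate.
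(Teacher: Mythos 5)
Your proposal is correct and is exactly the paper's approach: the paper gives no separate proof of Theorem~\ref{c3}, stating only that it follows at once from Proposition~\ref{t5}, which is precisely the deduction you spell out (identifying $\sup\{\chi(H,X):H\in CL(X)\}\leq\omega$ with the $D_1$-property, $\sup\{e\pi w(H):H\in CL(X)\}\leq\omega$ with countable external $\pi$-bases, and using that a product of cardinals is countable iff both factors are).
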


It is well known that each first-countable space is weakly first-countable. The next theorem shows that weak first-countability is equivalent to first-countability in $(CL(X), \mathbb{V})$.

\begin{theorem}\label{tttt}
Let $X$ be a (regular) space. Then $(CL(X), \mathbb{V})$ is first-countable if and only if $(CL(X),
\mathbb{V})$ is weakly first-countable.
\end{theorem}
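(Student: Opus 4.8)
The forward implication is immediate and is the ``well known'' half quoted just before the statement: any genuine neighbourhood base is in particular a weak base, so a first-countable space is weakly first-countable. Hence the whole content lies in the converse, and my plan is to run it through the external-$\pi$-base formula of Proposition~\ref{t5}. Fixing $H\in CL(X)$, it suffices to prove $\chi(H,X)\le\omega$ and $e\pi w(H)\le\omega$ for every such $H$, since then $\chi((CL(X),\mathbb{V}))=\sup_H\chi(H,X)\cdot\sup_H e\pi w(H)\le\omega$. Throughout I use that a weakly first-countable space is sequential, hence of countable tightness; thus $(CL(X),\mathbb{V})$ has countable tightness, so $X$ has countable set-tightness by Proposition~\ref{p4}, and $X$ is first-countable by Corollary~\ref{c5}. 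Let $\mathscr{P}_H=\{G_n:n\in\mathbb{N}\}$ be a countable weak base at $H$, arranged to be decreasing via axiom (ii).

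The heart of the proof is $\chi(H,X)\le\omega$ (i.e. that $X$ is a $D_1$-space), obtained by upgrading the weak neighbourhoods $G_n$ to genuine neighbourhoods of $H$ in $X$. For each $n$ put $W_n=\bigcup\{K:K\in G_n\}\subseteq X$; since $H\in G_n$ we have $H\subseteq W_n$. If $V$ is open in $X$ with $H\subseteq V$, then $V^{+}=\langle V\rangle$ is an open neighbourhood of $H$ in $(CL(X),\mathbb{V})$, so by axiom (iv) some $G_n\subseteq V^{+}$, whence every $K\in G_n$ lies in $V$ and $W_n\subseteq V$; thus $\{W_n\}$ is a network at $H$ in $X$. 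The crucial claim is $H\subseteq\operatorname{int}W_n$ for every $n$. If some $x_0\in H$ were not interior to $W_n$, then (as $X$ is first-countable) I would pick $x^{(i)}\to x_0$ with $x^{(i)}\notin W_n$, and set $K_i=H\cup\{x^{(i)}\}$. These are closed and converge to $H$ in the Vietoris topology: the lower condition holds because $K_i\supseteq H$, and the upper condition because $x^{(i)}\to x_0\in H$. Since $G_n$ is a weak-base element it is a sequential neighbourhood of $H$, so $K_i\in G_n$ eventually, forcing $x^{(i)}\in W_n$, a contradiction. Hence $\{\operatorname{int}W_n\}$ is a countable neighbourhood base of $H$ in $X$ and $\chi(H,X)\le\omega$.

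For the remaining bound on $e\pi w(H)$ I would exploit countable set-tightness rather than the weak base. First, each closed $H$ is separable: the family $\mathcal{F}=[H]^{<\omega}$ has $H$ as a cluster set (given finitely many open $V_i$ meeting $H$ and an open $U\supseteq H$, pick one point of each $V_i\cap H$), so by $st(X)\le\omega$ there is a countable $\mathcal{F}'\subseteq\mathcal{F}$ with $H$ a cluster set of $\mathcal{F}'$, and then $D=\bigcup\mathcal{F}'$ is a countable dense subset of $H$ (apply the cluster-set property with a single $V_1=U$ and ambient neighbourhood $X$). With $X$ first-countable, fix for each $x\in D$ a countable neighbourhood base $\{B^{x}_m\}_m$ in $X$; then $\{B^{x}_m:x\in D,\ m\in\mathbb{N}\}$ is a countable external $\pi$-base of $H$, since for open $U$ with $H\cap U\ne\emptyset$ density of $D$ gives $x\in D\cap U$ and some $B^{x}_m\subseteq U$ yields $\emptyset\ne H\cap B^{x}_m\subseteq U$. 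Thus $e\pi w(H)\le\omega$, and Proposition~\ref{t5} completes the argument.

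The step I expect to be most delicate is the interior claim $H\subseteq\operatorname{int}W_n$, which is exactly where weak first-countability is used in an essential way beyond mere countable tightness: one must manufacture the explicit convergent sequence $K_i=H\cup\{x^{(i)}\}$ inside the \emph{full} hyperspace and invoke that every weak-base element is a sequential neighbourhood. This is also the point that genuinely needs the ambient space $X$ rather than a closed piece of $CL(X)$; indeed the infinite box product $\prod_n\mathbb{S}_n$, a closed subspace of $(CL(D(\omega)),\mathbb{V})$, is weakly first-countable but not first-countable, so the argument cannot be localized to such subspaces. I would therefore be careful to justify (or cite as standard) that weak-base elements are sequential neighbourhoods, and to verify both Vietoris convergence conditions for $K_i\to H$.
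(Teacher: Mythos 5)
Your proof is correct and takes essentially the same route as the paper's: reduce to $\chi(H,X)\le\omega$ and $e\pi w(H)\le\omega$ via Proposition~\ref{t5}, get first countability of $X$ from countable tightness (Corollary~\ref{c5}), obtain a countable dense subset of each closed set by applying tightness to its family of finite subsets, and upgrade the unions $W_n=\bigcup G_n$ of weak-base elements to genuine neighborhoods of $H$ using the sequences $H\cup\{x^{(i)}\}\to H$ together with first countability of $X$. The only cosmetic differences are that you route the separability step through set-tightness (Proposition~\ref{p4}) where the paper argues directly in the hyperspace, and that you explicitly flag the standard fact that weak-base elements are sequential neighborhoods, which the paper uses implicitly.
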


\begin{proof}
Clearly, it suffices to prove the sufficiency. Assume that $(CL(X),
\mathbb{V})$ is weakly first-countable, so it has countable tightness. Then $X$
is first-countable by Corollary ~\ref{c5}. Moreover, we claim that $d(A)\leq\omega$ for each $A\in CL(X)$. Indeed, take any $A\in CL(X)$, and let $\mathcal{F}=\{C: C\subset A, |C|<\omega\}$. Then $A$ belongs to the closure of $\mathcal{F}$ in $(CL(X), \mathbb{V})$. In fact, for any open neighborhood $\langle U_{1}, \ldots, U_{n}\rangle$ of $A$ in $(CL(X), \mathbb{V})$, we have $U_{i}\cap A\neq$ for any $i\leq n$; hence pick an arbitrary $x_{i}\in U_{i}\cap A\neq$ for any $i\leq n$. Then $\{x_{1}, \ldots, x_{n}\}\in \langle U_{1}, \ldots, U_{n}\rangle\cap \mathcal{F}\neq\emptyset$. Therefore, $A$ belongs to the closure of $\mathcal{F}$ in $(CL(X), \mathbb{V})$. Since $(CL(X), \mathbb{V})$ has a countable tightness, there exists a countable subset $\mathcal{F}_{1}=\{C_{n}: n\in\mathbb{N}\}$ of $\mathcal{F}$ such that $A$ belongs to the closure of $\mathcal{F}_{1}$ in $(CL(X), \mathbb{V})$. Put $D=\bigcup\mathcal{F}_{1}$. Then the closure of $D$ in $X$ is just $A$. In fact, let $U$ be an arbitrary open subset of $X$ such that $U\cap A\neq\emptyset$; then $U^{-}$ is an open neighborhood of $A$ in $(CL(X), \mathbb{V})$, hence $U^{-}$ contains some element $F\in\mathcal{F}_{1}$, then $F\cap U\neq\emptyset$. Therefore, $D\cap U\neq\emptyset$. Thus $d(A)\leq\omega$.

Take any $A\in CL(X)$; then, by Proposition~\ref{t5}, it suffices to prove that $e\pi w(A)=\omega$ and $\chi(A, X)=\omega$.

\smallskip
(1) $e\pi w(A)=\omega$.

\smallskip
Let $D$ be a countable dense subset of $A$; for each $d\in D$, let
$\mathcal{B}_d$ be a countable base at $d$ in $X$. Then it is easy to check that
$\bigcup\{\mathcal{B}_d: d\in D\}$ is an external $\pi$-base of $A$,
hence $e\pi w(A)=\omega$.

\smallskip
(2) $\chi(A, X)=\omega$.

\smallskip
Let $\{\mathcal{U}_i: i\in \mathbb{N}\}$ be a countable weak base at
$A$, and let $U_i=\bigcup\mathcal{U}_i$ for each $i\in \mathbb{N}$.
obviously, $A\subset U_i$ for each $i\in \mathbb{N}$. We prove that $\{\mbox{int}(U_i):
i\in \mathbb{N}\}$ is a countable base at $A$ in $X$.

First, we prove that each $U_i$ is a sequential neighborhood of $A$ in
$X$. Indeed, let $x_n\to x\in A$ as $n\rightarrow\infty$, and let $A_n=A\cup\{x_n\}$ for each $n\in\mathbb{N}$; then $A_n\to A$ as $n\rightarrow\infty$ in
$(CL(X), \mathbb{V})$. Since $\mathcal{U}_i$ is a weak neighborhood
of $A$ in $(CL(X), \mathbb{V})$, there exists $k\in \mathbb{N}$
such that $A_n\in \mathcal{U}_i$ whenever $n>k$, it implies
$A_n\subset U_i$ for $n>k$, hence $\{x_n: n>N\}\subset U_i$.

Second, for any $A\subset U$ with $U$ open in $X$, the set $\langle
U\rangle$ is an open neighborhood of $A$ in $(CL(X), \mathbb{V})$, hence
there exists $\mathcal{U}_i$ such that $A\in \mathcal{U}_i\subset
\langle U\rangle$, then $A\subset U_i\subset U$.

Finally, we prove $A\subset \mbox{int}(U_i)$ for each $i\in \mathbb{N}$. Suppose not,
pick any $x\in A\setminus \mbox{int}(U_i)$. Since $X$ is first-countable, there is a
sequence $\{x_n: n\in\mathbb{N}\}\subset X\setminus U_i$ such that
$x_n\to x$ as $n\rightarrow\infty$, which is a contradiction because $U_i$ is a sequential
neighborhood of $A$.

Therefore, $\{\mbox{int}(U_i): i\in \mathbb{N}\}$ is a countable base at
$A$, i.e. $\chi(A, X)=\omega$.
\end{proof}

Finally we prove the third main theorem in this section (see Theorem~\ref{t-g}), which also gives a partial answer to Problem~\ref{p6}.
We denote the set of non-isolated points of a space $X$ as $S(X)$.

\begin{theorem}\label{t-g}
Let $X$ be a space. Then $(CL(X), \mathbb{V})$ is a $\gamma$-space if and only if $X$ is a
separable metrizable space and $S(X)$ is compact.
\end{theorem}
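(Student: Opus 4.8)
The plan is to prove both implications, extracting strong consequences from first-countability of the hyperspace and invoking the genuinely $\gamma$-theoretic content only where it is needed. For the \emph{necessity}, suppose $(CL(X),\mathbb{V})$ is a $\gamma$-space. Since the defining $g$-function furnishes a countable base at each point, $(CL(X),\mathbb{V})$ is first-countable, so by Theorem~\ref{c3} the space $X$ is a $D_1$-space and every closed subset of $X$ has a countable external $\pi$-base; applying this to the closed set $X$ yields a countable $\pi$-base of $X$, whence $X$ is separable, and by Corollary~\ref{c5} it is first-countable. I would then derive countable compactness of $S(X)$ directly from $D_1$: if $S(X)$ contained an infinite closed discrete set $\{y_n\}$ of non-isolated points, I would separate the $y_n$ by pairwise disjoint open sets $V_n$ (inductively, using regularity and closedness of the tails), fix strictly decreasing local bases $\{U_n^k\}_k$ at each $y_n$ inside $V_n$, and run a diagonal argument showing that no countable family of neighbourhoods of $\{y_n\}$ is a base, contradicting $\chi(\{y_n\},X)\le\omega$. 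Thus $S(X)$ is countably compact.

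Metrizability of $X$ is then obtained through a $G_\delta$-diagonal. Because $\gamma$-spaces are hereditary, the closed subspace $\{\{s\}:s\in S(X)\}\cong S(X)$ of $(CL(X),\mathbb{V})$ is itself a $\gamma$-space; I would show that a countably compact $\gamma$-space $Y$ has a $G_\delta$-diagonal by taking the covers $\mathcal{G}_n=\{g(n,y):y\in Y\}$ and, given $z\neq x$ with $z\in\bigcap_n\mathrm{st}(x,\mathcal{G}_n)$, extracting a convergent subsequence of the chosen centres (sequential compactness, valid since $\gamma$-spaces are first-countable) and invoking the $\gamma$-condition at its limit to force $x=z$. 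By \cite[Theorem 2.14]{G1984} a countably compact space with a $G_\delta$-diagonal is compact metrizable, so $S(X)$ is compact (and $X\setminus S(X)$ is countable, being a cellular family in a separable space). Assembling a $G_\delta$-diagonal for all of $X$ from the metric on the compact core $S(X)$ together with the countably many isolated points, I would conclude via \cite[Theorem 7(4)]{DL1995} that the $D_1$-space $X$ is metrizable, giving the required conclusion that $X$ is separable metrizable with $S(X)$ compact.

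For the \emph{sufficiency}, assume $X$ is separable metrizable with $S=S(X)$ compact; fix a metric $d$, a countable base $\mathcal{B}=\{B_j\}$ of $X$, and enumerate the (necessarily countable) set of isolated points $I=X\setminus S$. For $H\in CL(X)$ put $V_m(H)=\{x:d(x,H\cap S)<1/m\}\cup(H\cap I)$; compactness of $H\cap S$ shows $\{V_m(H)\}_m$ is a countable base of open neighbourhoods of $H$ in $X$, while $\mathcal{B}$ is a countable external $\pi$-base of every $H$, so by Proposition~\ref{t5} the hyperspace is first-countable with explicit base elements $\langle B_{1}^{H}\cap V_m(H),\ldots,B_{m}^{H}\cap V_m(H),V_m(H)\rangle$, where $\{B_l^{H}\}$ enumerates $\{B\in\mathcal{B}:B\cap H\neq\emptyset\}$. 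I would take this family as the candidate $g$-function, the index $m$ simultaneously sharpening the metric neighbourhood $V_m$ of the compact part and listing more base elements that pin down witnesses, exactly in the spirit of the discrete construction in Theorem~\ref{theo}.

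The hard part will be verifying the second ($\gamma$) condition for this $g$-function: given $n$ and $H$, producing a single $m$ so that $G\in g(m,H)$ forces $g(m,G)\subset g(n,H)$. This requires controlling, uniformly over all $G\in g(m,H)$, both the metric neighbourhoods ($V_m(G)\subset V_n(H)$, which should follow from the compactness of $S$, since membership in $g(m,H)$ forces $G\cap S$ to lie close to $H\cap S$) and the reindexing of base elements ($\mathcal{B}_G$ versus $\mathcal{B}_H$), so that the finitely many ``pinning'' coordinates of $g(m,G)$ refine those of $g(n,H)$. I expect this bookkeeping --- reconciling the two independent mechanisms (compact-metric shrinking and finite discrete approximation) within one monotone scale --- to be the principal technical obstacle, and I would organize it by treating $H\cap S$ and $H\cap I$ separately and then amalgamating, since the decomposition $X=S\sqcup I$ makes the two contributions essentially independent.
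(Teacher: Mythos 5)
Your necessity half is sound, and in fact more self-contained than the paper's: where the paper simply quotes \cite[Theorems 1 and 7(8)]{DL1995} to pass from ``$X$ is a $D_1$-space and a $\gamma$-space'' to ``$X$ is metrizable and $S(X)$ is countably compact'', you reprove countable compactness of $S(X)$ by the disjoint-open-expansion/diagonal argument, prove (correctly) that a countably compact $\gamma$-space has a $G_\delta$-diagonal, and then finish with Chaber's theorem \cite[Theorem 2.14]{G1984} and \cite[Theorem 7(4)]{DL1995}. Those steps all check out.

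The genuine gap is in the sufficiency half. There you only define the candidate $g$-function $g(m,H)=\langle B_1^H\cap V_m(H),\dots,B_m^H\cap V_m(H),V_m(H)\rangle$ and then announce that verifying the $\gamma$-condition --- finding, for given $n$ and $H$, one $m$ with $G\in g(m,H)\Rightarrow g(m,G)\subset g(n,H)$ --- ``will be the principal technical obstacle''. That verification \emph{is} the entire content of this direction (first-countability of $(CL(X),\mathbb{V})$ already follows from Theorem~\ref{c3} and says nothing about $\gamma$), and it is exactly what the paper's proof supplies. The missing idea is a concrete rule for choosing $m$, with two ingredients. First, fix one master enumeration $\mathcal{B}=\{C_1,C_2,\dots\}$ and always list $\{B\in\mathcal{B}: B\cap H\neq\emptyset\}$ in increasing order of master index; then the position of $C_q$ in the list attached to any closed set it meets is at most $q$. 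Hence if $q_n$ is the master index of $B_n^H$ and you take $m\geq q_n$, every $B_j^H$ with $j\leq n$ automatically reappears among $B_1^G,\dots,B_m^G$ for any $G\in g(m,H)$ (each such $B_j^H$ meets $G$, because $G$ meets the coordinate $B_j^H\cap V_m(H)$). This is precisely the paper's choice $s=q_n$ in its Case 2. Second, take $m\geq 2n$ as well: then $G\subset V_m(H)$ gives $G\cap S\subset B_{1/m}(H\cap S)$, and the triangle inequality (not compactness of $S$, as you suggest) yields $B_{1/m}(G\cap S)\subset B_{2/m}(H\cap S)\subset B_{1/n}(H\cap S)$, hence $V_m(G)\subset V_n(H)$. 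With both ingredients, Michael's criterion \cite[Lemma 2.3.1]{M1951} (union inclusion plus ``every coordinate of $g(n,H)$ contains a coordinate of $g(m,G)$'') gives $g(m,G)\subset g(n,H)$ for $m=\max\{2n,q_n\}$. Without this rule, or some substitute for it, your proposal stops exactly where the proof begins, so as it stands the sufficiency direction is not proved.
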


\begin{proof}
Necessity. Clearly, $(CL(X), \mathbb{V})$ is first-countable, then it follows from Corollary
~\ref{c3} that $X$ is a $D_1$-space; moreover, $X$ is a $\gamma$-space since
the property of $\gamma$-space is hereditary. Therefore, $X$ is metrizable
by \cite[Theorem 7(8)]{DL1995}, and $S(X)$ is also countably compact by
\cite[Theorem 1]{DL1995}, thus $S(X)$ is compact. Since $X$  has a
countable external $\pi$-base by Theorem ~\ref{c3}, it follows that $X$ is
separable.

Sufficiency. Assume that $X$ is a
separable metrizable space and $S(X)$ is compact, and Assume that $d$ is the metric on $X$. Let $X=I(X)\cup S(X)$, where $I(X)$ is the set of all
isolated points of $X$. Clearly, $I(X)$ is countable, and we write $I(X)=\{r_1, r_2, ......
\}$. Let
$\mathcal{C}'$ be a countable base of $X$, and let
$\mathcal{C}=\{C\in \mathcal{C}': C\cap S(X)\neq \emptyset\}$; then
$\mathcal{C}$ is an external base\footnote{A family $\mathcal{B}$ of
open subsets of a space $X$ is called an external base \cite[Page
467]{AT2008} of a set $Y\subset X$ if for every point $y\in Y$ and
every neighborhood $U$ of $y$ in $X$ there exists $V\in \mathcal{B}$
such that $y\in V\subset U$.} of $S(X)$, and we write
$\mathcal{C}=\{C_1, C_2, ..., C_n,...\}$.
For each $A\in CL(X)$ and $n\in
\mathbb{N}$, we define a function $G: \mathbb{N}\times CL(X)\to \tau$ as
follows, where $\tau$ is the topology of $(CL(X), \mathbb{V})$.

\smallskip
{\bf Case 1:} $A\subset I(X)$.

\smallskip
If $A=\{a_1, ..., a_k\}$ is finite, then
put $G(n, A)=\langle \{a_1\}, ..., \{a_k\}\rangle=\{A\}$ for each $n\in\mathbb{N}$. If $A$ is
infinite, let $A=\{a_1, a_2, ..., a_n, ...\}$ such $a_i=r_{p_i}$ and $p_{i}<p_{i+1}$ for each $i\in
\mathbb{N}$; then put $G(n, A)=\langle \{a_1\}, ..., \{a_n\}, A\rangle$ for each $n\in\mathbb{N}$. We verify that the family $\{G(n, A): n\in\mathbb{N}\}$ satisfies the conditions (i) and (ii) of the definition of $\gamma$-space.

\smallskip
(i) Let $\mathbb{U}=\langle U_1, .., U_m\rangle$
be an arbitrary open neighborhood of $A$. Pick
$a_{j_i}\in U_i$ for $i\leq m$, and let $k=\max\{j_i: i\leq m\}$; then
$A\in\langle \{a_1\}, ..., \{a_k\}, A\rangle=G(k, A)\subset \mathbb{U}$.
Hence $\{G(n, A): n\in \mathbb{N}\}$ is a local base at $A$.

\smallskip
(ii) For any $n\in\mathbb{N}$, let $B\in G(n+1, A)$; then $\{a_1, ..., a_n, a_{n+1}\}\subset
B\subset A$. If $B$ is finite, it is obvious that $G(n+1, B)=\{B\}\subset
G(n, A)$; if $B=\{b_1, b_2, ... \}$ is infinite, then $b_i=a_i$ for any
$i\leq n+1$, hence $$G(n+1, B)=\langle \{b_1\}, ... , \{b_{n+1}\}, B\rangle \subset
\langle \{a_1\}, ..., \{a_{n+1}\}, A\rangle=G(n+1, A)\subset G(n, A).$$

\smallskip
{\bf Case 2:} $A\setminus I(X)\neq \emptyset$.

\smallskip
Then $A=A_1\cup A_2$, where
$A_1=A\cap I(X), A_2=A\cap S(X)$. Clearly, $A_2$ is compact. For each $n\in\mathbb{N}$,
let $B_{1/n}(A_2)=\{x\in X, d(A_2, x)<1/n\}$, and put $\mathcal{D}=\{D\in
\mathcal{C}, D\cap A_2\neq \emptyset\}$. Then we write
$\mathcal{D}=\{D_1, D_2, ..., D_k, ...\}$ such that $D_i=C_{q_i}$ for
$i\in \mathbb{N}$ and $\{q_i: i\in\mathbb{N}\}$ is increasing. For each $n\in\mathbb{N}$, let
$V_n=B_{1/n}(A_2)\cup A_1$. If $A_1$ is finite, then we can write $A_1=\{a_1,
..., a_m\}$, then put $$G(n, A)=\langle D_1\cap V_n, ..., D_n\cap V_n, \{a_1\},
..., \{a_m\}, V_n\rangle;$$ if $A_1$ is infinite, then we can write $A_1=\{a_1,
..., a_n,...\}$, then for each $n\in\mathbb{N}$ put $$G(n, A)=\langle D_1\cap V_n, ..., D_n\cap
V_n, \{a_1\}, ..., \{a_n\}, V_n\rangle.$$

Now it suffices to prove $G(n, A)$ satisfies (i) and (ii) in the definition of
$\gamma$-space as $A_1$ is infinite; for the case that $A_1$ is
finite, we may use a similar way to prove it.

\smallskip
(i$^{\prime}$) Let $\mathbb{U}=\langle U_1, .., U_m\rangle$
be an arbitrary open neighborhood of $A$. Since $A_2\subset
\bigcup\{U_i: i\leq m\}$, there is $n'\in \mathbb{N}$ such that
$A_2\subset B_{1/n'}(A_2)\subset \bigcup\{U_i: i\leq m\}$. For each $i\leq m$, if $U_i\cap A_2\neq \emptyset$, then we can find
$D_{j_i}\in \mathcal{D}$ such that $D_{j_i}\cap A_2\neq \emptyset$ and
$D_{j_i}\subset U_i$. Let $n''=\max\{j_{i}: i\leq m\}$, and let
$m'=\max\{n', n''\}$. Then $$A\in G(m', A)=\langle D_1\cap V_{m'}, ...,
D_{m'}\cap V_{m'}, \{a_1\}, ..., \{a_{m'}\}, V_{m'}\rangle\subset \langle
U_1, ..., U_m\rangle$$ by \cite[Lemma 2.3.1]{M1951}. Hence $\{G(n, A):
n\in\mathbb{N}\}$ is a countable local base at $A$.

\smallskip
(ii$^{\prime}$) For any $n\in \mathbb{N}$, let $s=q_n$,
then $s>n$. We claim that, for any $B\in G(s, A)$, we have $G(s, B)\subset G(n, A)$.
Indeed, it is obvious that $B\cap (D_i\cap V_s)\neq \emptyset$ for each $i\leq s$ and $\{a_1, ...,
a_s\}\subset B\subset V_s \subset V_n$. Let $\mathcal{E}=\{C\in
\mathcal{C}, C\cap B\neq \emptyset\}$; then we write $\mathcal{E}=\{E_i:
i\in \mathbb{N}\}$ such that $E_i=C_{l_i}$ for $i\in \mathbb{N}$ and
$\{l_i\}$ is increasing.  Note that $B\cap D_i\neq \emptyset$ and $B\cap
E_j\neq \emptyset$ for any $i\leq n, j\leq s$, we can see that $\{D_1,
..., D_n\}\subset \{E_1, ..., E_s\}$. Therefore, it follows from \cite[Lemma 2.3.1]{M1951} that
\begin{eqnarray}
G(s, B)&=&\langle E_1\cap V_s,
..., E_s\cap V_s, \{a_1\}, ..., \{a_s\}, V_s\rangle\nonumber\\
&\subset&\langle D_1\cap
V_n, ..., D_2\cap V_n, \{a_1\}, ..., \{a_n\}, V_n\rangle\nonumber\\
&=&G(n, A).\nonumber
\end{eqnarray}

Therefore, $(CL(X), \mathbb{V})$ is a
$\gamma$-space.
\end{proof}

\begin{corollary}\label{ccc}
The following statements are equivalent for a space $X$.

\begin{enumerate}
\item $(CL(X), \mathbb{V})$ is a $\gamma$-space;

\item $(CL(X), \mathbb{V})$ is a wealy first-countable and submetrizable space;

\item $(CL(X), \mathbb{V})$ is weakly first-countable and has a
$G_\delta$-diagonal;

\item $X$ is a separable metrizable space and $S(X)$ is compact.
\end{enumerate}
\end{corollary}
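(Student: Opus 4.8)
The plan is to take the equivalence $(1)\Leftrightarrow(4)$ for free, since it is exactly the content of Theorem~\ref{t-g}, and then to close the cycle $(4)\Rightarrow(2)\Rightarrow(3)\Rightarrow(4)$. The step $(2)\Rightarrow(3)$ is immediate: submetrizability supplies a continuous bijection onto a metrizable space $M$, and since $\Delta_{M}$ is $G_{\delta}$ in $M\times M$, pulling back shows that $(CL(X),\mathbb{V})$ has a $G_{\delta}$-diagonal, while the weak-first-countability hypothesis is carried over verbatim. So the genuine work lies in $(4)\Rightarrow(2)$ and $(3)\Rightarrow(4)$.

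For $(4)\Rightarrow(2)$, first-countability (hence weak first-countability) of $(CL(X),\mathbb{V})$ is free from Theorem~\ref{t-g}, as a $\gamma$-space is first-countable by definition and every base is a weak base. The substantial point is submetrizability, which is where I would do the main work. Fixing a metric $d$ inducing the topology of $X$ and a countable dense set $\{x_{k}:k\in\mathbb{N}\}$, I would define $\Phi\colon(CL(X),\mathbb{V})\to[0,1]^{\omega}$ by $\Phi(A)=\big(d(x_{k},A)/(1+d(x_{k},A))\big)_{k\in\mathbb{N}}$. Each coordinate $A\mapsto d(x_{k},A)$ is continuous on $(CL(X),\mathbb{V})$ because $\{A:d(x_{k},A)<r\}=(B(x_{k},r))^{-}$ and $\{A:d(x_{k},A)>r\}=(X\setminus\overline{B}(x_{k},r))^{+}$ are both Vietoris-open; and $\Phi$ is injective because the $1$-Lipschitz functional $d(\cdot,A)$ is determined by its values on the dense set $\{x_{k}\}$, while the identity $A=\{x:d(x,A)=0\}$ recovers $A$ from that functional. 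Thus $\Phi$ is a continuous injection into a metrizable space, so the topology it induces is a coarser metrizable topology, and $(CL(X),\mathbb{V})$ is submetrizable.

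For $(3)\Rightarrow(4)$ I would assemble the earlier results. Weak first-countability upgrades to first-countability by Theorem~\ref{tttt}; then Theorem~\ref{c3} shows that $X$ is a $D_{1}$-space and that every closed subset of $X$ has a countable external $\pi$-base. Taking the closed set to be $X$ itself yields a countable $\pi$-base, so $X$ is separable. Since $X$ sits as a subspace (the singletons) of $(CL(X),\mathbb{V})$ and the $G_{\delta}$-diagonal property is hereditary, $X$ has a $G_{\delta}$-diagonal; combined with $D_{1}$, this forces $X$ to be metrizable by \cite[Theorem 7(4)]{DL1995}. It remains to show $S(X)$ is compact: being metrizable, $X$ is normal, and first-countability of $(CL(X),\mathbb{V})$ gives it countable tightness, whence $X$ has countable set-tightness by Proposition~\ref{p4}; Proposition~\ref{p5}(3) then makes $S(X)$ countably compact, and a countably compact subset of a metrizable space is compact. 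This establishes (4).

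The main obstacle I anticipate is the submetrizability step in $(4)\Rightarrow(2)$: the rest of the argument is a bookkeeping chain of already-proved facts, but exhibiting a coarser metrizable topology on $(CL(X),\mathbb{V})$ requires the concrete distance-functional embedding $\Phi$, together with the verification that its coordinates are Vietoris-continuous (which hinges on rewriting their sublevel and superlevel sets in the $U^{-}$ and $U^{+}$ form) and that $\Phi$ separates points. I expect no trouble from regularity, since the needed separation is supplied entirely by the metrizable target.
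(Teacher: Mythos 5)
Your proof is correct, and in both substantive implications it takes a genuinely different route from the paper's. For $(4)\Rightarrow(2)$ the paper simply cites \cite[Proposition 8(2)]{HPZ2003} for submetrizability and \cite[Theorem 2.3]{HP2002} for first-countability, while you derive weak first-countability internally from Theorem~\ref{t-g} (a $\gamma$-space is first-countable by clause (i) of the definition) and prove submetrizability by hand via the distance-functional embedding $\Phi$ into $[0,1]^{\omega}$, i.e.\ by exhibiting the Wijsman-type topology as a coarser metrizable topology; this is self-contained and even shows that compactness of $S(X)$ plays no role in submetrizability. One small repair is needed there: the identity $\{A : d(x_{k},A)>r\}=\bigl(X\setminus\overline{B}(x_{k},r)\bigr)^{+}$ can fail, since $A\cap \overline{B}(x_{k},r)=\emptyset$ only forces $d(x_{k},A)\geq r$ when closed balls need not be compact; but the superlevel set is still Vietoris-open because $\{A: d(x_{k},A)>r\}=\bigcup_{s>r}\bigl(X\setminus\overline{B}(x_{k},s)\bigr)^{+}$, so the continuity of each coordinate, hence your argument, survives intact. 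For $(3)\Rightarrow(4)$ both you and the paper start from Theorems~\ref{c3} and~\ref{tttt} to get that $X$ is a separable $D_{1}$-space with a (hereditarily inherited) $G_{\delta}$-diagonal, but then diverge: the paper first gets $S(X)$ countably compact from \cite[Theorem 1]{DL1995}, concludes $S(X)$ is compact metrizable, and only then deduces metrizability of $X$ from \cite[Theorem 7(8)]{DL1995}; you instead get metrizability of $X$ immediately from $D_{1}$ plus $G_{\delta}$-diagonal (\cite[Theorem 7(4)]{DL1995}, the same fact used in Proposition~\ref{p1}(7)), and then recover compactness of $S(X)$ from the paper's own tightness machinery (Proposition~\ref{p4} together with Proposition~\ref{p5}(3), applicable since $X$ is by then metrizable, hence normal, and a closed countably compact subset of a metrizable space is compact). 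Both chains are valid; yours leans more on the internal results of Sections 4 and 5, the paper's more on external citations.
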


\begin{proof}
(1) $\Longleftrightarrow$ (4) by Theorem ~\ref{t-g}. (2)
$\Rightarrow$ (3) is trivial.

(3) $\Rightarrow$ (4). By Theorems
~\ref{c3} and~\ref{tttt}, $X$ is a separable $D_1$-space with a $G_\delta$-diagonal, then $S(X)$ is countably compact by
\cite[Theorem 1]{DL1995}, hence $S(X)$ is compact metrizable. Thus $X$ is
metrizable by \cite[Theorem 7(8)]{DL1995}.

(4) $\Rightarrow$ (2). By \cite[Proposition 8(2)]{HPZ2003}, $(CL(X),
\mathbb{V})$ is submetrizable. Moreover, $X$ is also first-countable by
\cite[Theorem 2.3]{HP2002}.
\end{proof}

By Theorem~\ref{tttt} and Corollary~\ref{ccc}, we have the following corollary.

\begin{corollary}
Let $X$ be a (regular) space. Then $(CL(X), \mathbb{V})$ is a $\gamma$-space if and only if $(CL(X),
\mathbb{V})$ is weakly first-countable and has a $G_\delta$-diagonal.
\end{corollary}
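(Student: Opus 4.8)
The plan is to observe that the asserted equivalence is precisely the equivalence of conditions (1) and (3) recorded in Corollary~\ref{ccc}, so the shortest complete proof is to quote that corollary verbatim. To exhibit the role of Theorem~\ref{tttt} explicitly (as the statement advertises), I would instead run the two implications through condition (4), which is the concrete description ``$X$ is separable metrizable and $S(X)$ is compact''.

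For necessity, suppose $(CL(X),\mathbb{V})$ is a $\gamma$-space. Directly from the definition of a $\gamma$-space the sets $\{g(n,H):n\in\omega\}$ form a countable base at each $H$, so $(CL(X),\mathbb{V})$ is first-countable, and a fortiori weakly first-countable. To produce the $G_\delta$-diagonal I would pass to condition (4): by Theorem~\ref{t-g}, $X$ is separable metrizable with $S(X)$ compact, whereupon \cite[Proposition 8(2)]{HPZ2003} yields that $(CL(X),\mathbb{V})$ is submetrizable, hence has a $G_\delta$-diagonal. This gives the forward implication.

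For sufficiency, suppose $(CL(X),\mathbb{V})$ is weakly first-countable and has a $G_\delta$-diagonal. The one substantive step—and exactly where Theorem~\ref{tttt} enters—is to upgrade weak first-countability of the hyperspace to genuine first-countability; once this is done, Theorem~\ref{c3} tells us that $X$ is a $D_1$-space in which every closed set has a countable external $\pi$-base, so $X$ is separable. The $G_\delta$-diagonal is hereditary, so the closed subspace $\{\{x\}:x\in X\}\cong X$ inherits one; combining separability, the $D_1$-property and the $G_\delta$-diagonal with \cite[Theorem 1]{DL1995} gives that $S(X)$ is countably compact, hence compact metrizable, and then \cite[Theorem 7(8)]{DL1995} makes $X$ metrizable. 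This is condition (4), and Theorem~\ref{t-g} closes the loop by returning that $(CL(X),\mathbb{V})$ is a $\gamma$-space.

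I expect no genuine obstacle here: every ingredient is a direct quotation of Theorem~\ref{t-g}, Theorem~\ref{c3}, Corollary~\ref{ccc}, and the cited results of \cite{HPZ2003} and \cite{DL1995}. The only point carrying content is the implication ``weakly first-countable $\Rightarrow$ first-countable'' for hyperspaces, supplied by Theorem~\ref{tttt}; without it one could not invoke the first-countability characterization of Theorem~\ref{c3}. All remaining passages are routine appeals to heredity of $G_\delta$-diagonals and to the standing chain of equivalences.
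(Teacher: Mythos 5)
Your proposal is correct and takes essentially the same route as the paper: the paper's entire proof is the remark ``By Theorem~\ref{tttt} and Corollary~\ref{ccc}'', since the statement is exactly the equivalence of conditions (1) and (3) of Corollary~\ref{ccc}, which is your first observation. Your expanded argument through condition (4) just reproduces the paper's own proof of Corollary~\ref{ccc} --- necessity via Theorem~\ref{t-g} and \cite[Proposition 8(2)]{HPZ2003}, sufficiency via Theorems~\ref{tttt} and~\ref{c3} together with \cite[Theorem 1]{DL1995} and \cite[Theorem 7(8)]{DL1995} --- so there is no substantive difference.
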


The following theorem shows that the classes of $D_0$-spaces and $\gamma$-spaces are equivalent in $(CL(X), \mathbb{V})$ under the assumption of $MA + \neg CH$.

\begin{theorem}
($MA + \neg CH$) Let $X$ be a space. Then $(CL(X), \mathbb{V})$ is a $D_0$-space if and only
if $(CL(X), \mathbb{V})$ is a $\gamma$-space.
\end{theorem}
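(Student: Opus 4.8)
The plan is to handle the two implications asymmetrically, because one of them is free. That every $\gamma$-space is a $D_0$-space is recorded in the footnote via \cite[Theorem 10.6(iii)]{G1984}, and this holds in ZFC with no appeal to $MA+\neg CH$. Hence the entire content of the theorem is the reverse implication: assuming $MA+\neg CH$, if $(CL(X),\mathbb{V})$ is a $D_0$-space then it is a $\gamma$-space. By Theorem~\ref{t-g} this reduces to showing that $X$ is separable metrizable and that $S(X)$ is compact, so the whole argument is organized around producing those two conclusions from the $D_0$ hypothesis.

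First I would observe that a $D_0$-space is automatically first-countable: for each $H\in CL(X)$ the singleton $\{H\}$ is a compact subset of $(CL(X),\mathbb{V})$, so the $D_0$ assumption forces $H$ to have a countable neighbourhood base. Thus $(CL(X),\mathbb{V})$ is first-countable, and I can now recycle the necessity half of Theorem~\ref{t-g} verbatim. By Theorem~\ref{c3}, $X$ is a $D_1$-space in which every closed subset has a countable external $\pi$-base; applying this to $X$ itself and to $S(X)$ shows that both $X$ and $S(X)$ are separable, and \cite[Theorem 1]{DL1995} gives that $S(X)$ is countably compact. Since a countable neighbourhood base in $X$ restricts to one in any closed subspace, being a $D_1$-space is closed-hereditary, so $S(X)$ is itself a $D_1$-space; being regular, first-countable and $D_1$, every closed subset of $S(X)$ has countable character and is therefore a $G_\delta$. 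In short, at this point $S(X)$ is a separable, first-countable, countably compact, \emph{perfect} (regular) space.

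The crux is to upgrade ``$S(X)$ countably compact'' to ``$S(X)$ compact metrizable,'' and this is exactly where $MA+\neg CH$ enters. I would argue that such a separable, first-countable, perfect, countably compact space must be compact under $MA+\neg CH$ (the Ostaszewski-type perfect countably compact non-compact spaces available under $\diamondsuit$ are ruled out by $MA_{\omega_1}$), and that a perfectly normal compact space is metrizable under $MA+\neg CH$; once $S(X)$ is compact it is normal, hence perfectly normal, so the second consequence applies and yields $S(X)$ compact metrizable. Writing $X=I(X)\cup S(X)$ with $I(X)$ the (countable, since $X$ is separable) open discrete set of isolated points, the metric $G_\delta$-diagonal on $S(X)$ together with the singleton neighbourhoods of the isolated points assembles into a $G_\delta$-diagonal on $X$. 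Then $X$ is a $D_1$-space with a $G_\delta$-diagonal, so $X$ is metrizable by \cite[Theorem 7(4)]{DL1995}. With $X$ separable metrizable and $S(X)$ compact, Theorem~\ref{t-g} delivers that $(CL(X),\mathbb{V})$ is a $\gamma$-space.

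The main obstacle is the set-theoretic step of the third paragraph: identifying and citing precisely the $MA+\neg CH$ consequences that force the perfect, separable, first-countable, countably compact space $S(X)$ to be compact and then metrizable. Everything before and after it is bookkeeping that reuses Theorems~\ref{t-g} and~\ref{c3} and the cited structure theory of $D_1$-spaces; the genuine work, and the only place the hypothesis $MA+\neg CH$ is actually consumed, is in showing that a countably compact space of this restricted type cannot fail to be compact metrizable.
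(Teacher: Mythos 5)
Your reduction to ``$X$ separable metrizable and $S(X)$ compact'' and the bookkeeping in your first two paragraphs (in particular, making explicit that a $D_0$-space is first-countable, then invoking Theorems~\ref{c3} and~\ref{t-g} and \cite[Theorem 1]{DL1995}) do match the paper's strategy. But the crux paragraph rests on a false claim, and it is precisely there that the paper argues differently. The assertion that under $MA+\neg CH$ every perfectly normal compact space is metrizable is refuted outright in ZFC: the double arrow space (the split interval $[0,1]\times\{0,1\}$ with the lexicographic order topology) is compact, hereditarily Lindel\"of and regular, hence perfectly normal, and it is separable, first-countable and non-metrizable. Worse for your argument, it is even a $D_1$-space, because in a compact Hausdorff space every closed $G_\delta$-set automatically has countable character; so \emph{all} of the properties you have accumulated for $S(X)$ --- compact (granting your step (a)), separable, first-countable, perfect, $D_1$ --- are jointly insufficient to force metrizability by any general theorem. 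Your compactness step (a) is also unsupported as stated: the result you are implicitly appealing to (Weiss's theorem that $MA+\neg CH$ implies countably compact \emph{perfectly normal} spaces are compact) requires normality, whereas you have only shown that $S(X)$ is perfect and regular; countably compact regular perfect spaces need not be normal, and you give no argument that this one is.

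The missing idea is that one must consume the $D_0$ hypothesis on the hyperspace a \emph{second} time, rather than discard it after extracting properties of $X$. That is what the paper does. For compactness of $S(X)$: under $MA+\neg CH$ the hereditarily separable $D_1$-space $X$ is strongly paracompact by \cite[Theorem 5(2)]{DL1995}, and the closed countably compact subset $S(X)$ of such a space is compact. For metrizability of $S(X)$: since $S(X)$ is closed in $X$, the hyperspace $(CL(S(X)),\mathbb{V})$ is a closed subspace of the $D_0$-space $(CL(X),\mathbb{V})$, hence itself a $D_0$-space, and it is compact because $S(X)$ is compact; in a compact space every closed set is compact, so $D_0$ coincides with $D_1$ there, and Theorem~\ref{t100} then yields that $S(X)$ is compact metrizable. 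Your final assembly (the $G_\delta$-diagonal on $X=I(X)\cup S(X)$ plus \cite[Theorem 7(4)]{DL1995}) would be an acceptable way to finish once $S(X)$ is known to be compact metrizable, but the two claims that were supposed to get you there are, respectively, unjustified and false.
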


\begin{proof}
By \cite[Theorem 10.6 (iii)]{G1984}, every $\gamma$-space is a
$D_0$-space, so the necessity is done.

Sufficiency. Assume $(CL(X), \mathbb{V})$ is a $D_0$-space, then, by Theorem~\ref{t-g}, it suffices to prove that $X$ is a separable metrizable space and $S(X)$ is compact.

By Theorem~\ref{c3}, $X$ is a $D_1$-space and every closed subset
of $X$ has countable external $\pi$-base, hence $S(X)$ is countably
compact by \cite[Theorem 1]{DL1995} and $X$ is hereditarily
separable. Under $MA +\neg CH$, $X$ is strongly paracompact by
\cite[Theorem 5 (2)]{DL1995}, which implies that $S(X)$ is compact.
Moreover, $S(X)$ is a closed subset of $X$, then $(CL(S(X)), \mathbb{V})$ is a
closed subspace of $(CL(X), \mathbb{V})$. Since $S(X)$ is
compact, it follows that $(CL(S(X)), \mathbb{V})$ is a compact $D_0$-space. Then
$(CL(S(X)), \mathbb{V})$ is a $D_1$-space since every closed subset of
$(CL(S(X)), \mathbb{V})$ is compact. By Theorem~\ref{t100}, $S(X)$
is compact metrizable, hence $X$ is metrizable by \cite[Theorem 7
(2)]{DL1995}. Therefore, $(CL(X), \mathbb{V})$ is a $\gamma$-space
by Theorem ~\ref{t-g}.
\end{proof}

Since each quasi-metrizable space is a $\gamma$-space, we have the following conjecture.

\smallskip
{\bf Conjecture 1:} Let $X$ be a space. Then $(CL(X), \mathbb{V})$ is qasi-metrizable if and only if $X=C\oplus D$, where $C$ is a compact metrizable space and $D$ is a countable discrete space.

\begin{remark}
If Question~\ref{qq} is affirmative, then it is obvious that this Conjecture 1 does not hold. If Question~\ref{qq} is negative, then this Conjecture 1 holds. Indeed, assume that $(CL(X), \mathbb{V})$ is qasi-metrizable, then it follows from Theorem~\ref{t-g} that $X$ is a separable metrizable space and $S(X)$ is compact. Since $(CL(C_{\omega}), \mathbb{V})$ is not qasi-metrizable, it follows that $X$ is locally compact, which implies that $S(X)$ is open in $X$. Therefore, $X$ is the topological sum of a compact metrizable space and a countable discrete space. Moreover, from Proposition~\ref{pro} and Theorem~\ref{theo}, it follows that $(CL(X), \mathbb{V})$ is qasi-metrizable if $X$ is the topological sum of a compact metrizable space and a countable discrete space.
\end{remark}

From Theorem~\ref{theo}, we also have the following conjecture.

\smallskip
{\bf Conjecture 2:} Let $X$ be a space. Then $(CL(X), \mathbb{V})$ is qasi-metrizable if and only if $(CL(X), \mathbb{V})$ is non-archimedean qasi-metrizable.

\end{document}